\def\@currentlabel{2.1}\label{e:dispaa}
\def\@currentlabel{2.21}\label{e:dispau}
\def\@currentlabel{2.22}\label{e:dispav}
\def\@currentlabel{2.23}\label{e:dispaw}
\def\@currentlabel{2.24}\label{e:dispax}
\def\theequation{\thesection.\@arabic\c@equation}
\def\theequation{\thesection.\@arabic\c@equation}
\renewcommand{\theequation}{\thesection.\arabic{equation}}
\newtheorem{theorem}{Theorem}[section]
\newtheorem{proposition}[theorem]{Proposition}
\newtheorem{lemma}[theorem]{Lemma}
\def\e{\epsilon}
\def\m{\int_{\Bbb R}}
\def\e{\epsilon}
\def\G{{\mathfrak G}}
\def\g{\mathfrak g}
\def\R{{\mathfrak R}}
\def\begeq{\begin{equation}}
\def\endeq{\end{equation}}
\def\lf{\left}
\def\ri{\right}
\def\e{\epsilon}
\def\R{\Bbb R}
\def\e{\epsilon}
\def \i{\int_{\Bbb R}}
\def\G{\Gamma}\def\g{\gamma}
\def \t{\tilde}
\begin{document}
\title{On Ambrosetti-Malchiodi-Ni conjecture
 on  two-dimensional smooth bounded domains}%[Concentration on curves for an inhomogeneous problem]

\author{
Suting Wei
\\
School of Mathematics and Statistics, Central China Normal University,
\\
Wuhan 430079, P. R. China.
\quad
Email: stwei@mails.ccnu.edu.cn
\\
\\
Bin  Xu
 \\
School of Mathematics and  Statistics, Jiangsu  Normal University,
 \\
 Xuzhou, Jiangsu, 221116, P. R. China.
\quad
Email: dream-010@163.com
\\
\\
 Jun  Yang
\footnote{Corresponding author: Jun Yang, jyang@mail.ccnu.edu.cn}
\\
School of Mathematics and Statistics \& Hubei Key Laboratory of Mathematical
\\
Sciences, Central China Normal University,
\\
Wuhan 430079, P. R. China.
\quad
Email: jyang@mail.ccnu.edu.cn
}

\date{}
\maketitle

\begin{abstract}

\noindent  We consider the problem
 $$
 \epsilon^2 \Delta u-V(y)u+u^p\,=\,0,~~u>0~~\quad{\rm in}\quad\Omega,~~\quad\frac {\partial u}{\partial \nu}\,=\,0\quad{\rm on}~~~\partial \Omega,
 $$
 where $\Omega$~is a bounded domain in $\Bbb R^2$ with smooth boundary, the exponent $p>1$, $\epsilon>0$ is a small parameter, $V$ is a uniformly positive, smooth
 potential on $\bar{\Omega}$,  and $\nu$ denotes the outward normal of $\partial \Omega$.
 Let $\Gamma$ be a curve intersecting orthogonally with
 $\partial \Omega$ at exactly two points and dividing $\Omega$~into two parts. Moreover, $\Gamma$~satisfies {\it stationary and non-degeneracy conditions}~with respect to the functional $\int_{\Gamma}V^{\sigma}$, where $\sigma=\frac {p+1}{p-1}-\frac 12$. We prove the existence
 of a solution $u_\epsilon$~concentrating along the whole of $\Gamma$,  exponentially small in $\epsilon$~at any positive distance from it, provided that $\epsilon$ is small
 and away from certain critical numbers.
 In particular, this establishes the validity of the two dimensional case of a conjecture by A. Ambrosetti,\;A. Malchiodi and W.-M. Ni(p.327, \cite{AMNconjecture2}).
 \end{abstract}

\medskip
{\textbf{Keywords: }}{ Ambrosetti-Malchiodi-Ni conjecture, concentration sets, Fermi coordinates}

\medskip
{\textbf{MSC 2010: }}{ 35J25, 	35J61}

 \section{Introduction}
 \label{section1}

 We consider the following problem for the existence of concentration phenomena
 \begin{equation}
 \label{original equation-01}
 \epsilon^2\Delta u-V(y)u+u^p\,=\,0,\;\;\;u>0\quad{\rm in}\quad \Omega,
\qquad
\frac {\partial u}{\partial \nu}\,=\,0\quad{\rm on}\quad\partial \Omega,
 \end{equation}
where $\Omega$ is a bounded domain in $\Bbb R^{\mathbf d}$ with smooth boundary, $\epsilon>0$ is a small parameter,~$\nu$~denotes the outward normal of $\partial \Omega$ and
 the exponent $p>1$.

\medskip
\subsection{Case 1: $V\equiv 1$}
If $V\equiv 1$, problem (\ref{original equation-01}) takes the form
\begin{equation}
 \label{original equationconstant}
 \epsilon^2\Delta u-u+u^p\,=\,0,\;\;\;u>0~\quad{\rm in}\quad \Omega, ~~~~~\frac {\partial u}{\partial \nu}\,=\,0~~~\;\;{\rm on}~\;\;~\partial \Omega,
 \end{equation}
which is known as the stationary equation of Keller-Segel system in chemotaxis \cite{LNT}. It can also be viewed as a
 limiting stationary equation of Gierer-Meinhardt system in biological pattern formation \cite{GM}. Problem (\ref{original equationconstant}) has been
 studied extensively in recent years. See \cite{Ni2} for backgrounds and references.

 \medskip
 In the pioneering papers \cite{LNT}, \cite{NT}-\cite{NT1}, under the condition that $p$ is subcritical, i.e.,
 $$
 1<p<\frac {{\mathbf d}+2}{{\mathbf d}-2}
\quad\mbox{when}\quad {\mathbf d}\geq 3
\quad\mbox{and}\quad
1<p<+\infty
\quad\mbox{when}\quad {\mathbf d}=2,
$$
Lin, Ni and Takagi established,  for $\epsilon$ sufficiently small, the existence of a least-energy
 solution $U_\epsilon$ of (\ref{original equationconstant}) with only one local maximum point $P_\epsilon\in \partial \Omega$.
 Moreover,
 $$
 H(P_\epsilon)\rightarrow \displaystyle {\max_{P\in {\partial \Omega}}}H(P)
 \quad\mbox{as}\quad
 \epsilon\rightarrow 0,
 $$
 where $H(\cdot)$ is the mean curvature
 of $\partial \Omega$.  Such a solution is called a boundary spike-layer.

\medskip
Since then, many papers investigated further the solutions of (\ref{original equationconstant}) concentrating at one or multiple points of $\bar{\Omega}$.
These solutions are called spike-layers.
A general principle is that the location of interior spikes is determined by the distance function from the boundary.
 We refer the reader to the articles
 \cite{BF},
 \cite{dy2},
 \cite{dPFW1},
 \cite{GPW}-\cite{GW},
 \cite{Wei2},
 and the references therein.
 On the other hand, boundary spikes are related to the mean curvature
 of $\partial \Omega$.
 This aspect was discussed in the papers
 \cite{BDS},
 \cite{DY},
 \cite{dPFW},
 \cite{GWW},
 \cite{Li},
 \cite{Wei1},
 \cite{WeiWinter},
 and the references therein.
 The coexistence of interior and boundary spikes was due to Gui and Wei \cite{GW1}.
%For any given pair of nonnegative integers
 %$k$ and $l$,  problem (\ref{original equationconstant}) has a solution concentrating on $k$ interior
 % points and $l$ boundary points.
 A good review of the subject up to 2004 can be found in \cite{Ni2}.

\medskip
 The question of constructing higher-dimensional concentration sets has been investigated only in recent years.
 It has been conjectured in \cite{Ni1} (see also \cite{Ni2}) by W.-M. Ni that:
 {\it for any
 $1\leq \mathbf{k}\leq {\mathbf d}-1$,  problem (\ref{original equationconstant}) has a solution $U_\epsilon$ which concentrates on a $\mathbf{k}$-dimensional subset of $\bar{\Omega}$.}

 \medskip
 We mention some results for the existence of higher dimensional boundary concentration phenomena.
 In \cite{MalMon} and \cite{MalMon1}, Malchiodi and Montenegro considered (\ref{original equationconstant}) and made an initial and successful attempt to construct a solution $U_\epsilon$ with concentration at the boundary $\partial \Omega$ (or any component of $\partial \Omega$).
 In \cite{Mal} and  \cite{Mal1}, Malchiodi showed the concentration phenomena for (\ref{original equationconstant}) along a closed non-degenerate geodesic of
 $\partial \Omega$ in the three dimensional smooth bounded domain $\Omega$.
 Mahmoudi and Malchiodi  \cite{MM} proved a full general concentration of solutions along $\mathbf{k}$-dimensional ($1\leq \mathbf{k}\leq {\mathbf d}-1$) non-degenerate minimal submanifolds of the boundary for ${\mathbf d}\geq 3$ and $1<p<({\mathbf d}-\mathbf{k}+2)/({\mathbf d}-\mathbf{k}-2)$.

\medskip
On the other hand,
in order to verify the two dimensional case of the conjecture, Wei and Yang \cite{wei-yang, weiyang2} considered problem (\ref{original equationconstant}) with solutions concentrating on curves near a non-degenerate line $\Gamma'$ connecting the boundary of $\Omega$ at right angle.
%The meaning of non-degeneracy of the line $\Gamma'$ can be defined similarly as the non-degeneracy of the hypersurface $\Gamma$ in the sequel.
There are also some other results \cite{AoMussoWeiJDE, AoMussoWeiSIAM, DT, DTPA-1, DTPA-2} to exhibit concentration phenomena on interior line segments connecting the boundary of $\Omega$.
For higher dimensional extension, the reader can refer to \cite{AoYang,DancerYan,Guoyang,ligongbao}. In \cite{DancerYan}, Dancer and Yan constructed solutions of (\ref{original equationconstant}) concentrating on higher dimensional subsets inside the domain and on the boundary of the domain separately.
In \cite{ligongbao} Li, Peng and Yan showed the existence of solutions concentrating simultaneously on several higher dimensional interior and boundary manifolds.
The reader can also refer to the survey paper by Wei\;\cite{Wei}.

\medskip
\subsection{Case 2: $V{\not\equiv}$ constant}

In this case, we first concern the problem on the whole space, i.e.,
 \begin{equation}
 \label{original equation-01111}
 \epsilon^2\Delta u-V(y)u+u^p\,=\,0,\;\;\;u>0~\quad{\rm in}\quad {\mathbb R}^{\mathbf d},
 \end{equation}
where $\epsilon>0$ is a small parameter, the exponent $p>1$,
and $V$ is a smooth function with
 \begin{equation}
 \label{inf-V111}
\displaystyle {\inf_{y\in {\mathbb R}^{\mathbf d}}} V(y)>0.
\end{equation}
In the pioneering work \cite{FW}, Floer and Weinstein constructed positive solutions to this problem
when $p=3$ and ${\mathbf d}=1$ with concentration taking place near a given point ${y}_0$ with $V'({y}_0)=0,\, V''({y}_0) \neq 0$,
being exponentially small in $\epsilon$ outside any neighborhood of ${y}_0$.
This result has been subsequently extended to higher dimensions to the construction of solutions exhibiting
 concentration around one or more points of space under various assumptions on the
potential and the nonlinearity by many authors.
We refer the reader for instance to
\cite{abc, ams, cl, dl1, dl2, dl3, dl4, ft} and the references therein.

\medskip
An important question is whether solutions exhibiting concentration on higher dimensional set exists.
In \cite{AMNconjecture1}, Ambrosetti, Malchiodi and Ni  considered the case of $V=V(|y|)$ and constructed
radial solutions $u_\epsilon(|y|)$ exhibiting concentration on a sphere $|y|=r_0$ in the form
\[
u_\epsilon(r)\sim V^{\frac1{p-1}}(r_0)w\big(V^{\frac12}(r_0)\epsilon^{-1}(r-r_0)\big),
\quad
r=|y|
\]
under the assumption that $r_0 >0$ is a non-degenerate critical point of
\begin{equation}
M(r)=r^{n-1}V^\sigma(r),
\end{equation}
where  $w$ is the unique (even) solution of
\begin{equation}\label{wsolution}
w''-w+w^p=0, \ \ w > 0, \ \ w'(0)=0, \ \ w(\pm \infty)=0,
\end{equation}
and
\begin{equation}\label{sigma}
\sigma=\frac{p+1}{p-1} - \frac12.
\end{equation}
Based on heuristic arguments, in 2003  A. Ambrosetti, A. Malchiodi and W.-M. Ni  raised the following conjecture (p.465, \cite{AMNconjecture1}):
{\it Let ${\mathcal K}$ be a  non-degenerate $\mathbf{k}$-dimensional stationary manifold  of the following functional
\[
\int_{\mathcal K} V^{\frac{p+1}{p-1} -\frac{1}{2} ({\mathbf d}-\mathbf{k})}.
\]
Then there exists a solution to (\ref{original equation-01111}) concentrating near ${\mathcal K}$, at least along a subsequence $\epsilon_j \to 0$.}

\medskip
For ${\mathbf d}=2$, $\mathbf{k}=1$, del Pino, Kowalczyk and Wei \cite{dPKW}  proved the validity of this conjecture under some gap condition.
%Namely, they proved that if  ${\mathcal K}$ in $\R^2$ is  a non-degenerate, stationary curve
%for the weighted length functional $\int_{\mathcal K} V^\sigma$, then given $c > 0$ there exists %$\epsilon_0$ such
%that for all $\epsilon < \epsilon_0$ satisfying the gap condition
%\[
%|\epsilon^2j^2 - \lambda_0| \ge c\epsilon, \quad \forall \ j\in \mathbb{N},
%\]
% where $\lambda_0$ is a fixed positive constant in (\ref{lambda0}), problem (\ref{original %equation-01111})
% has a positive solution $u_\epsilon$ which will concentrate on  ${\mathcal K}$.
% Moreover, for some positive number $c_0$ independent of $\epsilon$, $u_\epsilon$ satisfies globally
% \[
% u_\epsilon(\tilde{y}) \le \exp\big(-c_0\epsilon^{-1}\mbox{dist}(\tilde{y}, {\mathcal K})\big).
% \]
Recently, Mahmoudi, Malchiodi and  Montenegro \cite{mmm} established the validity of the above conjecture in the case of ${\mathbf d}=3, \mathbf{k}=1$. They also considered the complex solutions of (\ref{original equation-01111}) carrying momentums.
For the concentration phenomena directed along general manifolds of codimension one, see \cite{WangWeiYang}.
For a more general problem with suitable hypotheses for the potentials, Bartsch and Peng constructed spike layered solution which concentrates on certain ${\mathbf d}-1$ dimensional spheres in \cite{bartschpeng-1,bartschpeng-2}.

\medskip
Let us go back to the problem on smooth bounded domain with homogeneous Neumann boundary condition,
i.e., problem (\ref{original equation-01}) with the assumption that $V$ is a smooth function and
 \begin{equation}
 \label{inf-V}
\displaystyle {\inf_{y\in \bar{\Omega}}} V(y)>0.
\end{equation}
Ambrosetti, Malchioci and Ni \cite{AMNconjecture2} imposed the following hypotheses on the function $V:$
 \\
 {\bf (V1).} $V\in C^1(\Bbb R^{+},\Bbb R)$,
 \\
 {\bf (V2).} $V$ is bounded and satisfies (\ref{inf-V}),\\
and introduced the auxiliary potential
\begin{equation}
\label{potential}
M(r)\,=\,r^{{\mathbf d}-1}V^{\sigma}(r),~~~{\rm where}~\sigma\,=\,\frac {p+1}{p-1}-\frac 12.
\end{equation}
By taking advantage of the boundary effect, they showed the existence of interior concentration for radial symmetric solutions to (\ref{original equation-01}).
One of their results is as follows:
%\begin{theorem}
%\label{citetheorem}\cite{AMNconjecture2}
{\it Let $\Omega\subseteq \Bbb R^{\mathbf d}$ be the ball $B_1$ and $p>1$.
Suppose that the function $M$ satisfies the condition
\begin{equation}
\label{Mcondition}
M'(1)>0.
\end{equation}
Then there exists a family of radial solutions $u_\epsilon$ of (\ref{original equation-01}) concentrating on $|y|=r_{\epsilon}$,  where $r_\epsilon$ is a local maximum for $u_\epsilon$
such that $1-r_\epsilon\sim \epsilon|\log \epsilon|$.}
To the best knowledge of the authors, for (\ref{original equation-01}), there are no results on the interior non-radial and symmetric concentration phenomena(without intersection between $\partial\Omega$ and the concentration set).

\medskip
For problem (\ref{original equation-01}), another natural case is the existence of interior concentration phenomena intersecting the boundary of $\Omega$.
This is the conjecture also by A. Ambrosetti, A. Malchiodi and W.-M. Ni in 2004 (p.327, \cite{AMNconjecture2}), which can be stated as:
{\it Let ${\mathcal K}$ be a $\mathbf{k}$-dimensional manifold intersecting $\partial\Omega$ perpendicularly, which is also stationary and non-degenerate with respect to the following functional
\[
\int_{\mathcal K} V^{\frac{p+1}{p-1} -\frac{1}{2} ({\mathbf d}-\mathbf{k})}.
\]
Then there exists a solution to (\ref{original equation-01}) concentrating near ${\mathcal K}$, at least along a subsequence $\epsilon_j \to 0$.}

\subsection{Main results}

In the present paper, to avoid complicated computation, we will consider (\ref{original equation-01}) and provide an affirmative answer to the last-mentioned conjecture(called {\bf {Ambrosetti-Malchiodi-Ni conjecture}}) only in the case: ${\mathbf d}=2$ and $\mathbf{k}=1$.
It should be mentioned that the potential term $V$ and  the boundary of $\Omega$ will play an important role in the sense that they basically determine the location of the concentration set and also bring much more difficulties in our procedure of the constructing solutions due to the intersection between $\partial\Omega$ and the concentration set.
The new ingredient is to set up a new local  coordinate system  such that we can decompose the interaction among the interior concentration layers, the boundary of $\Omega$ and the potential term $V$.

\medskip
More precisely, in the present paper, for the existence of a solution $u_{\epsilon}$ concentrating along a curve, say $\Gamma$,
we make the following assumptions on $\Gamma$:

 {\bf (A1).} {\it $\Gamma$ is a curve intersecting $\partial \Omega$ at exactly two points, saying $P_1, P_2$,  and, at these
 points $\Gamma\bot \partial \Omega$.
In the small neighborhoods of $P_1,~P_2$, the boundary $\partial \Omega$ are two curves, say $\mathcal{C}_1$ and ${\mathcal C}_2$, which can be represented
 by the graphs of two functions respectively
 $$
 {y}_2\,=\,\varphi_1({y}_1)\quad\mbox{with }\ (0,\varphi_1(0))\,=\,P_1,
 $$
 $$
 {y}_2\,=\,\varphi_2({y}_1)\quad\mbox{with }\ (0,\varphi_2(0))\,=\,P_2.
 $$
 Moreover, after rescaling, we can always assume that $|\Gamma|=1$ and then denote $k$ the curvature of $\Gamma$, also $k_1$, $k_2$ the signed curvatures of ${\mathcal C}_1$ at $P_1$ and ${\mathcal C}_2$ at $P_2$. }

 {\bf (A2).} {\it The curve $\Gamma$ is a  non-degenerate geodesic relative to the weighted arc length $\int_{\Gamma}V^{\sigma}$, where
 $\sigma$ is defined in (\ref{potential}).
 This will be clarified in the next section, see (\ref{stationary}) and (\ref{nondegeneracy}).
 }

 \medskip
%Let $w$ denote the unique positive solution of problem
% \begin{equation}
% \label{wsolution}
% w{''}-w+w^{p}\,=\,0,\quad w>0,\quad w{'}(0)\,=\,0,\quad w(\pm \infty)\,=\,0.
% \end{equation}
By recalling $w$ given in (\ref{wsolution}),
we consider the associated linearized eigenvalue problem,
\begin{equation}
\label{eigenvalue}
h{''}-h+pw^{p-1}h\,=\,\lambda h\quad {\rm in}~\Bbb R,~\quad h(\pm \infty)\,=\,0.
\end{equation}
It is well known that this equation possesses a unique positive eigenvalue $\lambda_0$ in $H^1(\Bbb R)$,
with associated eigenfunction $Z$ (even and positive) which can be normalized so that $\m Z^2=1$.
In fact, a simple computation shows that
\begin{equation}
\label{lambda0}
\lambda_0\,=\,\frac 14(p-1)(p+3),\quad Z\,=\,\frac {1}{\sqrt{\displaystyle\int_{\mathbb R} w^{p+1}}\,}w^{\frac {p+1}{2}}.
\end{equation}
By setting
 \begin{equation}
\label{definenumber}
\lambda_{*}\,=\,\frac {\lambda_0 \ell^2}{\pi^2}\quad\mbox{with}\quad \ell=\int_\Gamma V^{\frac 12},
\end{equation}
we can formulate our main result.

\begin{theorem}
\label{theorem 1.1}
Let ${\mathbf d}=2$ and $\Gamma$ be a non-degenerate, stationary curve for the weighted length functional $\int_{\Gamma}V^{\sigma}$,  as described above. For given $c>0$ there exists $\epsilon_0>0$
 such that for all $\epsilon<\epsilon_0$ satisfying the gap condition
 \begin{equation}
 \label{gapcondition}
 |\epsilon^2j^2-\lambda_{*}|\geq c\epsilon,~~\forall ~j\in\Bbb N,
 \end{equation}
problem (\ref{original equation-01}) has a positive solution $u_{\epsilon}$, which, for $y$ given by (\ref{Fermicoordinates}) in the neighborhood of $\Gamma$, takes the form
 \begin{equation}
 \label{taketheform}
 u_{\epsilon}(y)\,=\,V(0,\theta)^{\frac {1}{p-1}}w\Big(V(0,\theta)^{\frac 12}\frac t \epsilon\Big)(1+o(1)).
 \end{equation}
For some number $c_0>0,~u_{\epsilon}$~satisfies globally
 $$u_{\epsilon}(y)\,\leq\, \exp \big(-c_0\epsilon^{-1}{\mathrm{dist}} (y,\Gamma)\big).$$
 \qed
\end{theorem}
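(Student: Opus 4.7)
The proof follows the infinite-dimensional Lyapunov--Schmidt reduction scheme developed by del Pino--Kowalczyk--Wei for the analogous problem on $\mathbb{R}^2$, combined with a careful treatment of the boundary intersection. First I would introduce Fermi coordinates $(t,\theta)$ in a tubular neighborhood of $\Gamma$, with $\theta\in[0,1]$ arc-length along $\Gamma$ and $t$ the signed normal distance, so that the Laplacian takes the form $\partial_t^2 + \partial_\theta^2 + $ curvature corrections. Near $P_1, P_2$ these coordinates become singular at the boundary; the crucial new ingredient is a modified coordinate system in which the boundary pieces $\mathcal{C}_1,\mathcal{C}_2$ (given by the graphs $\varphi_i$) become straightened so that the Neumann condition reduces, to leading order, to $\partial_\theta u=0$ at $\theta=0,1$, with perturbations controlled by the signed curvatures $k_1,k_2$ and by $\varphi_i''$.

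Next I would construct a global approximation
\[
U_\e(t,\theta)\,=\,V(0,\theta)^{\frac{1}{p-1}}\,w\!\left(V(0,\theta)^{\frac12}\frac{t-\epsilon h(\theta)}{\epsilon}\right)+\sum_{j\ge 1}\epsilon^j\phi_j,
\]
where $h(\theta)$ is an unknown small normal displacement of $\Gamma$ and the correctors $\phi_j(t,\theta)$ solve ODEs in $t$ on $\mathbb{R}$ with orthogonality to $w'$ imposed at each step. Matching the expansion up to order $\epsilon^2$ produces, in the $w'$-direction, exactly the Euler--Lagrange equation of the weighted length $\int_\Gamma V^\sigma$; this is what singles out $\Gamma$ via the stationary condition in (A2). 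The choice of matching corrections at the endpoints $\theta=0,1$ — using the straightened boundary coordinates — guarantees the Neumann condition for $U_\e$ modulo errors of size $\epsilon^N$.

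Writing $u=U_\e+\psi$ and linearizing, one obtains the operator $\mathcal{L}_\e\psi = -\epsilon^2\Delta\psi+V\psi - pU_\e^{p-1}\psi$ whose transverse kernel is spanned by $\partial_t w$ and whose unique positive eigenmode is $Z$ from (\ref{lambda0}). Decomposing $\psi(t,\theta)=\phi^\perp(t,\theta) + \tau(\theta)Z\!\big(V^{1/2}t/\epsilon\big)$ with $\phi^\perp$ $L^2$-orthogonal to both $Z$ and $w'$ in the $t$-variable, the $\phi^\perp$-component is invertible by standard weighted estimates. Projection onto $Z$ yields an effective one-dimensional operator
\[
\epsilon^2\tau'' + \lambda_*\tau + \text{l.o.t.},
\]
whose spectrum clusters near $\lambda_*-\epsilon^2 j^2$; the gap condition (\ref{gapcondition}) is precisely what makes this invertible with norm of order $\epsilon^{-1}$. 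A contraction mapping argument then solves the nonlinear equation modulo the $w'$-direction. Finally, projecting the residual onto $w'$ yields a Jacobi-type ODE for $h(\theta)$ whose linearization is the second variation of $\int_\Gamma V^\sigma$, with Neumann boundary conditions at $\theta=0,1$ arising from the orthogonality of $\Gamma$ to $\partial\Omega$; this is invertible by the non-degeneracy part of (A2), which determines $h$ and closes the scheme. The form (\ref{taketheform}) and the global decay estimate then follow from the ansatz and barriers built from shifted copies of $w$.

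The main obstacle is the boundary intersection at $P_1,P_2$. Fermi coordinates degenerate there, and one must keep the Neumann condition satisfied at \emph{every} order of the expansion while simultaneously keeping the correctors $\phi_j$ exponentially decaying in $t$. This forces the ad hoc coordinate change that decouples the soliton profile, the boundary straightening (via $\varphi_1,\varphi_2$ and their curvatures $k_1,k_2$), and the potential $V$, so that the one-dimensional reduced problems for the $\phi_j$ and for $h$ inherit clean Neumann conditions at $\theta=0,1$. A secondary delicate point is the loss of $\epsilon^{-1}$ in the resolvent estimate coming from the $Z$-mode resonance; this forces the construction of the approximation to be pushed to sufficiently high order so that the nonlinear error beats this loss in the final fixed-point argument.
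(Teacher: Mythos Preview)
Your overall architecture matches the paper's: modified Fermi coordinates that straighten $\partial\Omega$ near $P_1,P_2$, a del Pino--Kowalczyk--Wei style ansatz with an unknown normal displacement, a linear projected problem on the strip with orthogonality to $w_x$ and $Z$, and a final reduction to a coupled ODE system for the displacement and the $Z$-coefficient, solved under the gap condition. The identification of the $Z$-mode as the source of resonance and of the gap condition as the cure is exactly right.

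There is, however, one genuine error. You assert that the reduced Jacobi-type equation for the displacement carries \emph{Neumann} conditions at $\theta=0,1$. In fact the correct conditions are \emph{Robin}:
\[
f'(0)+k_1 f(0)=0,\qquad f'(1)+k_2 f(1)=0,
\]
with $k_1,k_2$ the signed curvatures of $\partial\Omega$ at $P_1,P_2$. The reason is that after straightening, the Neumann operator on $\partial\Omega_\epsilon$ acquires the form $-\partial_z+k_i\,\epsilon s\,\partial_s+\cdots$; applying this to the ansatz and projecting the boundary residual onto $w_x$ produces the leading term $\epsilon\beta(k_i f+f')\int w_x^2$, which forces the Robin condition. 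The non-degeneracy hypothesis (A2) in the paper is formulated for precisely this Robin boundary value problem, so if you close the reduction with Neumann conditions you will be invoking an assumption you do not have (unless $k_1=k_2=0$). The same mechanism yields Robin conditions for the $Z$-coefficient equation as well.

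Two smaller structural differences worth noting. First, the paper builds the $Z$-coefficient $e(\theta)$ (your $\tau$) directly into the \emph{approximate solution}, together with two displacement parameters: an auxiliary $h$ determined by an explicit ODE that kills part of the $O(\epsilon^2)$ error, and the genuine reduction parameter $f$. Your placement of $\tau$ inside the perturbation $\psi$ is an equivalent bookkeeping, but the paper's choice makes the orthogonality conditions on $\phi$ cleaner. Second, the reduced equations for $f$ and $e$ are genuinely coupled (the $f$-equation contains $\hbar_3 e+\epsilon^2\hbar_4 e''$), so one cannot literally ``solve $\tau$ first, then $h$''; the paper handles this by observing the linearized system is upper-triangular and applying a joint fixed-point argument in $(f,e)$.
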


Here are some words for further discussion.
We do not prove the result for all small values $\epsilon>0$ but only for those which lie away from certain critical numbers due to the resonance phenomena.
 In fact, the gap condition (\ref{gapcondition}) will be useful in dealing with the
 resonance phenomenon, which was described in Section 1 of \cite{dPKW}.
%Note that (\ref{gapcondition}) will be the case whenever $\epsilon$
% is small and away from the critical numbers $\frac {\sqrt{\lambda_{*}}}{k}, k\in \Bbb N $,  in the sense %that for fixed and arbitrarily small $c<\sqrt{\lambda_{*}}$,
% $$
% \epsilon \notin \Big[\frac {\sqrt{\lambda_{*}}}{k}-\frac {c}{k^2},\,
% \frac {\sqrt{\lambda_{*}}}{k}+\frac {c}{k^2}\Big],\quad {\rm for\, all}\quad k\in \Bbb N.
% $$

\medskip

For the convenience of expression in the procedure to show the validity of Theorem \ref{theorem 1.1}, by the rescaling
\begin{align}\label{rescaling}
y\,=\,\epsilon {\tilde y}
\end{align}
 in $\Bbb R^2$,
 (\ref{original equation-01}) will be rewritten as
\begin{equation}
\label{problemafterscaling}
\Delta u_{\tilde y}-V(\epsilon \tilde y)u+u^p\,=\,0\quad{\rm in }\ \Omega_\epsilon,
\qquad
\frac {\partial u}{\partial \nu_\e}\,=\,0\quad{\rm on }\ \partial \Omega_\epsilon,
\end{equation}
where $\Omega_\epsilon=\Omega/\epsilon$ and then also $\Gamma_\epsilon=\Gamma/\epsilon$.
In order to decompose the interaction among concentration layers, the potential $V$
and the boundary $\partial\Omega_\epsilon$, and then construct good approximations to real solutions,
we will first set up a local coordinate system to present $y$ and then ${\tilde y}$ by the relation (\ref{rescaling}), called {\bf modified Fermi coordinates}, in Section \ref{section2}.
This local coordinate system helps us set up the stationary and non-degeneracy conditions for $\Gamma$,
see (\ref{stationary}) and (\ref{nondegeneracy}).

\medskip
An outline of proof for Theorem \ref{theorem 1.1} will be given in Section \ref{section3}.
Gluing procedure from \cite {dPKW}~will be applied to transform \eqref{problemafterscaling} into a projected form of a local problem,
see \eqref{projectedproblem1}-\eqref{projectedproblem4} in Section \ref{section3},
which  will be solved in Sections \ref{theinvertibility} and \ref{solvingthenonlinearprojectionproblem}.
Note that the resolution theory of  \eqref{projectedproblem1}-\eqref{projectedproblem4}  relies heavily
on the properties of the local approximate solution (cf. (\ref{basic approximate})),
which will be constructed in Section {\ref{section4}} step by step in such
a way that it solves the nonlinear problem  locally up to order $O(\epsilon^2)$.

\medskip
To get a real solution, the well-known infinite dimensional reduction method \cite{dPKW} will be needed in Sections \ref{section6}-\ref{section7}. In fact, the reduced problem inherits the resonance phenomena and will be handled by complicated Fourier analysis with the help of the gap condition in (\ref{gapcondition}).
Some tedious computation will be given in Appendices \ref{appendixA} and \ref{appendixB}.

\section{Geometric preliminaries}
\label{section2}
\setcounter{equation}{0}

\noindent In this section, inspired by \cite{ksakamoto} we will set up a local coordinate system in the neighborhood of $\Gamma$, called modified Fermi coordinates,
and then derive its asymptotic expansion.
The notion of stationary and non-degenerate curve $\Gamma$  will be also derived in the last part of this section.

\subsection{Modified Fermi Coordinates}\label{section2.1}

Recall the assumptions  {\bf (A1)} and {\bf (A2)} in Section \ref{section1} and the notation therein.
For basic notions of curves, such as the signed curvature, the reader can refer to the book by do Carmo \cite{docarmo}. The local coordinate system can be given as the following:\\
\noindent {\bf Step  1.}
Let the natural parameterization of the curve $\G$ be as follows:
$$
\g_0:[0,1]\rightarrow \G \subset \bar\Omega\subset \R^2.
$$
We can extend $\gamma_0$ slightly in a smooth way, i.e., for some small positive numbers $\sigma_0$, and define the mapping
$$
\g:(-\sigma_0,1+\sigma_0)\rightarrow \R^2,
$$
such that
$$\g(\t {\theta})\,=\,\g_0(\t{\theta}),~~\forall\, \t{\theta}\in [0,1].$$
There holds the Frenet formula
$$
\gamma''\,=\,kn,\quad n'\,=\,-k\gamma',
$$
where $k$, $n$ are the curvature and the normal of $\gamma$.
Choosing $\delta_0>0$~very small, and setting
$$
{\mathfrak S}_1\,\equiv\,(-\delta_0,\delta_0)\times (-\sigma_0,1+\sigma_0),
$$
we construct the following mapping
\begin{align*}
{\mathbb H}: {\mathfrak S}_1\rightarrow {\mathbb H}({\mathfrak S}_1)\,\equiv\,\Omega_{\delta_0,\sigma_0}
\quad
\mbox{with}
\quad
\mathbb{H}(\t {t},\t {\theta})\,=\,\gamma(\t{\theta})+\t{t}\,n(\t{\theta}).
\end{align*}
Note that ${\mathbb H}$ is a diffeomorphism (locally) and ${\mathbb H}(0,\t{\theta})=\g(\t{\theta})$.

\medskip
\noindent {\bf Step 2}.
Denote the preimage
$$
{\tilde{\mathcal C}}_1\,\equiv\, {\mathbb H}^{-1}\lf(\mathcal C_1\ri),
\qquad
{\tilde{\mathcal C}}_2\,\equiv\, {\mathbb H}^{-1}\lf(\mathcal C_2\ri),
$$
which can be parameterized respectively by $\lf(\t{t}, {\tilde \varphi}_1(\t{t})\ri)$ and  $ \lf(\t{t}, {\tilde \varphi}_2(\t{t})\ri)$ for some smooth functions ${\tilde \varphi}_1(\t{t}),\, {\tilde \varphi}_2(\t{t})$,
and then define a  mapping
$$
\t{{\mathbb H}}:\, {\mathfrak S}_1\rightarrow {\mathfrak S}_2\,\equiv\,\t{{\mathbb H}}({\mathfrak S}_1)\subset \Bbb R^2,
$$
in the form
$$   t=\t{t},\quad \theta=\frac {\t{\theta}-{\tilde \varphi}_1(\t{t})}{{\tilde \varphi}_2(\t{t})-{\tilde \varphi}_1(\t{t})}. $$
This transformation will straighten up the curves ${\tilde{\mathcal C}}_1$ and ${\tilde{\mathcal C}}_2$.

\medskip
It is obvious that
\begin{align}\label{fact1}
{\tilde {\mathbb H}}^{-1}(0,\theta)\,=\,(0,{\theta}),\ \theta\in [0,1].
\end{align}
% \noindent {\it Proof. }
%Note that $${\tilde \varphi}_1(0)\,=\,0,~{\tilde \varphi}_2(0)-{\tilde \varphi}_1(0)=1$, $~it is easy to obtain the conclusion.
Moreover, we have
\begin{lemma}\label{lemma2.1}
There hold
\begin{align}\label{fact2}
\t\varphi_1'(0)=0,\quad \t\varphi_2'(0)=0.
\end{align}
% $\frac {\partial {\mathbb H}^{-1}(0,\theta)}{\partial t}=0$.
%We can let
%$${\tilde k}_1\,=\,|{\tilde \varphi}''_1(0)|,
%\quad
%{\tilde k}_2\,=\,|{\tilde \varphi}_2''(0)|,
%$$
%be the curvatures of the curves ${\tilde{\mathcal C}}_1$ and ${\tilde{\mathcal C}}_2$ at the points of $\t %t=0$ respectively.
Then, there exist the relations
\begin{align}\label{fact3}
k_1={\tilde \varphi}''_1(0),
\qquad
k_2={\tilde \varphi}''_2(0).
\end{align}

\end{lemma}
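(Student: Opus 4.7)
The plan is to exploit the explicit form of $\mathbb{H}$ together with the Frenet identities $\gamma''=kn$ and $n'=-k\gamma'$ in order to read off $\tilde\varphi_i'(0)$ and $\tilde\varphi_i''(0)$ from the first- and second-order expansions of $\mathcal{C}_i$ near $P_i$. Both claims reduce to a direct Taylor computation; the only genuinely conceptual ingredient is the hypothesis $\Gamma\perp\partial\Omega$ at $P_1,P_2$ from \textbf{(A1)}.

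For (\ref{fact2}), I would first compute the Jacobian of $\mathbb{H}$ at $(0,0)$. Since
\[
\partial_{\tilde t}\mathbb{H}(\tilde t,\tilde\theta)=n(\tilde\theta),
\qquad
\partial_{\tilde\theta}\mathbb{H}(\tilde t,\tilde\theta)=\bigl(1-\tilde t\,k(\tilde\theta)\bigr)\gamma'(\tilde\theta),
\]
evaluation at $(0,0)$ gives the frame $\{n(0),\gamma'(0)\}$. The curve $\tilde{\mathcal C}_1$ is parameterized by $\tilde t\mapsto(\tilde t,\tilde\varphi_1(\tilde t))$ with tangent $(1,\tilde\varphi_1'(0))$ at the origin, so its push-forward under $d\mathbb{H}(0,0)$ gives
\[
T_{P_1}\mathcal{C}_1 \,=\, n(0)+\tilde\varphi_1'(0)\,\gamma'(0).
\]
The orthogonality condition at $P_1$ reads $T_{P_1}\mathcal{C}_1\perp\gamma'(0)$; since $n(0)\cdot\gamma'(0)=0$ and $|\gamma'(0)|=1$, this forces $\tilde\varphi_1'(0)=0$. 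The identical argument at $P_2=\gamma(1)$ yields $\tilde\varphi_2'(0)=0$.

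For (\ref{fact3}), I would parameterize $\mathcal{C}_1$ as $c(s) = \gamma(\tilde\varphi_1(s)) + s\,n(\tilde\varphi_1(s))$, differentiate twice, and substitute the Frenet relations together with $\tilde\varphi_1(0)=\tilde\varphi_1'(0)=0$ obtained in the previous step. A short calculation yields $c'(0)=n(0)$ (so $s$ is arc length to first order at $P_1$) and
\[
c''(0)\,=\,\gamma''(0)\bigl(\tilde\varphi_1'(0)\bigr)^2+\gamma'(0)\tilde\varphi_1''(0)+2n'(0)\tilde\varphi_1'(0)\,=\,\tilde\varphi_1''(0)\,\gamma'(0).
\]
The signed curvature of $\mathcal{C}_1$ at $P_1$ is then $\det\bigl(c'(0),c''(0)\bigr)=\tilde\varphi_1''(0)\det\bigl(n(0),\gamma'(0)\bigr)$, which equals $\tilde\varphi_1''(0)$, hence $k_1=\tilde\varphi_1''(0)$; the same computation at $P_2$ gives $k_2=\tilde\varphi_2''(0)$.

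The only delicate issue is the bookkeeping of orientations: the normal $n$ is fixed by $\gamma''=kn$, and the signed curvatures $k_1, k_2$ must be taken with the convention that a positive value corresponds to $\partial\Omega$ bending toward $n$ at the respective endpoint; with this convention one has $\det(n(0),\gamma'(0))=+1$ and the identities in (\ref{fact3}) drop out. No analytic difficulty is present beyond matching these sign conventions.
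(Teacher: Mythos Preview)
Your proposal is correct and follows essentially the same route as the paper's proof in Appendix~\ref{appendixA}: the paper also derives (\ref{fact2}) by pairing the tangent of $\mathcal{C}_1$ at $P_1$ with $\gamma'$ and invoking $\Gamma\perp\partial\Omega$, and derives (\ref{fact3}) by differentiating the parameterization $\tilde t\mapsto\gamma(\tilde\varphi_1(\tilde t))+\tilde t\,n(\tilde\varphi_1(\tilde t))$ twice and plugging into the signed-curvature formula. The only cosmetic difference is that the paper writes everything in explicit $(y_1,y_2)$ coordinates and uses the scalar formula $k_1=(y_1'y_2''-y_1''y_2')/((y_1')^2+(y_2')^2)^{3/2}$, whereas you phrase the same computation via the pushforward $d\mathbb{H}(0,0)$ and the determinant $\det(c'(0),c''(0))$; your remark that the orientation convention $\det(n,\gamma')=+1$ is the only point requiring care is exactly what the paper handles by fixing $\gamma_1'n_2-\gamma_2'n_1=+1$ through the choice of arc-length orientation.
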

The proof of this lemma will be given in Appendix \ref{appendixA}.
\qed

\medskip
\medskip
\noindent {\bf Step 3}.
We define the {\bf modified Fermi coordinates}
\begin{equation}
\label{Fermicoordinates}
y\,=\,F(t,\theta)\,=\,{\mathbb H}\circ{\tilde {\mathbb H}}^{-1}(t,\theta)\,:\, (-\delta_0,\delta_0)\times(-\sigma_0,1+\sigma_0)\rightarrow{\mathbb R}^2
\end{equation}
for the given small positive constants $\sigma_0$ and $\delta_0$.
More precisely,
\begin{align*}
\begin{aligned}
F(t,\theta)&\,=\,{\mathbb H}\Big(t, \,\, \big({\tilde \varphi}_2(t)-{\tilde \varphi}_1(t)\big)\theta+{\tilde \varphi}_1(t)\Big)
\\
&\,=\,\g\Big(\big({\tilde \varphi}_2(t)-{\tilde \varphi}_1(t)\big)\theta+{\tilde \varphi}_1(t)\Big)
\,+\,
t\,n\Big(\big({\tilde \varphi}_2(t)-{\tilde \varphi}_1(t)\big)\theta+{\tilde \varphi}_1(t)\Big).
\end{aligned}
\end{align*}

For convenience's sake, in the following, we also denote
$$
\Theta(t, \theta)
\,\equiv\, \big({\tilde \varphi}_2(t)-{\tilde \varphi}_1(t)\big)\theta
\,+\,{\tilde \varphi}_1(t),
$$
and so that
\begin{align}\label{Fformula}
F(t,\theta)\,=\,\gamma\big(\Theta(t,\theta)\big)\,+\,t\,n\big(\Theta(t,\theta)\big).
\end{align}
Note also that
\begin{align}
\begin{aligned}\label{alpha}
\Theta(0,\theta)\,=\,\theta,
\qquad
\Theta_t(0,\theta)\,=\,0,
\qquad
\Theta_\theta(0,\theta)\,=\,1,\quad\
\\
\Theta_{tt}(0,\theta)\,=\,(k_2-k_1)\theta+k_1,
\qquad
\Theta_{tt\theta}(0,\theta)\,=\,k_2-k_1,
\end{aligned}
\end{align}
due to \eqref{fact2} and (\ref{fact3}).
These quantities will play an important role in the further settings.

\medskip
The asymptotic expressions of this coordinate system
will be given by the following basic facts.

\begin{lemma}\label{derivativeofF}
The mapping $F$ in \eqref{Fformula} has the following properties:
\\

\noindent {\rm\textbf{(1).}} $\,\,F(0,\theta)\,=\,\g(\theta)$,
\\

\noindent {\rm\textbf{(2).}} $\,\,\frac{{\partial} F}{{\partial} t}(0,\theta)\,=\,n(\theta)$,
\\

\noindent {\rm\textbf{(3).}} $\,\,q_1(\theta)\,\equiv\,\frac{{\partial}^2 F}{{\partial} t^2}(0,\theta)\,=\,\gamma{\,'}(\theta)\cdot \Theta_{tt}(0,\theta) \quad \bot\, n(\theta)$,
\\

\noindent {\rm\textbf{(4).}} $\,\,q_2(\theta)\,\equiv\, \frac{{\partial}^3F}{{\partial} t^3}(0,\theta)\,=\,\gamma{\,'}(\theta)\cdot \Theta_{ttt}(0,\theta)+3n{'}(\theta)\cdot \Theta_{tt}(0,\theta)
\quad\bot\, n(\theta)$.
\end{lemma}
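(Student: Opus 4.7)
The plan is to just grind out chain-rule derivatives on the formula $F(t,\theta)=\gamma\big(\Theta(t,\theta)\big)+t\,n\big(\Theta(t,\theta)\big)$, evaluate at $t=0$, and use the two key simplifications coming from (\ref{alpha}): $\Theta(0,\theta)=\theta$ and, crucially, $\Theta_t(0,\theta)=0$. The vanishing of $\Theta_t$ at $t=0$ is what kills all the ``ugly'' cross terms produced by successive differentiation, leaving only the clean expressions in the statement.

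For \textbf{(1)}, I just substitute $t=0$ and use $\Theta(0,\theta)=\theta$ to get $F(0,\theta)=\gamma(\theta)+0=\gamma(\theta)$. For \textbf{(2)}, differentiating once gives
\[
F_t=\gamma'(\Theta)\,\Theta_t+n(\Theta)+t\,n'(\Theta)\,\Theta_t,
\]
and at $t=0$ the first and third terms vanish because $\Theta_t(0,\theta)=0$ and $t=0$, leaving $n(\theta)$. For \textbf{(3)}, I differentiate once more and observe that every term in $F_{tt}$ carries either a factor $\Theta_t$ or an explicit factor $t$ \emph{except} for the single term $\gamma'(\Theta)\,\Theta_{tt}$; at $t=0$ this is exactly $\gamma'(\theta)\,\Theta_{tt}(0,\theta)$. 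Since $\gamma'(\theta)$ is the unit tangent, it is orthogonal to the unit normal $n(\theta)$, so $q_1\perp n$.

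For \textbf{(4)}, I differentiate $F_{tt}$ once more. The bookkeeping is the only tedious part, but again the surviving terms at $t=0$ are precisely those without any remaining factor of $\Theta_t$ or $t$. Reading off the derivative term by term from
\[
F_{tt}=\gamma''(\Theta)\,\Theta_t^{2}+\gamma'(\Theta)\,\Theta_{tt}+2n'(\Theta)\,\Theta_t+t\,n''(\Theta)\,\Theta_t^{2}+t\,n'(\Theta)\,\Theta_{tt},
\]
the only terms that do not vanish at $t=0$ come from differentiating $\gamma'(\Theta)\Theta_{tt}$ (producing $\gamma'(\theta)\,\Theta_{ttt}(0,\theta)$), $2n'(\Theta)\Theta_t$ (producing $2n'(\theta)\,\Theta_{tt}(0,\theta)$), and $t\,n'(\Theta)\Theta_{tt}$ (producing $n'(\theta)\,\Theta_{tt}(0,\theta)$ from the $\partial_t t$ factor). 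Summing yields the claimed formula.

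Finally, orthogonality of $q_2$ to $n(\theta)$ follows immediately from the Frenet identity $n'=-k\gamma'$: both $\gamma'(\theta)$ and $n'(\theta)$ are scalar multiples of the unit tangent, hence orthogonal to $n(\theta)$. There is no genuine obstacle here; the only thing one has to be careful with is cleanly organizing the product and chain rules so as not to miss any of the three contributions to the coefficient $3$ of $n'(\theta)\Theta_{tt}(0,\theta)$ in $q_2$.
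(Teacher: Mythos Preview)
Your proof is correct and follows essentially the same approach as the paper's own proof in Appendix~\ref{appendixA}: both compute successive $t$-derivatives of $F(t,\theta)=\gamma(\Theta)+t\,n(\Theta)$ by the chain and product rules, evaluate at $t=0$, and use $\Theta(0,\theta)=\theta$, $\Theta_t(0,\theta)=0$ from (\ref{alpha}) to kill all but the stated terms. Your explicit invocation of the Frenet relation $n'=-k\gamma'$ to justify $q_2\perp n(\theta)$ is a nice touch that the paper leaves implicit.
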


The proof of this lemma will be given in Appendix \ref{appendixA}.
\qed

\subsection{Stationary and non-degenerate curves}

Finally, for a curve $\Gamma$ connecting the boundary $\partial\Omega$,
 we will make precise the concept of being non-degenerate  geodesic for the metric
 ${\mathrm d}s^2\,=\,V^{\sigma}({\mathrm d}{y}_1^2+{\mathrm d}{y}_2^2)$
 where $\sigma$ is given in (\ref{potential}).
Consider the deformation of $\Gamma$ along the normal with end points staying on $\partial \Omega$,  i.e.,
\begin{equation}\label{non-degeneracy}
\Gamma_{\mathfrak{g}}\,:\, \gamma\big(\Theta({\mathfrak{g}}(\theta),\theta)\big) \,+\,{\mathfrak{g}}(\theta)\,n\big(\Theta({\mathfrak{g}}(\theta),\theta)\big).
\end{equation}
Taking the functional defined on ${\mathfrak{g}}$ as
\begin{equation*}\begin{split}
{\mathcal J}({\mathfrak{g}})&\,=\,\int_0^1V^{\sigma}\big(\Gamma_{{\mathfrak{g}}}(\theta)\big)\big |\Gamma'_{{\mathfrak{g}}}(\theta)\big|\,{\rm d}\theta
\\
&\,=\,\int_0^1V^{\sigma}\Big [(1-k{\mathfrak{g}})^2\big(\Theta_t{\mathfrak{g}}{'}+\Theta_{\theta}\big)^2+({\mathfrak{g}}{'})^2\Big]^{\frac 12}\,{\rm d}\theta,
\end{split}\end{equation*}
where $\Theta={\Theta}\big({\mathfrak{g}}(\theta),\theta\big)$.

\medskip
We write $V({y})=V(t,\theta)$ in the modified Fermi coordinates by setting
 \begin{equation}\label{functional}
 t\,=\,{\mathfrak{g}}(\theta),
 \qquad
 V({y})|_{\Gamma_{{\mathfrak{g}}}}\,=\,V\big({\mathfrak{g}}(\theta),\theta\big).
 \end{equation}
The curve  $\Gamma$ is said to be stationary for the weighted length $\int_{\Gamma_{{\mathfrak{g}}}}V^{\sigma}$ if the first variation of
${\mathcal{J}}$ at ${\mathfrak{g}}=0$ is equal to zero.
That is, for any smooth function $h(\theta)$ defined at $[0,1]$

 \begin{equation*}
 \begin{split}
 0\,=\,{\mathcal{J}}'(0)[h]\,=\,&\int_0^1V^{\sigma}(0,\theta)\Theta_{\theta}^{-1}
 \Big[
 (-k h)\Theta_{\theta}^2\,+\,\Theta_{\theta}(\Theta_t h'+\Theta_{\theta t}h)
 \Big]\,{\rm d}\theta
 \\
 &+\int_{0}^1 \big (V^{\sigma}\big)_t\cdot h\cdot \Theta_{\theta}\,{\rm d}\theta,
 \end{split}\end{equation*}
which is equivalent to the relation
\begin{equation} \label{stationary}
\sigma V_t(0,\theta)\,=\,k(\theta)V(0,\theta).
\end{equation}
 We assume the validity of this relation at $\Gamma$ and then consider the second variation of ${\mathcal{J}}$
 \begin{equation*}\begin{split}
 {\mathcal{J}}{''}(0)[h,f]\,=\,&\,\frac {{\rm d}}{{\rm d}s}\Big|_{s=0}{\mathcal{J}}'(0+sf)[h]
 \\
 \,=\,&\, \int_{0}^1 V^{\sigma}(\Theta_{tt} f+f')h'\,{\rm d}\theta
 +\int_{0}^1 V^{\sigma}\Theta_{tt}\cdot f' h\,{\rm d}\theta
 \\
 &+\int_0^1 \Big [(V^{\sigma})_{tt}-2k (V^{\sigma})_{t}+\Theta_{tt \theta}V^{\sigma}\Big]f h\,{\rm d}\theta
 \\
 \,=\,&\, h\cdot V^{\sigma}(\Theta_{tt}f+f')\big|_{0}^1
 +\int_0^1 \big(-V^{\sigma}f{''}-\sigma V^{\sigma-1}V_{\theta}f'\big)h\,{\rm d}\theta
 \\
 &+\int_0^1 \Big[-\sigma V^{\sigma-1}V_{\theta}\Theta_{tt}+(V^{\sigma})_{tt}-2k(V^{\sigma})_t\Big]fh\,{\rm d}\theta.
 \end{split}\end{equation*}
The curve $\Gamma$ is said to be nondegenerate if
 \begin{equation}\label{2 relation}
  {\mathcal{J}}{''}(0)[h,f]\,=\,0,~~~\forall\, h\in H^1(0,1),
  \end{equation}
then $f\equiv 0$.
 Using (\ref{stationary}), it is easy to check that the relation (\ref{2 relation}) is equivalent to that the boundary problem
 \begin{equation}\label{nondegeneracy}
 \begin{split}
f{''}+\sigma V^{-1}&V_{\theta}f'+\Big [\sigma V^{-1}V_{\theta}\Theta_{tt}-\sigma V^{-1}V_{tt}+(1+\frac 1 \sigma)k^2\Big]f\,=\,0
\quad \mbox{in } (0,1),
\\
&f'(1)+k_2f(1)\,=\,0,\qquad
f'(0)+k_1f(0)\,=\,0,
\end{split}\end{equation}
 has only the trivial solution.

\section{Outline of the proof}
\label{section3}
\setcounter{equation}{0}

In this section, the strategy to prove Theorem \ref{theorem 1.1} will be provided step by step.

\subsection{The gluing procedure}
 For any given approximate solution ${\mathbf W}$ (to be chosen later, cf. (\ref{globalapproximation})) and a perturbation term $\Phi$ defined on $\Omega_\e$,
 the function $u({\tilde y})={\mathbf W}({\tilde y})+\Phi({\tilde y})$ satisfies (\ref{problemafterscaling}) if and only if
\begin{equation}
\label{tildeL}
{\mathbf L}(\Phi)\,=\,-{\mathbf E}-{\mathbf N}(\Phi)
~~{\rm in}~\Omega_{\epsilon},
\end{equation}
with boundary condition
\begin{equation}
\label{boundary condition}
\frac {\partial  \,\Phi}{\partial\, {\nu_\e}}+\frac {\partial \,{\mathbf W}}{\partial\, {\nu_\e}}\,=\,0~~~{\rm on }~\partial \Omega_{\epsilon},
\end{equation}
where
\begin{align}
\begin{aligned}\label{globalerror}
{\mathbf L}(\Phi)\,=\,\Delta_{\tilde y}\Phi-V(\epsilon \tilde{y})\Phi+p\,{\mathbf W}^{p-1}\Phi,
\qquad
{\mathbf E}\,=\,\Delta_{\tilde{y}}{\mathbf W}-V(\epsilon \tilde{y}){\mathbf W}+{\mathbf W}^p,
\\
{\mathbf N}(\Phi)\,=\,({\mathbf W}+\Phi)^p-{\mathbf W}^p-p\,{\mathbf W}^{p-1}\Phi.
\qquad
\qquad
\qquad
\end{aligned}
\end{align}

\medskip
 We separate $\Phi$ in the following form
$$
\Phi({\tilde y})\,=\,\eta_{3\delta}^{\epsilon}(s){\check\phi}({\tilde y})\,+\,{\check\psi}({\tilde y}),
$$
where $s$ is the normal coordinate to $\Gamma_\epsilon$.
In the above formula, the cut-off function is defined as
\begin{align}\label{cutoff}
\eta_{3\delta}^{\epsilon}(s)=\eta_{3\delta}(\epsilon |s|),
\end{align}
where $\eta_{\delta}(t)$  is also a smooth cut-off function defined as
$$
\eta_{\delta}(t)=1,\quad\forall\, 0\leq t\leq \delta
\quad\mbox{and}\quad
\eta(t)=0,\quad \forall\, t>2\delta,
$$
for any fixed number $\delta<\delta_0/100$, where $\delta_0>0$ is a small constant in (\ref{Fermicoordinates}).
With this definition, $\Phi$ is a solution of (\ref{tildeL})-(\ref{boundary condition}) if the pair $({\check\phi},{\check\psi})$ satisfies the following coupled system:
\begin{align}
\eta_{3\delta}^{\epsilon}\,\big(\Delta_{\tilde{y}}{\check\phi}-V{\check\phi}+p\,{\mathbf W}^{p-1}{\check\phi}\big)
\,&=\,
\eta_{\delta}^{\epsilon}\,\big[-{\mathbf N}(\eta_{3\delta}^{\epsilon}\,{\check\phi}+{\check\psi})
-{\mathbf E}-p\,{\mathbf W}^{p-1}{\check\psi}\big]\quad\mbox{in } \Omega_\epsilon,
\label{equivalent system-1}
\\
\Delta_{\tilde{y}}\,{\check\psi}-V\,{\check\psi}+(1-\eta_{\delta}^{\epsilon})\,p\,{\mathbf W}^{p-1}\,{\check\psi}
\,&=\,-\epsilon^2\,(\Delta_{\tilde{y}}\,\eta_{3\delta}^{\epsilon})\,{\check\phi}-2\epsilon(\nabla_{\tilde{y}}\, \eta_{3\delta}^{\epsilon})\,(\nabla_{\tilde{y}} \,{\check\phi})\nonumber
\\
&\quad\,\,-(1-\eta_{\delta}^{\epsilon})\,{\mathbf N}(\eta_{3\delta}^{\epsilon}\,{\check\phi}+{\check\psi})-(1-\eta_{\delta}^{\epsilon})\,{\mathbf E}
\quad\mbox{in } \Omega_\epsilon,
\label{equivalent system-2}
\end{align}
with the boundary conditions
\begin{equation}
\label{on the boundary-1}
\eta_{3\delta}^{\epsilon}\,\frac {\partial\, {\check\phi}}{\partial\, {\nu_\e}}+\eta_{\delta}^{\epsilon}\,\frac {\partial\, {\mathbf W}}{\partial \,{\nu_\e}}\,=\,0
\qquad\mbox{on } \partial\,\Omega_\epsilon,
\end{equation}

\begin{equation}
\label{on the boundary-2}
\frac {\partial\, {\check\psi}}{\partial \,{\nu_\e}}+(1-\eta_{\delta}^{\epsilon})\,\frac {\partial \, {\mathbf W}}{\partial\, {\nu_\e}}+\epsilon\, \frac {\partial\, \eta_{3\delta}^{\epsilon}}{\partial \, {\nu_\e}}\,{\check\phi}\,=\,0
\quad\mbox{on } \partial\,\Omega_\epsilon.
\end{equation}

\medskip
Given a small ${\check\phi}$,  we first solve
problem (\ref{equivalent system-2}) and (\ref{on the boundary-2}) for ${\check\psi}$.  Assume now that ${\check\phi}$ satisfies the following decay property
\begin{equation}
\label{decay property}
\big |\nabla {\check\phi}(\tilde{y})\big|+\big|{\check\phi}(\tilde{y})\big|\,\leq\, Ce^{-\gamma/\epsilon} ~~{\rm ~if }\ \mathrm{dist}({\tilde y}, \Gamma_\e)>\delta/\epsilon,
\end{equation}
for a certain constant $\gamma>0$, also that
\begin{align}\label{decayw}
{\mathbf W}({\tilde y})
\ \mbox{is exponentially small if }\ \mathrm{dist}({\tilde y}, \Gamma_\e)>\delta/\epsilon.
\end{align}
These two assumptions will be fulfilled by the choosing of $\phi$ in
Proposition \ref{prop} due to the relation (\ref{vdefine}), and also ${\mathbf W}$ in (\ref{globalapproximation}).
  The solvability can be done in the following way: let us observe that $s$ is the normal coordinate to $\Gamma_\epsilon$.  \;Then the problem
\begin{equation*}
\begin{split}
&\Delta {\check\psi}-\big[V-(1-\eta_{\delta}^{\epsilon})\,p\,{\mathbf W}^{p-1}\big]{\check\psi}\,=\,h
\qquad{\rm in }\,\,\Omega_\epsilon,
\\
&\frac {\partial\,  {\check\psi}}{\partial\,  {\nu_\e}}\,=\,-(1-\eta_{\delta}^{\epsilon})\,\frac {\partial\,  {\mathbf W}}{\partial \, {\nu_\e}}-\epsilon \,\frac {\partial \, \eta_{3\delta}^{\epsilon}}{\partial\,  {\nu_\e}}\,{\check\phi}
\qquad\,\,{\rm on }\,\,\partial\Omega_\epsilon,
\end{split}
\end{equation*}
has a unique bounded solution ${\check\psi}$ whenever $\|h\|_{\infty}<+\infty$.
Moreover,
$$
\|{\check\psi}\|_{\infty}\,\leq\, C\|h\|_{\infty}.
$$
Since ${\mathbf N}$ is power-like with power greater than one, a direct application of contraction mapping principle yields that (\ref{equivalent system-2}) and (\ref{on the boundary-2}) has a unique (small) solution ${\check\psi}={\check\psi}({\check\phi})$ with
\begin{equation}
\label{contraction-1}
\|{\check\psi}({\check\phi})\|_{L^{\infty}}\,\leq\, C\epsilon\big[\|{\check\phi}\|_{L^{\infty}(|s|>\delta/\epsilon)}+
\|\nabla {\check\phi}\|_{L^{\infty}(|s|>\delta/\epsilon)}+e^{-\delta/\epsilon}\big],
\end{equation}
where $|s|>\delta/\epsilon$ denotes the complement in $\Omega_{\epsilon}$ of $\delta/\epsilon$-neighborhood of $\Gamma_\epsilon$. Moreover, the nonlinear operator ${\check\psi}$ satisfies a Lipschitz condition of the form
\begin{equation}
\label{psi-lip}
\|{\check\psi}({\check\phi}_1)-{\check\psi}({\check\phi}_2)\|_{L^{\infty}}\,\leq\, C\epsilon \Big[\,\|{\check\phi}_1-{\check\phi}_2\|_{L^{\infty}(|s|>\delta/\epsilon)}
\,+\,
\|\nabla {\check\phi}_1-\nabla {\check\phi}_2\|_{L^{\infty}(|s|>\delta/\epsilon)}\,\Big].
\end{equation}

\medskip
The key observation is that, after solving (\ref{equivalent system-2}) and (\ref{on the boundary-2}),
we can concern (\ref{equivalent system-1}) and (\ref{on the boundary-1}) as  a local nonlinear problem  involving ${\check\psi}\,=\,{\check\psi}({\check\phi})$, which can be solved in local coordinates in the sense that we can decompose the interaction among the boundary, the concentration set and the potential $V$, and then construct a good approximate solution and also derive the resolution theory of the nonlinear problem by delicate analysis.
This procedure is called a gluing technique in \cite{dPKW}.

\medskip
\subsection{Local formulation of the problem}\label{Sectionlocaloperators}

As described in the above, the next step is to consider (\ref{equivalent system-1})-(\ref{on the boundary-1}) in the neighbourhood of $\Gamma_\e$ so that by the relation ${\tilde y}=y/\e$ in (\ref{rescaling}) close to $\Gamma_\e$, the variables $y$ can be represented by the modified Fermi coordinates $(t, \theta)$ in (\ref{Fermicoordinates}), which
have been set up in Section \ref{section2}.

\subsubsection{Local form of the problem} By using the local coordinates $(t, \theta)$ in (\ref{Fermicoordinates}),
the local forms of the differential operators $\triangle_y$ and $\partial/\partial{\nu}$ in (\ref{original equation-01}) are given in (\ref{laplaceorigin}), (\ref{boundaryoriginal0}) and (\ref{boundaryoriginal1}).
By recalling the rescaling $y=\epsilon {\tilde y}$ in (\ref{rescaling}),
it is useful to  locally introduce change of variables
\begin{equation}
\label{szcoordinate}
u(y)\,=\,\hat{u}({\tilde y})\qquad{\rm and}\qquad \epsilon\,s\,=\,t,~~\epsilon\,z\,=\,\theta.
\end{equation}
%It is easy to check that
%$$\Delta_{t,\theta}u(t,\theta)\,=\,\frac {1}{\epsilon^2}\Delta_{s,z}\hat{u}(s,z).$$
Then (\ref{laplaceorigin}) will give
$$
\epsilon^2\Delta_y \,=\,\epsilon^2\partial^2_{tt}+\epsilon^2\partial^2_{\theta\theta}+\epsilon^2\bar{B}_1(\cdot)
+\epsilon^2\bar{B}_0(\cdot),
$$
where the form of $\bar{B}_1$ in (\ref{B1bar}) will provide
\begin{equation*}
\begin{split}
\epsilon^2\bar{B}_1(\cdot)&\,=\,-\epsilon^2(k+k^2t)\partial_t
-2\epsilon^2t\,\varpi\,\partial^2_{t\theta}
-\epsilon^2\,\varpi\,\partial_{\theta}
\\
&\,=\,-(\epsilon k+\epsilon^2k^2s)\partial_s
-2\epsilon s\,\varpi\,\partial^2_{sz}
-\epsilon\,\varpi\,\partial_z
\\
&\,\equiv\, \hat{B}_1(\cdot),
\end{split}
\end{equation*}
and the expression of $\bar{B}_0$ in (\ref{B0bar}) indicates that
\begin{equation}
\label{tilteB00}
\begin{split}
\epsilon^2\bar{B}_0(\cdot)\,=\,&\,
2\epsilon^2k\,t\,\partial^2_{\theta\theta}
+a_1\,\epsilon^2\,t^2\,\partial^2_{\theta\theta}
+a_2\,\epsilon^2\,t^3\,\partial^2_{tt}
+a_3\,\epsilon^2\,t^2\,\partial^2_{t\theta}
+a_4\,\epsilon^2\,t^2\,\partial_t
+a_5\,\epsilon^2\,t\,\partial_\theta
\\
\,=\,&\, 2\,k\,\epsilon\, s\,\partial^2_{zz}
+a_1\,\epsilon^2\,s^2\,\partial^2_{zz}
+a_2\,\epsilon^3\,s^3\,\partial^2_{ss}
+a_3\,\epsilon^2\,s^2\,\partial^2_{sz}
+a_4\,\epsilon^3\,s^2\,\partial_s
+a_5\,\epsilon^2\,s\partial_z
\\
\,\equiv\,&\, \hat{B}_0(\cdot).
\end{split}
\end{equation}
In the above formulas, the smooth functions $\varpi, a_1, \cdots, a_5$
are given in (\ref{a0}) and (\ref{a1a5}).
This implies that
\begin{align*}
\Delta_{\tilde y}=\e^2\Delta_y=\partial^2_{ss}+\partial^2_{zz}+\hat{B}_1(\cdot)+\hat{B}_0(\cdot).
\end{align*}
Similarly, the normal derivative $\partial/\partial\nu_\e$ can be derived from (\ref{boundaryoriginal0}) and (\ref{boundaryoriginal1}). Indeed, if $z=0$, it becomes the following boundary operator
\begin{equation}
\label{boundarycondition10}
\mathbb{D}_0\,\equiv\,k_1\,\epsilon \,s\,\partial_s+b_1\,\epsilon^2\,s^2\,\partial_s
-\partial_z-k(0)\,\epsilon \,s\,\partial_z+b_2\,\epsilon^2 \,s^2 \partial_z
+\hat{D}^0_0(\cdot),
\end{equation}
where
$$\hat{D}^0_0\,=\,\epsilon\,\bar{D}^0_0.$$
And, at $z=1/\epsilon$, it has the form
\begin{equation}
\label{boundarycondition20}
\begin{split}
\mathbb{D}_1\,\equiv\,k_2\,\epsilon \,s\,\partial_s+b_3\,\epsilon^2\,s^2\,\partial_s
-\partial_z-k(1)\,\epsilon\, s\partial_z+b_4\,\epsilon^2 \,s^2 \,\partial_z
+\hat{D}^1_0(\cdot),
\end{split}
\end{equation}
where
$$\hat{D}^1_0\,=\,\epsilon\,\bar{D}^1_0.$$
In the above equations, the constants $b_1, b_2, b_3, b_4$ are given in (\ref{b1b2}) and (\ref{b3b4}).

\medskip
Hence, the equation in (\ref{equivalent system-1})
 can be locally recast in $(s,z)$ coordinate system as follows
\begin{align}
\begin{aligned}
\label{s-z-laplace}
\eta_{3\delta}^{\epsilon}\check{L}({\check\phi})=\eta_{\delta}^{\epsilon}\,\big[-{\mathbf N}({\eta_{3\delta}^{\epsilon}\check\phi}+{\check\psi})
-{\mathbf E}-p\,{\mathbf W}^{p-1}{\check\psi}\big],
\end{aligned}
\end{align}
where the linear operator is
\begin{align}
\label{tilde L check phi}
{\check L}({\check\phi})={\check\phi}_{ss}+{\check\phi}_{zz}+\hat{B}_{1}({\check\phi})
+\hat{B}_0({\check\phi})-V(\epsilon s,\epsilon z)\,{\check\phi}+p{\mathbf W}^{p-1}{\check\phi}
\end{align}
and  the error is now locally expressed in the form
\begin{align}
\begin{aligned}
{\mathbf E}={\mathbf W}_{ss}+{\mathbf W}_{zz}+\hat{B}_{1}({\mathbf W})+\hat{B}_0({\mathbf W})-V(\epsilon s,\epsilon z)\,{\mathbf W}+{\mathbf W}^{p}.
\end{aligned}
\end{align}
The boundary condition in (\ref{on the boundary-1}) can also be expressed precisely in the local coordinates.
If $z=0$,
\begin{equation}
\label{boundarycondition11}
\eta_{3\delta}^{\epsilon}\,\mathbb{D}_0(\check{\phi})=-\eta^\e_\delta\,{\mathbf G}_0\qquad{\rm with}\qquad {\mathbf G}_0=\mathbb{D}_0({\mathbf W}),
\end{equation}
and also, at $z=1/\epsilon$ there holds
\begin{equation}
\label{boundarycondition21}
\eta_{3\delta}^{\epsilon}\,\mathbb{D}_1(\check{\phi})=-\eta^\e_\delta\,{\mathbf G}_1\qquad{\rm with}\qquad {\mathbf G}_1=\mathbb{D}_1({\mathbf W}).
\end{equation}

\subsubsection{Further changing of variables}\label{further change}
In this section we consider a further change of variables in equation
(\ref{s-z-laplace}) in such a way that it replaces at main order the potential $V$ by $1$.

\medskip
Let
\begin{equation}
\label{alpha-beta}
\alpha(\theta)\,=\,V(0,\theta)^{\frac {1}{p-1}},~~~~~\quad \beta(\theta)\,=\,V(0,\theta)^{\frac 12},
\end{equation}
and fix two twice differentiable functions $f$ and $h$, which satisfy the following constraints
\begin{equation}
\label{fnorm}
\|f\|_{*}\,\equiv\, \|f\|_{L^{\infty}(0,1)}+\|f'\|_{L^{\infty}(0,1)}+\|f{''}\|_{L^2(0,1)}\,\leq\, \epsilon^{ 1/2},
\end{equation}
and
\begin{equation}
\label{constraints of h}
h'(1)+k_2h(1)=0,\qquad\qquad h'(0)+k_1h(0)=0.
\end{equation}
In fact, $f$ will be found by the reduction method described in Sections \ref{section6}-\ref{section7},
while $h$ is a solution to problem (\ref{equation of h}).
By defining the relation
\begin{equation}
\label{vdefine}
{\check\phi}(s,z)\,=\,\alpha(\epsilon z)\phi(x,z)
\quad\mbox{with}\quad
x\,=\,\beta(\epsilon z)\big(s-f(\epsilon z)-h(\epsilon z)\big),
\end{equation}
we can express equation (\ref{s-z-laplace}) in terms of these new coordinates
due to the following trivial computation
\begin{equation}
\label{s-z-transform}
\begin{split}
{\check\phi}_s\,&=\,\alpha \beta \phi_x,
\qquad\qquad
{\check\phi}_{ss}\,=\,\alpha \beta^2\phi_{xx},
\\
{\check\phi}_{z}&\,=\,\epsilon \alpha'\phi+\alpha \phi_x\big(\beta(s-f-h)\big)_z+\alpha \phi_z,
\\
{\check\phi}_{zz}\,&=\,\epsilon^2\,\alpha{''}\phi+2\,\epsilon \,\alpha'\,\Big[\,\phi_x\,\big(\beta(s-f-h)\big)_z+\phi_z\Big]
\\
&\quad+\alpha\Big[\phi_{xx}
\big|\big(\beta(s-f-h)\big)_z\big|^2+2\,\phi_{xz}\big(\beta(s-f-h)\big)_z
+\phi_x\big(\beta(s-f-h)\big)_{zz}+\phi_{zz}\,\Big],
\\
{\check\phi}_{sz}&\,=\,\epsilon \,\alpha'\,\beta \,\phi_x+\epsilon \,\alpha \,\beta'\, \phi_x+\alpha \,\beta\,\phi_{xx}\big(\beta(s-f-h)\big)_z+\alpha \,\beta\, \phi_{xz},
\end{split}
\end{equation}
where
\begin{equation*}
\begin{split}
\big(\beta(s-f-h)\big)_z\,=\,&\,\epsilon\big[\,\beta'(s-f-h)-\beta f'-\beta h'\,\big]
\,=\,\epsilon \beta^{-1}\beta'x-\epsilon\beta f'-\epsilon \beta h',
\\
\big(\beta(s-f-h)\big)_{zz}\,=\,&\,\epsilon^2\Big[\,\beta{''}(s-f-h)-2\beta'f'-2\beta' h'-\beta f{''}-\beta h{''}\,\Big]
\\[1mm]
\,=\,&\,\epsilon^2\beta^{-1}\beta''x-2\epsilon^2\beta'f'-2\epsilon^2 \beta'h'-\epsilon^2\beta f''-\epsilon^2\beta h''.
\end{split}
\end{equation*}
In order to write down the equation, it is also convenient to expand
\begin{equation}
\label{Vexpan}
V(\epsilon s,\epsilon z)\,=\,V(0,\epsilon z)+V_t(0,\epsilon z)\cdot \epsilon s+\frac 12 V_{tt}(0,\epsilon z)\cdot \epsilon^2s^2+a_6(\epsilon s,\epsilon z)\,\epsilon^3\,s^3
\end{equation}
for a smooth function $a_6(t,\theta)$.

\medskip
By the above change of coordinates, the linear operator ${\check L}$ has the following form
\begin{equation*}
%\label{s-z-x-z}
\begin{split}
{\check L}({\check\phi})=&\alpha \beta^2 \phi_{xx}+\epsilon^2 \alpha{''}\phi+2\epsilon \alpha{'}\Bigg[\epsilon \Big(\frac {\beta'}{\beta}x-\beta f'-\beta h'\Big)\phi_x+\phi_z\Bigg]
\\
&+\alpha\Bigg[\epsilon^2\Big |\frac {\beta'}{\beta}x-\beta f'-\beta h'\Big |^2\phi_{xx}+2\epsilon\Big(\frac {\beta'}{\beta}x-\beta f'-\beta h'\Big)\phi_{xz}
\\
&+\epsilon^2\Big(\frac {\beta{''}}{\beta}x-2\beta'f'-2\beta' h'-\beta f{''}-\beta h{''}\Big)\phi_x+\phi_{zz}\Bigg]
-\Bigg[\epsilon k+\epsilon^2k^2\Big(\frac {x}{\beta}+f+h\Big)\Bigg]\alpha \beta \phi_x
\\
&-2\,\epsilon \,\varpi\,\Big(\frac {x}{\beta}+f+h\Big)\,\Big[\epsilon \alpha'\beta \phi_x+\epsilon \alpha \beta' \phi_x+\epsilon\alpha \beta' x\phi_{xx}-\epsilon \alpha \beta^2(f'+h')\phi_{xx}
+\alpha \beta \phi_{xz}\Big]
\\
&-\epsilon\,\varpi\,\Bigg[\epsilon \alpha'\phi+\epsilon\alpha \Big(\frac{\beta'}{{\beta}}x-\beta f'-\beta h'\Big)\phi_{x}+\alpha \phi_z\Bigg]+\hat{B}_0({\check\phi})
\\
&-\Bigg[V(0,\epsilon z)+V_t(0,\epsilon z)\cdot \epsilon s+\frac 12 V_{tt}(0,\epsilon z)\epsilon^2s^2+\epsilon^3a_6(\epsilon s,\epsilon z)s^3\Bigg]\alpha \phi+p{\mathbf W}^{p-1}\alpha\phi.
\end{split}
\end{equation*}
Dividing the above equation by $\alpha \beta^{2}$, we can induce that
\begin{equation}
\bar{L}(\phi)\equiv(\alpha\beta^2)^{-1}{\tilde L}({\check\phi})=\beta^{-2}\phi_{zz}+\phi_{xx}-\phi+\beta^{-2}p{\mathbf W}^{p-1}\phi+B_3(\phi),
\end{equation}
where $B_3(\phi)$ is a linear differential operator defined by
\begin{align}
B_3(\phi)\,=\,&-\beta^{-1}\Big[\epsilon k+\epsilon^2k^2\Big(\frac x \beta+f+h\Big)\Big]\phi_x
\nonumber
\\
&+\beta^{-2}\Bigg[\epsilon^2\Big|\frac {\beta'}{\beta}x-\beta f'-\beta h'\Big|^2\phi_{xx}+2\epsilon \Big(\frac {\beta'}{\beta}x-\beta f'-\beta h'\Big)\phi_{xz}
\nonumber
\\
&\qquad\qquad+\epsilon^2\Big(\frac {\beta{''}}{\beta}x-2\beta' f'-2\beta' h'-\beta f{''}-\beta h{''}\Big)\phi_x\Bigg]
\nonumber
\\
&+\frac {\epsilon^2}{\alpha \beta^2}\alpha{''}\phi
\,+\,\frac {2\epsilon \alpha'}{\alpha \beta^2}\Bigg[\epsilon \Big(\frac {\beta'}{\beta}x-\beta f'-\beta h'\Big)\phi_x+\phi_z\Bigg]
\label{B3v}
\\
&-\frac {2\epsilon\,\varpi}{\alpha \beta^2}\Big(\frac x \beta+f+h\Big)\,\Big[\epsilon \alpha'\beta \phi_x+\epsilon \alpha \beta' \phi_x+\epsilon\alpha \beta' x\phi_{xx}-\epsilon \alpha \beta^2f'\phi_{xx}
\nonumber
\\
&\qquad\qquad\qquad\qquad\qquad-\epsilon \alpha \beta^2h'\phi_{xx}+\alpha \beta \phi_{xz}\Big]
\nonumber
\\
&-\frac {\epsilon\,\varpi}{\alpha \beta^2}\,\Bigg[\epsilon\alpha'\phi+\epsilon\alpha \Big(\frac {\beta'}{\beta}x-\beta f'-\beta h'\Big)\phi_x+\alpha \phi_z\Bigg]
\nonumber
\\
&-\Bigg[\epsilon \,\sigma^{-1}k\,\Big(\frac x \beta+f+h\Big)+\frac {\epsilon^2}{2}\beta^{-2}V_{tt}\Big(\frac x \beta+f+h\Big)^2\Bigg]\phi+B_2(\phi),
\nonumber
\end{align}
where $\varpi$ is defined in (\ref{a0}).~
In the last line of (\ref{B3v}), we have used the relation (\ref{stationary}) given by the assumption that $\Gamma$ is stationary.
Here
\begin{equation}
\label{B2v}
B_2(\phi)\,=\,(\alpha \beta^2)^{-1}\,\hat{B}_0({\check\phi})+(\alpha \beta^2)^{-1}\,a_6(\epsilon s,\epsilon z)\,\epsilon^3 \,s^3\,\alpha\,\phi,
\end{equation}
and $\hat{B}_0({\check\phi})$ is the operator in (\ref{tilteB00}) where derivatives are expressed in terms of formulas through (\ref{s-z-transform}), $a_6$ is given by (\ref{Vexpan}), and $s$ is replaced by $\beta^{-1}x+f+h$.

\medskip
In the coordinates $(x,z)$, the boundary conditions in (\ref{boundarycondition11}) and (\ref{boundarycondition21}) can be recast as follows.  For $z=0$,
\begin{equation}
\label{boundary-1-x-z}
\eta_{3\delta}^{\epsilon}\,\big(D_3^0(\phi)- \phi_z+D_0^0(\phi)\big)\,=\,-\alpha^{-1}\,\eta^\e_\delta\,{\mathbf G}_0,
\end{equation}
where
\begin{equation}
\label{D30}
\begin{split}
D_3^0(\phi)\,=\,&\epsilon\big[b_5x+\beta (k_1f+f')\big]\phi_{x}-\epsilon\alpha'\alpha^{-1}\phi-\epsilon k\Big(\frac{x}{\beta}+f+h\Big)\phi_z
\\
&+\epsilon^2\Big[b_1\,\Big(\frac x \beta +f+h\Big)^2\beta-k\Big(\frac{x}{\beta}+f+h\Big)\Big(\frac{\beta'}{\beta}x-\beta f'-\beta h'\Big)\Big]\phi_x
\\
&-\epsilon^2\frac{k\alpha'}{\alpha}\Big(\frac{x}{\beta}+f+h\Big)\phi+\epsilon^2b_2\Big(\frac{x}{\beta}+f+h\Big)^2\phi_z,
\end{split}
\end{equation}
the constant $b_5$ is
\begin{align}\label{b5}
b_5\,=\,k_1 - \frac {\beta'(0)}{\beta(0)},
\end{align}
and
\begin{align}
\begin{aligned}
D_0^0(\phi)\,=&\,\alpha^{-1}{\hat D}_0^0({\check{\phi}})+\epsilon^3\,b_2\,\Big(\frac{x}{\beta}+f+h\Big)^2\frac{\alpha'}{\alpha}\phi
\\
&+\epsilon^3\,b_2\,\phi_x\,\Big(\frac{x}{\beta}+f+h\Big)^2\Big(\frac{\beta'}{\beta}x-\beta f'-\beta h'\Big).
\end{aligned}
\end{align}
Similarly, for $z=1/\epsilon$,  we have
\begin{equation}
\label{boundary-2-x-z}
\eta_{3\delta}^{\epsilon}\,\big(D_3^1(\phi)- \phi_z+D_0^1(\phi)\big)\,=\,-\alpha^{-1}\,\eta^\e_\delta\,{\mathbf G}_1,
\end{equation}
where
\begin{equation}
\label{D31}
\begin{split}
D_3^1(\phi)\,=\,&\epsilon\big[b_6x+\beta (k_2f+f')\big]\phi_{x}-\epsilon\alpha'\alpha^{-1}\phi-\epsilon k\Big(\frac{x}{\beta}+f+h\Big)\phi_z
\\
&+\epsilon^2\Big[b_3\Big(\frac x \beta +f+h\Big)^2\beta-k\Big(\frac{x}{\beta}+f+h\Big)\Big(\frac{\beta'}{\beta}x-\beta f'-\beta h'\Big)\Big]\phi_x
\\
&-\epsilon^2\frac{k\alpha'}{\alpha}\Big(\frac{x}{\beta}+f+h\Big)\phi+\epsilon^2b_4\Big(\frac x   \beta+f+h\Big)^2\phi_z,
\end{split}
\end{equation}
 the constant
\begin{align}\label{b6}
b_6\,=\,k_2 - \frac {\beta'(1)}{\beta(1)},
\end{align}
and
\begin{align}
\begin{aligned}
D_0^1(\phi)\,=&\,\alpha^{-1}{\hat D}_0^1({{\check{\phi}}})+\epsilon^3\,b_4\,\Big(\frac{x}{\beta}+f+h\Big)^2\frac{\alpha'}{\alpha}\phi
\\
&+\epsilon^3\,b_4\,\phi_x\,\Big(\frac{x}{\beta}+f+h\Big)^2\Big(\frac{\beta'}{\beta}x-\beta f'-\beta h'\Big).
\end{aligned}
\end{align}

\medskip
As a conclusion,
it is derived that (\ref{equivalent system-1}) and (\ref{on the boundary-1}) becomes,
in local coordinates $(x,z)$,
\begin{equation}\label{localproblem1}
\eta_{3\delta}^{\epsilon}\, \bar{L}(\phi)\,=(\alpha\beta^2)^{-1}\,\eta_{\delta}^{\epsilon}\,
\big[-{\mathbf N}({\eta_{3\delta}^{\epsilon}\check\phi}+{\check\psi})
-{\mathbf E}-p\,{\mathbf W}^{p-1}{\check\psi}\big],
\end{equation}
\begin{equation}
\eta_{3\delta}^{\epsilon}\,\big(D_3^0(\phi)- \phi_z+D_0^0(\phi)\big)\,=\,-\alpha^{-1}\,\eta^\e_\delta\,{\mathbf G}_0,
\end{equation}
\begin{equation}\label{localproblem3}
\eta_{3\delta}^{\epsilon}\,\big(D_3^1(\phi)- \phi_z+D_0^1(\phi)\big)\,=\,-\alpha^{-1}\,\eta^\e_\delta\,{\mathbf G}_1.
\end{equation}

\subsection{Derivation of the projected problem by extension method}

Before going further to the resolution theory of (\ref{localproblem1})-(\ref{localproblem3}), we give some notation.

\medskip
\noindent{\textbf {Notation:}}
{\it
For simplicity, denote
\begin{equation}
\label{mathcalF}
\mathcal F\,=\,\big\{\,(f,e)\,|\, \mbox{ the functions } f \mbox{ and } e \mbox{ satisfy } (\ref{fnorm})
\mbox{ and } (\ref{enorm}) \mbox{ respectively}\,\big\}.
\end{equation}
Observe that all functions involved are expressed in $(x,z)$-variables, and the natural domain for those variables can be extended to the infinite strip
\begin{align}
\begin{aligned}\label{domainS}
{\mathcal S}\,=\,&\Big\{\, (x, z)\,:\,-\infty<x<\infty,~~~0<z<1/\epsilon\,\Big\},
\\
\partial_0{\mathcal S}\,=\,&\Big\{\, (x, z)\,:\,-\infty<x<\infty,~~~z=0\,\Big\},
\\
\partial_1{\mathcal S}\,=\,&\Big\{\, (x, z)\,:\,-\infty<x<\infty,~~~z=1/\epsilon\,\Big\}.
\end{aligned}
\end{align}
Accordingly, we define
\begin{align}
\begin{aligned}\label{domainS1}
{\mathcal {\hat{S}}}\,=\,&\Big\{\, (x, \tilde{z})\,:\,-\infty<x<\infty,~~~0<\tilde{z}<{\ell}/\epsilon\,\Big\},
\\
\partial_0{\mathcal {\hat{S}}}\,=\,&\Big\{\, (x, \tilde{z})\,:\,-\infty<x<\infty,~~~\tilde{z}=0\,\Big\},
\\
\partial_1{\mathcal {\hat{S}}}\,=\,&\Big\{\, (x, \tilde{z})\,:\,-\infty<x<\infty,~~~\tilde{z}={\ell}/\epsilon\,\Big\},
\end{aligned}
\end{align}
where ${\ell}$ is a constant defined in (\ref{definenumber}).
}
\qed

\medskip
One of the left jobs is to find the local forms of the approximate solution ${\mathbf W}$
with the constraint (\ref{decayw}) and also of the error ${\mathbf E}$.
We recall the transformation in (\ref{alpha-beta}) and (\ref{vdefine}),
and then define for the local form of ${\mathbf W}$ by the relation
\begin{equation}
\label{vdefine1}
\eta^{\epsilon}_{10\delta}(s)\,{\mathbf W}\,=\,\eta^{\epsilon}_{3\delta}(s)\,\alpha(\epsilon z)\,v(x,z)
\quad\mbox{with}\quad
x\,=\,\beta(\epsilon z)\big(s-f(\epsilon z)-h(\epsilon z)\big).
\end{equation}
As we have done for the equation (\ref{s-z-laplace}),
${\mathbf E}$ can be locally recast in $(x,z)$  coordinate system   by the relation
\begin{equation}\label{errorrelationinterior}
(\alpha\,\beta^2)^{-1}\eta_\delta^\epsilon(s)\,{\mathbf E}\,=\,\eta_\delta^\epsilon(s)\,{\mathcal E},
\end{equation}
where
\begin{equation}
\label{S(v)}
{\mathcal E}=S(v)
\quad\mbox{with}\quad
S(v)\,\equiv\,\beta^{-2}v_{zz}+v_{xx}-v+v^p+B_3(v),
\end{equation}
with the operator $B_3$ defined in (\ref{B3v}).
In the coordinates $(x,z)$, the boundary errors can be recast as follows.
For $z=0$, there holds
\begin{equation}
\label{errorrelationboundary0}
\begin{split}
\alpha^{-1}\eta_\delta^\epsilon(s)\,{\mathbf G}_0\,=\,\eta_\delta^\epsilon(s)\,g_0
\quad\mbox{with}\quad
g_0=D^0_3(v)-v_z+D^0_0(v),
\end{split}
\end{equation}
and also for $z=1/\epsilon$,  we have
\begin{equation}
\label{errorrelationboundary1}
\begin{split}
\alpha^{-1}\eta_\delta^\epsilon(s)\,{\mathbf G}_1\,=\,\eta_\delta^\epsilon(s)\,g_1
\quad\mbox{with}\quad
g_1=D^1_3(v)-v_z+D^1_0(v).
\end{split}
\end{equation}
It is of importance that (\ref{errorrelationinterior}), (\ref{errorrelationboundary0}) and (\ref{errorrelationboundary1}) hold only in a small neighbourhood of $\Gamma_\epsilon$.
Hence we will consider $v$, ${\mathcal E}$ as functions of the variables $x$ and $z$ on ${\mathcal S}$, and also $g_0$, $g_1$ on $\partial_0{\mathcal S}$ and $\partial_1{\mathcal S}$ in the sequel.
We will find $v=v_5$ in (\ref{basic approximate}) step by step in Section \ref{section4}, so that ${\mathbf W}$ will be given in (\ref{globalapproximation}) with the property in (\ref{decayw}).
In fact, to deal with the resonance, in addition to the parameters $f$ and $h$,
we shall add one more parameter, say $e$ (cf. (\ref{u3define})),  in the approximate solution $v$.
In all what follows, we will assume the validity of the following constraint on the parameter $e:$
\begin{equation}
\label{enorm}
\|e\|_{**}\,\equiv\,\|e\|_{L^{\infty}(0,1)}+\epsilon\|e'\|_{L^2(0,1)}+\epsilon^2\|e{''}\|_{L^2(0,1)}\,\leq\, \epsilon^{ 1/2}.
\end{equation}
The exact forms of the error terms ${\mathcal E}$, $g_0$ and $g_1$ will be given in (\ref{new error-2}) and (\ref{g0}).

\medskip
Then define an operator on the whole strip $\mathcal S$ in the form
\begin{equation}
\label{Lmathcal}
\begin{split}
{\mathcal L}(\phi)\,\equiv\,&\,\beta^{-2}\,\phi_{zz}+\phi_{xx}-\phi+p\,w^{p-1}\,\phi\,
+\,\chi(\e|x|)\,B_3(\phi)
\\
&+\,p\,\big(\beta^{-2}\,\chi(\e|x|)\,{\mathbf W}^{p-1}-w^{p-1}\big)\,\phi\,
\quad{\rm in}~\mathcal S,
\end{split}
\end{equation}
and also the operators
\begin{align}
{\mathcal D}_1(\phi)\,=\,\chi(\e|x|)\, D_3^1(\phi)-\phi_z+\,\chi(\e|x|)\, D_0^1(\phi)
\quad{\rm on}~\partial_1\mathcal S,
\label{D1mathcal}
\\
{\mathcal D}_0(\phi)\,=\,\chi(\e|x|)\, D_3^0(\phi)-\phi_z+\,\chi(\e|x|)\, D_0^0(\phi)
\quad{\rm on}~\partial_0\mathcal S,
\label{D0mathcal}
\end{align}
where $\chi(r)$ is a smooth cut-off function which equals $1$ for $0\leq r<10\delta$ that vanishes identically for $r>20\delta$.
For the local form of the nonlinear part, we have
\begin{align}\label{nonlinearpart}
\eta_{\delta}^{\epsilon}\,(\alpha\beta^2)^{-1}\,
\big[{\mathbf N}({\eta_{3\delta}^{\epsilon}\check\phi}+{\check\psi})
+p\,{\mathbf W}^{p-1}{\check\psi}\big]
=\eta_{\delta}^{\epsilon}\,{\mathcal N}(\phi),
\end{align}
by the notation
\begin{align}\label{Nmathcal}
{\mathcal N}(\phi)=\big(v_5+\phi+\psi\big)^p\,-\,v_5^p\,-\,pv_5^{p-1}\phi,
\end{align}
where in the right hand side the term $\psi$ is transformed from ${\check\psi}$ by the relation (\ref{vdefine}).
Rather than solving problem (\ref{localproblem1})-(\ref{localproblem3}) directly, we deal with the following projected problem: for each pair of parameters $f$ and $e$ in $\mathcal F$,
finding functions $\phi\in H^2(\mathcal S), c, d\in L^2(0,1)$ and constants $l_0, l_1, m_0, m_1$ such that
\begin{equation}
\label{projectedproblem1}
{\mathcal L}(\phi)\,=\,\eta_{\delta}^{\epsilon}(s)\,
\big[-{\mathcal E}-{\mathcal N}(\phi)\,\big]
\,+\,c(\epsilon z)\,\chi(\e|x|)\, w_x
\,+\,d(\epsilon z)\,\chi(\e|x|)\, Z\quad{\rm in}~\mathcal S,
\end{equation}
\begin{equation}
\label{projectedproblem2}
{\mathcal D}_1(\phi)\,=\,-\eta_{\delta}^{\epsilon}(s)\, g_1+l_1\,\chi(\e|x|)\, w_x+m_1 \,\chi(\e|x|)\, Z\quad{\rm on}~\partial_1 \mathcal S,
\end{equation}
\begin{equation}
\label{projectedproblem3}
{\mathcal D}_0(\phi)\,=\,-\eta_{\delta}^{\epsilon}(s)\, g_0+l_0\,\chi(\e|x|)\, w_x+m_0 \,\chi(\e|x|)\, Z\quad{\rm on}~\partial_0 \mathcal S,
\end{equation}
\begin{equation}
\label{projectedproblem4}
\int_{\Bbb R}\phi(x,z)w_x(x)\,{\rm d}x \,=\,\int_{\Bbb R}\phi(x,z)Z(x)\,{\rm d}x \,=\,0,\quad 0<z<\frac {1}{\epsilon}.
\end{equation}
In Proposition \ref{prop}, we will prove that this problem has a unique solution $\phi$ so that  ${\check \phi}$ satisfies the constraint (\ref{decay property}) due the relation (\ref{vdefine}).

\medskip
After this has been done, our task is to adjust the parameters $f$ and $e$  by the reduction method such that the functions $c$ and $d$ are identically zero, and the constants $l_1, l_0, m_1$ and $m_0$ are zero too.
By the estimates in Section \ref{section6}, it is equivalent to solving a nonlocal, nonlinear coupled second order system of differential equations for the pair $(f,e)$ with suitable boundary conditions. In section \ref{section7}, we will prove this system is solvable in~$\mathcal F$.

\section{The local  approximate solutions}
\label{section4}
\setcounter{equation}{0}

The main objective of this section is to construct the approximation $v$ and
then evaluate its error terms ${\mathcal E}$, $g_0$ and $g_1$ in the coordinate system $(x, z)$.

\medskip

\subsection{The first approximate solution}
Let $w(x)$ denote the unique positive solution of (\ref{wsolution}).
Then, we take
$$
v_1(x, z)\,=\,w(x)
$$
as the first approximate solution.
For further improvements, it is of importance to evaluate the errors by plugging the approximate solution to  (\ref{S(v)}), (\ref{errorrelationboundary0}) and (\ref{errorrelationboundary1}).

\medskip
We first consider $S(v_1)$ and identify the terms of order $\epsilon$ and those of order $\epsilon^2:$
\begin{align}
&S(v_1)=S(w)\,=\,B_3(w)
\nonumber
\\
&\,=\,-\beta^{-1}\Big[\epsilon\, k+\epsilon^2\,k^2\,\Big(\frac x \beta+f+h\Big)\Big]w_x
\nonumber
\\
&\quad+\beta^{-2}\Bigg[\epsilon^2\Big|\frac {\beta'}{\beta}x-\beta f'-\beta h'\Big|^2w_{xx}+\epsilon^2\Big(\frac {\beta{''}}{\beta}x-2\beta'f'-2\beta'h'-\beta f{''}-\beta h{''}\Big)w_x\Bigg]
\nonumber
\\
&\quad+\frac {\epsilon^2}{\alpha \beta^2}\alpha{''}w
\,+\,\frac {2\epsilon \alpha'}{\alpha \beta^2}\Bigg[\epsilon \Big(\frac {\beta'}{\beta}x-\beta f'-\beta h'\Big)w_x\Bigg]
\label{S(w)}
\\
&\quad-\frac {2\epsilon\,\varpi}{\alpha \beta^2}\Big(\frac x \beta+f+h\Big)\Big[\epsilon \alpha'\beta w_x+\epsilon \alpha \beta' w_x+\epsilon\alpha \beta' xw_{xx}-\epsilon \alpha \beta^2f'w_{xx}
-\epsilon \alpha \beta^2h'w_{xx}\Big]
\nonumber
\\
&\quad-\frac {\epsilon\,\varpi}{\alpha \beta^2}\Big[\epsilon\alpha'w+\epsilon\alpha \Big(\frac {\beta'}{\beta}x-\beta f'-\beta h'\Big)w_x\Big]
\nonumber
\\
&\quad-\Big[\epsilon \,\sigma^{-1}k\,\Big(\frac x \beta+f+h\Big)+\frac {\epsilon^2}{2\beta^{2}}V_{tt}\Big(\frac x \beta+f+h\Big)^2\Big]w+B_2(w).
\nonumber
\end{align}
Here $B_2(w)$ turned out to be of size $\epsilon^3$.  Gathering terms of order $\epsilon$ and $\epsilon^2$,  we get
\begin{align}
\label{sw-gather}
\begin{aligned}
S(w)\,=\,&-\epsilon\beta^{-1}\Big[kw_x+ \,\sigma^{-1}k\, xw\Big]-\epsilon \,\sigma^{-1}k\,(f+h)w
\\
&-\epsilon^2\Big[\frac {k^2}{\beta}fw_x+\frac {f{''}}{\beta}w_x+\frac {2\beta'}{\beta^2}f'w_x+\frac {2\alpha'}{\alpha \beta}f'w_x
+\frac {2\beta'}{\beta^2}f'xw_{xx}+\frac {V_{tt}}{\beta^3}fxw\Big]
\\
&+\epsilon^2( f'^2\,w_{xx}+2f'h'w_{xx}-\frac{1}{2}\,\beta^{-2}\,V_{tt}\,f^2\,w
+2\,\varpi\,f\,f'\,w_{xx}+2\,\varpi\,f'\,h\,w_{xx})
\\
&-\frac {2\epsilon^2\,\varpi}{\alpha\beta^2}\,\Big[-\alpha \beta f{'}xw_{xx}+\alpha'\beta fw_x+\alpha\beta'fw_x
+\alpha \beta'fxw_{xx}-\frac 12\alpha\beta f'w_x\Big]
\\
&+\frac{\epsilon^2}{\beta^{2}}\Big[(-k^2+\frac {\beta{''}}{\beta}+\frac {2\alpha'\beta'}{\alpha \beta})xw_x+\frac {|\beta'|^2}{\beta^2}x^2w_{xx}+\beta^2\,h'^2\,w_{xx}+\frac {\alpha{''}}{\alpha}w
\\
&\qquad\qquad-\frac {1}{2\beta^2}V_{tt}x^2w-\frac 12V_{tt}(2fh+h^2)w\Big]
\\
&-\epsilon^2\Big[\frac {k^2}{\beta}hw_x+\frac {h{''}}{\beta}w_x+\frac {2\beta'}{\beta^2}h'w_x+\frac {2\alpha'}{\alpha \beta}h'w_x
+\frac {2\beta'}{\beta^2}h'xw_{xx}+\frac {V_{tt}}{\beta^3}hxw\Big]
\\
&-\frac {2\epsilon^2\,\varpi}{\alpha \beta^2}\,\Big[\alpha'xw_x+\frac {3\alpha \beta'}{2\beta}xw_x
+\frac {\alpha \beta'}{\beta}x^2w_{xx}- \alpha \beta^2(fh'+hh')w_{xx}+\frac 12 \alpha'w\Big]
\\
&-\frac {2\epsilon^2\,\varpi}{\alpha\beta^2}\,\Big[-\alpha \beta h{'}xw_{xx}+\alpha'\beta hw_x+\alpha\beta'hw_x
+\alpha \beta'hxw_{xx}-\frac 12\alpha\beta h'w_x\Big]+B_2(w)
\\
\,\equiv\,& \epsilon S_1\,+\,\epsilon S_2\,+\,\epsilon^2S_3\,+\,\epsilon^2S_4\,+\,\epsilon^2S_5
\,+\,\epsilon^2S_6\,+\,\epsilon^2S_7\,+\,\epsilon^2S_8\,+\,\epsilon^2S_9\,+\,B_2(w).
\end{aligned}
\end{align}
Let us observe that the quantities $S_1$, $S_3$, $S_5$, $S_7$ and $S_9$ are odd functions of $x$, while $S_2,~S_4$, $S_6$ and $S_8$ are even functions.

\medskip
For the first approximate solution $v_1$, the boundary errors can be formulated as follows.
For $z=0$,
\begin{equation}
\label{boundary-1-w-x-z}
\begin{split}
\epsilon &\Big [b_5\,x\,+\,\beta (k_1\,f+f')\Big]\,w_x\,-\,\epsilon\, \alpha'\,\alpha^{-1}\,w
\\
&+\epsilon^2\,\Big[b_1\,\Big(\frac x \beta+f+h\Big)^2\, \beta- k\,\Big(\frac x \beta+f+h\Big)\Big(\frac{\beta'}{\beta}\,x-\beta f'-\beta h'\Big)\Big]\,w_x
\\
&-\epsilon^2\,\frac{k\,\alpha'}{\alpha}\,\Big(\frac x \beta+f+h\Big)\,w+D_0^0(w).
\end{split}
\end{equation}
Similarly, for $z\,=\,1/\epsilon$,  we have
\begin{equation}
\label{boundary-2-w-x-z}
\begin{split}
\epsilon &\Big [b_6\,x\,+\,\beta (k_2\,f+f')\Big]\,w_x\,-\,\epsilon \,\alpha'\,\alpha^{-1}\,w
\\
&+\epsilon^2\,\Big[b_3\,\Big(\frac x \beta+f+h\Big)^2 \,\beta- k\,\Big(\frac x \beta+f+h\Big)\Big(\frac{\beta'}{\beta}\,x-\beta\,f'-\beta\,h'\Big)\Big]\,w_x
\\
&-\epsilon^2\,\frac{k\,\alpha'}{\alpha}\Big(\frac x \beta+f+h\Big)\,w+D_0^1(w).
\end{split}
\end{equation}

\subsection{The first improvement}

We now want to construct correction terms and establish a further approximation to a real solution that eliminates the terms of order $\epsilon$ in the errors.

\subsubsection{Interior correction layers}
For any given smooth function $\phi$, it can be checked that
\begin{equation}
\label{S(w+phi1)}
S(w+\phi)\,=\,S(w)+L_0(\phi)+B_3(\phi)+N_0(\phi),
\end{equation}
where
\begin{equation}
\label{L0phi}
L_0(\phi)\,=\,\beta^{-2}\phi_{zz}+\phi_{xx}-\phi+pw^{p-1}\phi,
\end{equation}
and
\begin{equation}
\label{N0phi}
N_0(\phi)\,=\,(w+\phi)^p-w^p-pw^{p-1}\phi.
\end{equation}
By (\ref{sw-gather}), we write
\begin{equation}
\label{S(w+phi)}
\begin{split}
S(w+\phi)\,=\,&\, \Big[\epsilon(S_1+S_2)+\phi_{xx}-\phi+p\,w^{p-1}\phi\Big]+\epsilon^2 S_3+\epsilon^2 S_4+\epsilon^2 S_5+\epsilon^2 S_6
\\
&+\epsilon^2 S_7+\epsilon^2 S_8+\epsilon^2 S_9+B_2(w)+\beta^{-2}\phi_{zz}+B_3(\phi)+N_0(\phi).
\end{split}
\end{equation}

\medskip
We choose $\phi=\phi_1$ in order to eliminate the terms between brackets in the above formula.
Namely, for fixed $z$,  we need a solution to
$$
-\phi_{xx}+\phi-pw^{p-1}\phi\,=\,\epsilon(S_1+S_2)\quad\mbox{in } {\mathbb R},\qquad \phi(\pm \infty)\,=\,0.
$$
It is well-known that this problem is solvable provided that
\begin{equation}
\label{solvable condition}
\int_{\Bbb R}(S_1+S_2)w_x\,{\rm d}x \,=\,0.
\end{equation}
Furthermore, the solution is unique under the constraint
\begin{equation}
\label{constraint}
\int_{\Bbb R}\phi w_x\,{\rm d}x \,=\,0.
\end{equation}
In fact,
\begin{equation}
\label{integral S1S2}
\begin{split}
\int_{\Bbb R}(S_1+S_2)w_x\,{\rm d}x &\,=\,\int_{\Bbb R}S_1w_x\,{\rm d}x
\\
%&\,=\,-\beta^{-1}\Big[k\int_{-\infty}^{\infty}w_x^2\,{\rm d}x +\,\sigma^{-1}k\,\int_{-\infty}^{\infty}xww_x\,{\rm d}x \Big]
%\\
&\,=\,-\beta^{-1}\Big[k\int_{\Bbb R}w_x^2\,{\rm d}x -\frac{1}{2}\,\,\sigma^{-1}k\,\int_{\Bbb R}w^2\,{\rm d}x \Big].
\end{split}
\end{equation}
The validity of the identity
\begin{equation}\label{constant1}
-2\int_{\Bbb R}xww_x\,{\rm d}x\,=\int_{\Bbb R}w^2\,{\rm d}x \,=\,2\sigma \int_{\Bbb R}w_x^2\,{\rm d}x,
\end{equation}
will precisely make the amount between brackets in (\ref{integral S1S2}) identically zero.
Hence,  the solution exists and has the form
 \begin{equation}
 \label{phi1}
 \phi_1\,=\,\epsilon(\phi_{11}+\phi_{12}),
 \end{equation}
where
\begin{equation}
\label{phi11phi12}
\phi_{11}(x, z)\,=\, a_{11}(\epsilon z)w_1(x),
\qquad
\phi_{12}(x, z)\,=\, \big[f(\epsilon z)+h(\epsilon z)\big]a_{12}(\epsilon z)w_2(x),
\end{equation}
with
\begin{equation}
\label{a11a12}
a_{11}(\theta)\,=\,-\frac{k(\theta)}{\beta(\theta)},
\qquad
a_{12}(\theta)\,=\,-\sigma^{-1}k(\theta)\,=\,-\frac{V_t(0, \theta)}{|\beta(\theta)|^2}.
\end{equation}
The function $w_1$ is the unique odd solution to
\begin{equation}
\label{w1}
-w_{1,xx}+w_1-pw^{p-1}w_1\,=\,w_x+\frac {1}{\sigma}xw,\qquad\int_{\Bbb R}w_1w_x\,{\rm d}x \,=\,0,
\end{equation}
and $w_2$ is the unique even function satisfying
\begin{equation}
\label{w2}
-w_{2,xx}+w_2-pw^{p-1}w_2\,=\,w.
\end{equation}
In fact, we can also write
\begin{equation}
\label{w2detail}
w_2\,=\,-\frac {1}{p-1}w-\frac  12xw_x.
\end{equation}

\medskip
Take
$$v_2(x,z)\,=\,w(x)+\phi_1(x,z)$$
as the second approximation.
Substituting $\phi=\phi_1$ into (\ref{S(w+phi)}), we get the new error
\begin{equation}
\label{error S(w+phi1)}
\begin{split}
  S(v_2)\,&=\,\epsilon^2\big(S_3+S_4+S_5+S_6+ S_7+ S_8+ S_9
\big)
\\&\qquad+B_2(w)+\beta^{-2}\phi_{1,zz}+B_3(\phi_1)+N_0(\phi_1).
\end{split}
\end{equation}
Observe that since $\phi_1$ is of size $O(\epsilon)$, all terms above carry $\epsilon^2$ in front. We compute, for instance,
\begin{equation}
\label{B3(phi1)}
  B_3(\phi_1)\,=\,-\epsilon \beta^{-1}\big[k(\phi_1)_x+\,\sigma^{-1}k\,x\phi_1\big]-\epsilon \,\sigma^{-1}k\,(f+h)\phi_1+\epsilon^3a_7({\epsilon}s, {\epsilon}z).
\end{equation}

\subsubsection{The boundary correction layers}

In the following, we are in a position to improve the approximate solution so as to remove the boundary error terms of first order  given in (\ref{boundary-1-w-x-z}) and (\ref{boundary-2-w-x-z}), i.e.,
\begin{equation*}
\epsilon \,b_5\,x\,w_x-\epsilon\,\frac{\alpha'(0)}{\alpha(0)}w
\quad\mbox{ and }\quad
\epsilon\, b_6\,x\,w_x-\epsilon\,\frac{\alpha'(1)}{\alpha(1)}w.
\end{equation*}
Note that we will deal with the boundary terms
$$
\epsilon\,\big(k_1\, \beta \,f+\beta \,f'\big)\,w_x
\quad\mbox{ and }\quad
\epsilon\,\big(k_2\, \beta \,f+\beta \,f'\big)\,w_x
$$
by the standard reduction  procedure.

\medskip
We first introduce a function $\mathfrak{b}({\tilde\theta})$ satisfying
\begin{equation}
\label{bequation}
\epsilon^2 \mathfrak{b}{''}+\lambda_0\mathfrak{b}\,=\,0\quad\mbox{in }(0, {\ell}),
\quad \mathfrak{b}{'}({\ell})\,=\,{\bf{c}_1},
\quad \mathfrak{b}{'}(0)\,=\,\bf{c}_0,
\end{equation}
where $\bf{c}_1$, $\bf{c}_0$ are constants as the following
\begin{equation}
\label{c1col}
\begin{split}
\bf{c}_1\,=\,&\int_{\Bbb R}\Big[b_6xw_x-\frac{\alpha'(1)}{\alpha(1)}w\Big]Z\,{\rm d}x,
\\
\bf{c}_0\,=\,&\int_{\Bbb R}\Big[b_5xw_x-\frac{\alpha'(0)}{\alpha(0)}w\Big]Z\,{\rm d}x.
\end{split}
\end{equation}
Note that the constant $\ell$ is given in (\ref{definenumber}), which can be also written as
\begin{equation}
\begin{split}\label{ellnumber}
{\ell}\,=\,&\int_0^1\beta(\theta)\,{\rm d}\theta,
\end{split}
\end{equation}
where $\beta$ is the function defined in (\ref{alpha-beta}).
\noindent It is easy to calculate that
\begin{equation}
\label{btheta}
 \begin{split}
   \mathfrak{b}({\tilde\theta})
\,\equiv\,&\,\epsilon A({\tilde\theta}),\,\qquad \forall\,{\tilde\theta}\in (0, {\ell})
 \end{split}
\end{equation}
where $A$ has the form
\begin{align}
A({\tilde\theta})\,=\,&\, \frac {{\bf{c}_0}\cos\big(\sqrt{\lambda_0}\,{\ell}/\epsilon\big)-\bf{c}_1}{\sqrt{\lambda_0}\sin\big(\sqrt{\lambda_0}\,{\ell}/\epsilon\big)}\cos\Bigg(\frac{\sqrt{\lambda_0}}{\epsilon}{\tilde\theta}\Bigg)
\,+\,
\frac {\bf{c}_0}{\sqrt{\lambda_0}}\sin\Bigg(\frac{\sqrt{\lambda_0}}{\epsilon}{\tilde\theta}\Bigg).
\end{align}
Hence, we choose
\begin{equation}
\label{tildephi}
\phi_{21}(x,z)\,=\, A\big(\mathfrak{a}(\epsilon z)\big)Z(x),
\end{equation}
where
\begin{equation}\label{a(theta)}
\mathfrak{a}(\theta)\,=\,\int_0^{\theta}\beta(r)\,{\rm d}r.
\end{equation}

\medskip
By Corollary 2.4 in \cite{wei-yang}, we then  find a unique solution  $\phi_*$ of the following problem:
\begin{equation*}
\begin{split}
\Delta \phi_*-\phi_*+pw^{p-1}\phi_*\,=\,0\qquad\quad&{\rm in }~~\hat{S},\qquad\quad
\\
\frac {\partial \phi_*}{\partial {\tilde z}}\,=\,b_6xw_x-\frac{\alpha'(1)}{\alpha(1)}w-{\bf{c}_1}Z\quad&{\rm on }~~\partial_1\hat{\mathcal S},
\\
\frac {\partial \phi_*}{\partial {\tilde z}}\,=\,b_5xw_x-\frac{\alpha'(0)}{\alpha(0)}w-{\bf{c}_0}Z\quad&{\rm on }~~\partial_0\hat{\mathcal S},
\end{split}
\end{equation*} where $\hat{\mathcal S}$, $\partial_0\hat{\mathcal S}$
and $\partial_1\hat{\mathcal S}$
are defined in (\ref{domainS1}).\;
Moreover, $\phi_*$ is even in $x$. \;
By the diffeomophism
$$
{\Upsilon}: [0,  {1}/{\epsilon}]\rightarrow [0, {{\ell}}/{\epsilon}]
$$
in the form
\begin{equation}
\label{mathfrak-a-function}
{\Upsilon}(z)\,=\,\epsilon^{-1}\int_0^{\epsilon z}\beta(r){\rm d}r,
\end{equation}
we define
$$
\phi_{22}(x,z)\,=\,\phi_*\big(x,{\Upsilon}(z)\big).
$$
It is easy to check that
\begin{align*}
\begin{aligned}
\phi_{22,z}(x,z)&\,=\,\beta(\epsilon z)\,\phi_{*,{\tilde z}}\big(x,{\Upsilon}(z)\big),
\\
\\
  \phi_{22,zz}(x,z)&\,=\,|\beta(\epsilon z)|^2\,\phi_{*,{\tilde z}{\tilde z}}\big(x,{\Upsilon}(z)\big)\,+\,\epsilon\beta{'}(\epsilon z)\phi_{*,{\tilde z}}\big(x,{\Upsilon}(z)\big).
\end{aligned}
\end{align*}
Hence, $\phi_{22}$ satisfies the following problem:
  \begin{equation}\label{boundary-1}
  \begin{split}
  \beta^{-2}\phi_{22,zz}+\phi_{22,xx}-\phi_{22}+pw^{p-1}\phi_{22}\,=\,\epsilon \beta^{-2}\beta{'}\phi_{*,\tilde z}\big(x,{\Upsilon}(z)\big)
  \qquad&{\rm in }~~ {\mathcal S},
  \\
  \frac {\partial \phi_{22}}{\partial z}\,=\,\beta(1)\Big[\,b_6xw_x-\frac{\alpha'(1)}{\alpha(1)}w-{\bf{c}_1} Z\,\Big]
  \qquad\qquad\quad&{\rm on }~~\partial_1{\mathcal S},\qquad
\\
\frac {\partial \phi_{22}}{\partial z}\,=\,\beta(0)\Bigg[\,b_5xw_x-\frac{\alpha'(0)}{\alpha(0)}w-{\bf{c}_0} Z\,\Bigg]
\qquad\qquad\quad&{\rm on }~~\partial_0{\mathcal S},\qquad
  \end{split}\end{equation}
where $\mathcal S$, $\partial_0{\mathcal S}$ and $\partial_1{\mathcal S}$ are defined in (\ref{domainS}).

\medskip
We finally set the boundary correction  term
\begin{align}
\begin{aligned}
\label{phi2}
 \phi_2(x,z)\,=\,&\,\epsilon\,\xi(\epsilon z)\,\big(\phi_{21}(x,z)+\phi_{22}(x,z)\big),
\end{aligned}
\end{align}
where
$$
\xi(\theta)\,\equiv\,\frac{\chi_0(\theta)}{\beta(0)}
+\frac{1-\chi_0(\theta)}{\beta(1)},
$$
and the smooth cut-off function $\chi_0$ is defined by
$$
\chi_0(\theta)\,=\,1~~{\rm if}~|\theta|<\frac 12,
\quad {\rm and}\quad
\chi_0(\theta)\,=\,0~~{\rm if}~|\theta|>\frac 34.
$$
Note that $\phi_2(x,z)$ is size of $O(\epsilon)$ under the gap condition
(\ref{gapcondition}).

\medskip
Let $v_3(x,z)\,=\,w+\phi_1+\phi_2$
 be the third approximate solution. A careful computation can indicate that
 the new boundary error takes the following form
 \begin{align}
 &\epsilon \,\beta\,\big(k_1\,f+f'\big)\,w_x
 \nonumber
 \\
  &+\epsilon^2\big(b_5x+k_1\beta f+\beta f'\big) \Big(a_{11}w_{1,x}+a_{12}(f+h)w_{2,x}+\beta^{-1}\big(A(0)Z_x+\phi_{22,x}\big)\Big)
  \nonumber
  \\
 &-\epsilon^2\,\alpha'\,\alpha^{-1}\Big\{a_{11}\,w_1+a_{12}\,(f+h)\,w_2
 +\beta^{-1}\,\big[A(0)\,Z+\phi_{22}\big]\Big\}
  \nonumber
  \\
&-\epsilon^2\,\frac {k\alpha'}{\alpha}\Big(\frac{x}{\beta}+f+h\Big)\,\Big[w+\epsilon\, a_{11}\,w_1+\epsilon\, a_{12}\,(f+h)\,w_2+\epsilon\,\beta^{-1}\,\Big( A(0)\,Z+\phi_{22}\Big)\Big]
  \nonumber
  \\
 &+\epsilon^2\,\Big[b_1\,\Big(\frac x \beta +f+h\Big)^2 \,\beta-k\,\Big(\frac x \beta +f+h\Big)\Big(\frac{\beta'}{\beta}\,x-\beta\, f'-\beta\, h'\Big)\Big]
  \nonumber
  \\
 &\quad\times \Big[w_x+\epsilon\, a_{11}\,w_{1,x}+\epsilon \,a_{12}\,(f+h)\,w_{2,x}+\epsilon\, \beta^{-1}\,\big(A(0)\,Z_x+\phi_{22,x}\big)\Big]
  \nonumber
  \\
 &-\epsilon^2\,\Big[a'_{11}\,w_1+a'_{12}\,(f+h)\,w_2+a_{12}\,(f'+h')\,w_2\Big] \label{second-approximated-boundary-new-1}
 \\
 &-\epsilon^2\,k\Big(\frac{x}{\beta}+f+h\Big)\times \Big[\epsilon\, a'_{11}\,w_1+\epsilon\, a'_{12}\,(f+h)\,w_2+\epsilon\, a_{12}\,(f'+h')\,w_2
  \nonumber
 \\
 &\qquad\qquad\qquad\qquad\qquad\qquad\qquad\qquad+\beta^{-1}\,\Big(\epsilon \, A'(0)\,\beta(0)\,Z+\phi_{22,z}\Big)\Big]
  \nonumber
  \\
 &+\epsilon^2\,b_2\,\Big(\frac{x}{\beta}+f+h\Big)^2\times\Big[\epsilon^2 \,a'_{12}\,w_1+\epsilon^2\, a'_{12}\,(f +h)\,w_2+\epsilon^2\, a_{12}\,(f'+h')\,w_2
  \nonumber
 \\
& \qquad\qquad\qquad\qquad\qquad\qquad\qquad\qquad+\epsilon \,\beta^{-1}\,\big(\epsilon\, A'(0)\,\beta(0)\,Z+\phi_{22,z}\big)\Big]
 \nonumber
\\
&+D_0^0(w+\phi_1+\phi_2)\quad {\rm on } ~\partial S_0.
 \nonumber
 \end{align}
Similar estimate holds on  $\partial S_1$.

\subsection{The second improvement}

To decompose the coupling of the parameters $f$ and $e$ on the boundary of $\mathcal S$ (in the sense of projection against $Z$ in $L^2$), by Lemma 2.2 in \cite{wei-yang}, we
introduce a new term $\phi^{*}$ (even in $x$) defined by the following problem
$$
\Delta \phi^{*}-\tilde{K}\phi^{*}+p\,w^{p-1}\phi^{*}\,=\,0\quad{\rm in}~\hat{\mathcal S},
$$
$$
\phi^{*}_{{\tilde z}}\,=\,H_1(x)\quad {\rm on }~~\partial_0\hat{\mathcal S},
\qquad
\phi^{*}_{{\tilde z}}\,=\,H_2(x)\quad {\rm on }~~\partial_1\hat{\mathcal S},
$$
where $\tilde{K}$ is a large positive constant, the functions
$H_1(x)$ and $H_2(x)$ are given by the following
\begin{align}
H_1(x)\,=\,&\,2b_1\big[f(0)+h(0)\big]x\,w_x
\,-\,k(0)\Big[\frac{\beta'(0)}{\beta(0)}\big(f(0)+h(0)\big)-\big(f'(0)+h'(0)\big)\Big]x\,w_x
\nonumber
\\
&\,+\,b_5\,x\Big[a_{12}(0)\,\big(f(0)+h(0)\big)\,w_{2,x}+\beta^{-1}(0)\big(A(0)\,Z_x+\phi_{22,x}(x,0)\big)\,\Big]
\nonumber
\\
&\,+\,\Big[k_1\,\beta(0)\,f(0)+\beta(0)\,f'(0)\Big]\,a_{11}(0)\,w_{1,x}-\,k(0)\,\big(f(0)+h(0)\big)\frac{\alpha'(0)}{\alpha(0)}\,w
\nonumber
\\
&\,-\,\frac{\alpha'(0)}{\alpha(0)}\Big[a_{12}(0)\,(f(0)+h(0))\,w_2
+\beta^{-1}(0)\Big(A(0)\,Z+\phi_{22}(x,0)\Big)\Big]
\label{H1new}
\\
&\,-\,[f'(0)+h'(0)]\,a_{12}(0)\,w_2\,-\,[f(0)+h(0)]\,a'_{12}(0)\,w_2
\nonumber
\\
&\,-\,k(0)\,\beta^{-1}(0)\,\big[f(0)+h(0)\big]\cdot\big[\epsilon \, A'(0)\,\beta(0)\,Z+\phi_{22,z}(x,0)\big],
\nonumber
\end{align}
and
\begin{align}
H_2(x)\,=\,&\, 2b_3\big[f(1)+h(1)\big]x\,w_x
\,-\,k(1)\Big[\frac{\beta'(1)}{\beta(1)}\big (f(1)+h(1)\big)-\big(f'(1)+h'(1)\big)\Big]x\,w_x
\nonumber
\\
&\,+\,b_6\,x\Big[a_{12}(1)\,\big(f(1)+h(1)\big)\,w_{2,x}+\beta^{-1}(1)\big(A({\ell})\,Z_x
+\phi_{22,x}(x,1/\epsilon)\big)\,\Big]
\nonumber
\\
&\,+\,\Big[k_2\,\beta(1)\,f(1)+\beta(1)\,f'(1)\Big]\,a_{11}(1)\,w_{1,x}
-\,k(1)\,\big(f(1)+h(1)\big)\frac{\alpha'(1)}{\alpha(1)}\,w
\nonumber
\\
&\,-\,\frac{\alpha'(1)}{\alpha(1)}\Big[a_{12}(1)\,\big(f(1)+h(1)\big)\,w_2
+\beta^{-1}(1)\Big(A({\ell})\,Z
+\phi_{22}(x,1/\epsilon)\Big)\Big]
\label{H2new}
\\
&\,-\,\big[f'(1)+h'(1)\big]\,a_{12}(1)\,w_2\,-\,\big[f(1)
+h(1)\big]\,a'_{12}(1)\,w_2\,
\nonumber
\\
&\,-\,k(1)\beta^{-1}(1)\,\big[f(1)+h(1)\big]\cdot\big[\epsilon \, A'({\ell})\,\beta(1)\,Z+\phi_{22,z}(x,1/\epsilon)\big].
\nonumber
\end{align}
We define a boundary correction term again
\begin{equation}
\label{boundary layer}
\phi_3(x, z)\,=\,\epsilon^2\xi(\epsilon z)\phi^{*}(x,{\Upsilon}(z))\,\equiv\, \epsilon^2 \hat{\phi}(x, z).
\end{equation}
Note that $\phi_3$ is an exponential decaying function which is of order $\epsilon^2$ and even in the variable $x$.

\medskip
To deal with the resonance phenomena and improve the approximation for a solution still keeping the term of $\epsilon^2$, we need to introduce a new parameter $e$ in additional to $f$ and $h$,
and define the fourth approximate solution to the problem near $\Gamma_\epsilon$ as
\begin{equation}
\label{u3define}
v_4(x, z)\,=\,w+\phi_1+\phi_2+\epsilon e(\epsilon z)Z(x)+\phi_3.
\end{equation}

\subsection{The third improvement}
We want to construct a further approximation to a solution that eliminates the term of $h$, $h'$
and the even terms in the error
$$
S(w+\phi_1+\phi_2+\epsilon eZ+\phi_3).
$$
This can be fulfilled by adding a term ${\tilde\phi}$ and then considering the following term
\begin{equation}
\label{w+phi1+phi2+eeZ+phi3+Phi}
\begin{split}
S&(w+\phi_1+\phi_2+\epsilon eZ+\phi_3+{\tilde\phi})
\\[1mm]
\,=\,&S(w)+L_0(\phi_1+\phi_2+\epsilon eZ+\phi_3+{\tilde\phi})+B_3(\phi_1)+B_3(\phi_2)
\\[1mm]
&+B_3(\epsilon e Z)+B_3(\phi_3)+B_3({\tilde\phi})+N_{0}(\phi_1+\phi_2+\epsilon eZ+\phi_3+{\tilde\phi})
\\[1mm]
\,=\,&S(w)+ \epsilon(\epsilon^2\beta^{-2}e{''}Z+\lambda_0 eZ)+L_0(\phi_1)+L_0(\phi_2)+L_0(\phi_3)+L_0({\tilde\phi})
\\[1mm]
&+B_3(\phi_1)+B_3(\phi_2)+B_3(\epsilon e Z)
+B_3(\phi_3)+B_3({\tilde\phi})+N_{0}(\phi_1+\phi_2+\epsilon eZ+\phi_3+{\tilde\phi}).
\end{split}
\end{equation}
The details will be given in the sequel.

\subsubsection{Rearrangements of the error components}
The first objective of this part is to compute the terms in (\ref{w+phi1+phi2+eeZ+phi3+Phi}).
It is easy to compute that
\begin{equation*}
  L_0(\phi_1)\,=\,\beta^{-2}\phi_{1,zz}+\phi_{1,xx}-\phi_1+pw^{p-1}\phi_1\,=\,\beta^{-2}\phi_{1,zz}-\epsilon(S_1+S_2).
\end{equation*}

\medskip
Recall the expression of $\phi_2$ and $A(\tilde{\theta})$ defined in (\ref{phi2}) and (\ref{btheta}).
By using the equation of $\phi_{22}$ in (\ref{boundary-1}) and the equation of $Z$ in (\ref{eigenvalue}),
we get
 \begin{equation}
 \label{L0phi2}
 \begin{split}
  L_0(\phi_2)&\,=\,\beta^{-2}\,\phi_{2,zz}+\phi_{2,xx}-\phi_2+p\,w^{p-1}\,\phi_2
%\\
%  \,&=\,\Bigg\{\frac{2\epsilon^{2}}{\beta^{2}}\xi'(\epsilon z)\big[\epsilon A'(\mathfrak{a}(\epsilon z))\beta %Z+\phi_{22,z}\big]+\frac{\epsilon^2\beta'}{\beta^{2}}\xi(\epsilon z)\big[\epsilon A'(\mathfrak{a}(\epsilon z)) %Z+\phi_{*,\tilde{z}}\big]\Bigg\}
%\\
%&\quad+\frac{\epsilon^3}{\beta^{2}}\xi''(\epsilon z)\Big[A(\mathfrak{a}(\epsilon z))Z+\phi_{22}\Big]
\\
&=M_{11}(x,z)+M_{12}(x,z),
  \end{split}
 \end{equation}
where
 \begin{equation}
 \label{hatL0phi2}
M_{11}(x,z)=\frac{2\epsilon^{2}}{\beta^{2}}\xi'(\epsilon z)\big[\epsilon A'(\mathfrak{a}(\epsilon z))\beta Z+\phi_{22,z}\big]+\frac{\epsilon^2\beta'}{\beta^{2}}\xi(\epsilon z)\big[\epsilon A'(\mathfrak{a}(\epsilon z)) Z+\phi_{*,\tilde{z}}\big],
 \end{equation}
 and
 \begin{equation}
 \label{tildeL0phi2}
   M_{12}(x,z)=\frac{\epsilon^3}{\beta^{2}}\xi''(\epsilon z)\big[A(\mathfrak{a}(\epsilon z))Z+\phi_{22}\big].
 \end{equation}

\medskip
According to the fact that $\phi^{*}$ satisfies
$$
\Delta \phi^{*}-\tilde{K}\phi^{*}+pw^{p-1}\phi^{*}\,=\,0\quad{\rm in}~\hat{\mathcal S},
$$
and
$$
\phi_3\,=\,\epsilon^2\,\hat{\phi}\,=\,\epsilon^2\,\xi(\epsilon z)\,\phi^*,
$$
it follows that
 \begin{equation}
 \label{L0phi3}
 \begin{split}
  L_0(\phi_3)&=\,\epsilon^2\,\big[\beta^{-2}\,\hat{\phi}_{zz}+\hat{\phi}_{xx}-\hat{\phi}+p\,w^{p-1}\,\hat{\phi}\big]
 \\[1mm]
 &=\,\epsilon^2\,(\tilde{K}-1)\hat{\phi}+\epsilon^2\,\big[\beta^{-2}\,\hat{\phi}_{zz}+\hat{\phi}_{xx}-\tilde{K}\,\hat{\phi}+p\,w^{p-1}\,\hat{\phi}\big]
% \\[1mm]
% &=\,\epsilon^2(\tilde{K}-1)\hat{\phi}+\frac{\epsilon^4}{\beta^{2}}\xi''(\epsilon  z)\phi^*(x,{\Upsilon}(z))+\frac{2\epsilon^3}{ \beta}\xi'(\epsilon z)
%\phi^*_{{\tilde z}}(x,{\Upsilon}(z))
%\\
%&\quad+\frac{\epsilon^3\beta'}{\beta^{2}}\xi(\epsilon z)\phi^*_{{\tilde z}}(x,{\Upsilon}(z))
\\[1mm]
&=M_{21}(x,z)+M_{22}(x,z),
 \end{split}
\end{equation}
where
\begin{equation}
\label{hatL0e^2hatphi}
M_{21}(x,z)\,=\,\epsilon^2\,(\tilde{K}-1)\,\hat{\phi},
\end{equation}
and
\begin{equation}
\label{tildeL0e^2hatphi}
M_{22}(x,z)=\frac{\epsilon^4}{\beta^{2}}\xi''(\epsilon z)\phi^*(x,{\Upsilon}(z))+\frac{2\epsilon^3}{ \beta}\xi'(\epsilon z)
\phi^*_{{\tilde z}}(x,{\Upsilon}(z))+\frac{\epsilon^3\beta'}{\beta^{2}}\xi(\epsilon z)\phi^*_{{\tilde z}}(x,{\Upsilon}(z)).
\end{equation}

\medskip
Recall the expression of $B_3(\phi_1)$ in (\ref{B3(phi1)}), we obtain that
\begin{equation}
\label{Bphi1}
\begin{split}
B_3(\phi_1)&=-\epsilon\, \beta^{-1}\big[k(\phi_1)_x+\,\sigma^{-1}k\,x\phi_1\big]-\epsilon \,\sigma^{-1}k\,(f+h)\,\phi_1+\epsilon^3\,a_7({\epsilon}s,{\epsilon}z )
%\\
%&=\frac{\epsilon^2k^2}{\beta\sigma}\,\Big[\frac{\sigma}{\beta}w_{1,x}+hw_{2,x}
%+\frac{1}{\beta}xw_{1}+\frac{1}{\sigma}hxw_{2}+
%hw_{1}+\frac{\beta }{\sigma}h^2w_2+\frac{2\beta }{\sigma}fhw_2\Big]
%\\
%&\qquad\quad+\frac{\epsilon^2k^2}{\beta\sigma}\,\big[fw_{2,x}
%+\sigma^{-1}fxw_{2}+fw_{1}+\sigma^{-1}\beta f^2w_2\big]+\epsilon^3a_7(\epsilon s,\epsilon z)
\\
&=M_{31}(x,z)+M_{32}(x,z),
\end{split}
\end{equation}
where
\begin{equation}
\label{M31}
M_{31}(x,z)=\frac{\epsilon^2k^2}{\beta\sigma}\,\Big[\frac{\sigma}{\beta}w_{1,x}+hw_{2,x}
+\frac{1}{\beta}xw_{1}+\frac{1}{\sigma}hxw_{2}+
hw_{1}+\frac{\beta }{\sigma}h^2w_2+\frac{2\beta }{\sigma}fhw_2\Big],
\end{equation}
and
\begin{equation}
\label{M32}
M_{32}(x,z)=\frac{\epsilon^2k^2}{\beta\sigma}\,\big[fw_{2,x}
+\sigma^{-1}fxw_{2}+fw_{1}+\sigma^{-1}\beta f^2w_2\big]+\epsilon^3a_7(\epsilon s,\epsilon z).
\end{equation}

\medskip
What's more, we can decompose $B_3(\phi_2)$ as following
\begin{equation*}
%\label{B3(phi2)}
  \begin{split}
    B_3(\phi_2) &=-\epsilon\beta^{-1}k\phi_{2,x}+2\epsilon\beta^{-2} \Big(\frac {\beta'}{\beta}x-\beta f'-\beta h'\Big)\phi_{2,xz}+\frac {2\epsilon \alpha'}{\alpha \beta^2}\phi_{2,z}-\frac {2\epsilon\,\varpi}{ \beta}\Big(\frac x \beta+f+h\Big)\, \phi_{2,xz}
    \\
      &\quad-\frac {\epsilon\,\varpi}{\beta^2}\phi_{2,z} -\epsilon \,\sigma^{-1}k\,\Big(\frac x \beta+f+h\Big)\phi_2+\epsilon^3a_8(\epsilon s,\epsilon z)
      %\\
     % &=-\frac{\epsilon^2}{\beta}\,k\,\xi(\epsilon z)\big[A(\mathfrak{a}(\epsilon z))Z_x+\phi_{22,x}\big]+\frac{2\,\epsilon^2}{\beta^{2}}\Big(\frac{\beta'}{\beta}x-\beta h'\Big)\xi(\epsilon z)\big[\epsilon A'(\mathfrak{a}(\epsilon z))\beta Z_x+\phi_{22,xz}\big]
     % \\
     % &
     % \quad+\frac {2\epsilon^2 \alpha'}{\alpha \beta^2}\xi(\epsilon z)\big[\epsilon A'(\mathfrak{a}(\epsilon z))\beta Z+\phi_{22,z}\big]
     % -\frac {2\epsilon^2\varpi}{\beta}\Big(\frac x \beta+h\Big)\,\xi(\epsilon z)\big[\epsilon A'(\mathfrak{a}(\epsilon z))\beta Z_x+\phi_{22,xz}\big]
     % \\
     % &\quad-\frac{\epsilon^2\varpi}{\beta^{2}} \,\xi(\epsilon z)\big[\epsilon A'(\mathfrak{a}(\epsilon z))\beta Z+\phi_{22,z}\big]-\epsilon^2\,\sigma^{-1}k\Big(\frac{x}{\beta}+f+h\Big)\xi(\epsilon z)\big[A(\mathfrak{a}(\epsilon z)) Z+\phi_{22}\big]
      \\
      &= M_{41}(x,z)+M_{42}(x,z),
  \end{split}
\end{equation*}
where
\begin{equation*}
%\label{M41}
  \begin{split}
     M_{41}(x,z)\,&=-\frac{\epsilon^2}{\beta}\,k\,\xi(\epsilon z)\big[A(\mathfrak{a}(\epsilon z))Z_x+\phi_{22,x}\big]+\frac{2\,\epsilon^2}{\beta^{2}}\Big(\frac{\beta'}{\beta}x-\beta h'\Big)\xi(\epsilon z)\big[\epsilon A'(\mathfrak{a}(\epsilon z))\beta Z_x+\phi_{22,xz}\big]
      \\
      &
      \quad+\frac {2\epsilon^2 \alpha'}{\alpha \beta^2}\xi(\epsilon z)\big[\epsilon A'(\mathfrak{a}(\epsilon z))\beta Z+\phi_{22,z}\big]
      -\frac {2\epsilon^2\varpi}{\beta}\Big(\frac x \beta+h\Big)\,\xi(\epsilon z)\big[\epsilon A'(\mathfrak{a}(\epsilon z))\beta Z_x+\phi_{22,xz}\big]
      \\
      &\quad-\frac{\epsilon^2\varpi}{\beta^{2}} \,\xi(\epsilon z)\big[\epsilon A'(\mathfrak{a}(\epsilon z))\beta Z+\phi_{22,z}\big]-\epsilon^2\,\sigma^{-1}k\Big(\frac{x}{\beta}+f+h\Big)\xi(\epsilon z)\big[A(\mathfrak{a}(\epsilon z)) Z+\phi_{22}\big],
   \end{split}
\end{equation*}
and
\begin{equation*}
%\label{M42}
M_{42}(x,z)=-\,\frac{2\epsilon^2}{\beta}\,\big(f'+\varpi\,f\big)\,\xi(\epsilon z)\,\big[\epsilon\,A'(\mathfrak{a}(\epsilon z))\,\beta \,Z_x+\phi_{22,xz}\big]
+\epsilon^3a_8(\epsilon s,\epsilon z).
\end{equation*}

\medskip
The computation for next term is the following:
\begin{equation}
\label{ B3(epsiloneZ)}
\begin{split}
  B_3(\epsilon eZ)&=-\epsilon^2\,\beta^{-1}\,k\,e\,Z_{x}-\epsilon^2\sigma^{-1}k\,\Big(\frac{x}{\beta}+f+h\Big)\,e\,Z+\epsilon^3a_9(\epsilon s,\epsilon z)
  %\\
 %&=-\epsilon^2\sigma^{-1}k\,(f+h)\,e\,Z+\Big\{-\epsilon^2\,\beta^{-1}\,k\,e\,Z_{x}-\epsilon^2\sigma^{-1}\beta^{-1}k\,x\,e\,Z
 % +\epsilon^3a_9(\epsilon s,\epsilon z)\Big\}
  \\[1mm]
  &= M_{51}(x,z)+M_{52}(x,z),
  \end{split}
\end{equation}
where
\begin{equation}
\label{M51}
 M_{51}(x,z)=-\epsilon^2\sigma^{-1}k\,(f+h)\,e\,Z,
\end{equation}
and
\begin{equation}
\label{M52}
M_{52}(x,z)=-\epsilon^2\,\beta^{-1}\,k\,e\,Z_{x}-\epsilon^2\sigma^{-1}\beta^{-1}k\,x\,e\,Z
  +\epsilon^3a_9(\epsilon s,\epsilon z).
\end{equation}

\medskip
The main order of the nonlinear term
\begin{equation}
\label{N0()}
\begin{split}
N_0(\phi_1+\phi_2+\epsilon eZ+\phi_3+{\tilde\phi})\,=\,&(w+\phi_1+\phi_2+\epsilon eZ+\phi_3+{\tilde\phi})^p-w^p
\\
&-pw^{p-1}(\phi_1+\phi_2+\epsilon eZ+\phi_3+{\tilde\phi}),
\end{split}
\end{equation}
is in the following
$$
\frac {p(p-1)}{2}w^{p-2}(\phi_1+\phi_2+\epsilon eZ+\phi_3+{\tilde\phi})^2,
$$
which can be decomposed as following
\begin{align*}
   &\quad\frac {p(p-1)}{2}w^{p-2}(\phi_1+\phi_2+\epsilon eZ+\phi_3+{\tilde\phi})^2
   \\
   & =\frac {p(p-1)}{2}w^{p-2}\Big[\phi_1^2+\phi_2^2+\epsilon^2e^2Z^2+\epsilon^4\hat{\phi}^2+{\tilde\phi}^2+2\,\phi_1\phi_2+2\,\epsilon \phi_1eZ+2\,\epsilon^2\phi_1\hat{\phi}
   \\
   &\qquad\qquad\qquad\qquad+2\,\phi_1{\tilde\phi}+2\,\epsilon \phi_2eZ+2\,\epsilon^2\phi_2\hat{\phi}+2\,\phi_2{\tilde\phi}+2\,\epsilon^3 eZ\hat{\phi}+2\,\epsilon eZ{\tilde\phi}+2\,\epsilon^2\hat{\phi}{\tilde\phi}\Big]
    \\
    &=\epsilon^2\frac {p(p-1)}{2}w^{p-2}\Big[a_{11}^2w_{1}^2+2a_{11}a_{12}hw_{1}w_{2}
    +a_{12}^2\big(2fh+h^2\big)w_{2}^2+\xi(\epsilon z)^2\big(A(\mathfrak{a}(\epsilon z)Z+\phi_{22})^2
    \\[1mm]
    &\qquad\qquad\qquad\qquad\quad+e^2Z^2+2 a_{11}w_1\xi(\epsilon z)\big(A(\mathfrak{a}(\epsilon z)Z+\phi_{22}\big)+2 a_{12}(f+h)\,w_2eZ
    \\[1mm]
    &\qquad\qquad\qquad\qquad\quad+2\,a_{12}(f+h)w_2\xi(\epsilon z)\big(A(\mathfrak{a}(\epsilon z)Z+\phi_{22}\big)
+2\,\xi(\epsilon z)\big(A(\mathfrak{a}(\epsilon z)Z+\phi_{22}\big)e Z\Big]
 \\[1mm]
  &\quad +\frac {p(p-1)}{2}w^{p-2}\Big[2\,\epsilon^2\,a_{11}\,a_{12}\,w_1\,w_2\,f
  +\epsilon^2\,a_{12}^2\,w_{2}^2\,f^2+2\,\epsilon^2\,a_{11}w_1 eZ+\epsilon^4\,\hat{\phi}^2+{\tilde\phi}^2+2\,\epsilon^2\,\phi_1\,\hat{\phi}
  \\[1mm]
  &\qquad\qquad\qquad\qquad\quad+2\,\phi_1\,{\tilde\phi}+2\,\epsilon^2\,\phi_2\,\hat{\phi}+2\,\phi_2\,{\tilde\phi}+2\,\epsilon^3\, e\,Z\,\hat{\phi}+2\, \epsilon \,e\,Z\,{\tilde\phi}+2\,\epsilon^2 \,\hat{\phi}\,{\tilde\phi}\Big]
  \\[1mm]
  &\equiv M_{61}(x,z)\,+\,{\tilde M}_{62}(x,z).
\end{align*}
For the convenience of notation, we also denote
\begin{align}
M_{62}(x,z)=N_0(\phi_1+\phi_2+\epsilon eZ+\phi_3+{\tilde\phi})-M_{61}(x, z).
\end{align}

\medskip
Whence, according to the above rearrangements, we rewrite the expression (\ref{w+phi1+phi2+eeZ+phi3+Phi}) in terms of
\begin{equation}
\label{S(w+phi_1+phi_2+varphi)}
\begin{split}
  &S(w+\phi_1+\phi_2+\epsilon\,e\,Z+\phi_3+{\tilde\phi})
  \\
  &=\Big[\epsilon^2S_6+\epsilon^2S_7+\epsilon^2S_8+\epsilon^2S_9+M_{11}(x,z)+M_{21}(x,z)+M_{31}(x,z)
  \\
  &\quad\qquad+M_{41}(x,z)+M_{51}(x,z)+M_{61}(x,z)+{\tilde\phi}_{xx}-{\tilde\phi}+pw^{p-1}{\tilde\phi}\Big]
  \\[1.3mm]
   &\quad+\epsilon^2S_3+\epsilon^2S_4+\epsilon^2S_5+B_2(w)+\beta^{-2}\phi_{1,zz}+\beta^{-2}{\tilde\phi}_{zz}
   +\epsilon(\epsilon^2\beta^{-2}e{''}Z+\lambda_0 eZ)
   \\[1.5mm]
   &\quad+M_{12}(x,z)+M_{22}(x,z)+M_{32}(x,z)+M_{42}(x,z)+M_{52}(x,z)
   \\
   &\quad+M_{62}(x,z)+B_{3}(\phi_3)+B_3({\tilde\phi}).
\end{split}
\end{equation}

\subsubsection{Finding new correction terms and defining the basic approximation}
We now choose ${\tilde\phi}=\phi_4$ in order to eliminate the terms between the first two brackets in (\ref{S(w+phi_1+phi_2+varphi)}). Namely, for fixed $z$, we need a solution to the problem
\begin{equation}
\label{equation of Phi}
\begin{split}
-{\tilde\phi}_{xx}+{\tilde\phi}-pw^{p-1}{\tilde\phi}=\,&\epsilon^2S_6+\epsilon^2S_7+\epsilon^2S_8+\epsilon^2S_9+M_{11}(x,z)+M_{21}(x,z)
\\
&+M_{31}(x,z)+M_{41}(x,z)+M_{51}(x,z)+M_{61}(x,z),
\quad
\forall x\in {\mathbb R}.
\end{split}
\end{equation}
It is well-known that this problem is solvable provided that
\begin{equation}
\label{solvable of varphi1}
\begin{split}
\int_{\Bbb R}\Big[\epsilon^2&S_6+\epsilon^2S_7+\epsilon^2S_8+\epsilon^2S_9+M_{11}(x,z)+M_{21}(x,z)
\\
&\quad+M_{31}(x,z)+M_{41}(x,z)+M_{51}(x,z)+M_{61}(x,z)\Big]w_x\,{\rm d}x \,=\,0.
\end{split}
\end{equation}

\medskip
The computation of (\ref{solvable of varphi1}) can be showed as follows.
Since $S_6$, $S_8$, $M_{11}(x,z)$, $M_{21}(x,z)$, $M_{51}(x,z)$ are even functions, integration against $ w_x$ therefore just vanish.
This gives that
\begin{equation*}
\begin{split}
&\int_{\Bbb R}\Big[\epsilon^2S_6+\epsilon^2S_7+\epsilon^2S_8+\epsilon^2S_9+M_{11}(x,z)+M_{21}(x,z)
\\
&\qquad\qquad+M_{31}(x,z)+M_{41}(x,z)+M_{51}(x,z)+M_{61}(x,z)\Big]w_x\,{\rm d}x \,
 \\
&=\int_{\Bbb R}\Big[\epsilon^2\,S_7\,+\,\epsilon^2\,S_9\,+\,M_{31}(x,z)\,+\,M_{41}(x,z)\,+\,M_{61}(x,z)\,\Big]\,w_x\,{\rm d}x
\\
&\equiv J_1\,+\,J_2\,+\,J_3\,+\,J_4\,+\,J_5.
\end{split}
\end{equation*}
Recalling the expression of $S_7$ in (\ref{sw-gather}), direct computation leads to
\begin{equation}
\label{J1}
\begin{split}
J_{1}\,=\,&
-\epsilon^2\beta^{-1}h{''}\int_{\Bbb R}w_x^2\,{\rm d}x
-2\epsilon^2\beta^{-2}\beta'h{'}\int_{\Bbb R}\big(w_x^2+x w_xw_{xx}\big)\,{\rm d}x
\\[1mm]
&-2\epsilon^2\alpha^{-1}\beta^{-1}\alpha' h'\int_{\Bbb R}w_x^2\,{\rm d}x-\epsilon^2 h\Big(\beta^{-1}k^2\int_{\Bbb R}w_x^2\,{\rm d}x+\beta^{-3}V_{tt}\int_{\Bbb R}xww_x\,{\rm d}x\Big)
\\[1mm]
\,=\,&-\epsilon^2\varrho_1\beta^{-1}h{''}
\,-\,
\epsilon^2\varrho_1\beta^{-1}\big(\beta^{-1}\beta'+2\alpha^{-1}\alpha'\big)h{'}
-\epsilon^2\varrho_1\beta^{-1}\Big(k^2-\sigma \beta^{-2}V_{tt}\Big)h,
\end{split}
\end{equation}
where we have used (\ref{constant1}) and the following relation
\begin{equation}
\label{constant2}
 \varrho_{1}\,\equiv\,\int_{\Bbb R}w_x^2\,{\rm d}x\,=\,-2\int_{\Bbb R}xw_xw_{xx}\,{\rm d}x.
\end{equation}
According to the definition of $S_9$, it follows that
\begin{equation}
\label{J2}
\begin{split}
J_{2}\,=\,&2\,\epsilon^2\,\beta^{-1}\,h'\,\varpi\,\int_{\Bbb R}\Big(xw_{xx}w_x+\frac 12 w_x^2\Big)\,{\rm d}x-2\,\epsilon^2\,\alpha^{-1}\,\alpha'\,\beta^{-1}\,h\,\varpi\,\int_{\Bbb R}w_x^2\,{\rm d}x
\\
&\qquad-2\,\epsilon^2\,\beta'\,\beta^{-2}\,h\,\varpi\,\int_{\Bbb R}\big(w_x^2+xw_{xx}w_x\big)\,{\rm d}x
\\
\,=\,&-\epsilon^2 \varrho_{1}\beta^{-1}\Big[\,\varpi\,\beta'\,\beta^{-1}+2\,\varpi\,\alpha^{-1}\,\alpha'\Big]\,h.
\end{split}
\end{equation}
Since $w_1$ is an odd function and $w_2$ is a even function, we obtain
\begin{equation}
\label{J3}
\begin{split}
J_3=\int_{\Bbb R}\,M_{31}(x,z)w_x\,{\rm d}x
=&\,\epsilon^2\beta^{-1}\sigma^{-1}k^2\int_{\Bbb R}\Big[hw_{2,x}+\sigma^{-1}hxw_{2}+
hw_{1}\Big]w_x\,{\rm d}x
\\
=&\,\epsilon^2\beta^{-1}\sigma^{-1}k^2\,h\int_{\Bbb R}\Big[w_{2,x}w_x+\frac {1}{\sigma}xw_2w_x+w_1w_x\Big]\,{\rm d}x.
\end{split}
\end{equation}
By the definition of $M_{41}(x,z)$, we get
\begin{equation}
\label{J4}
  \begin{split}
    J_4&=\int_{\Bbb R}\,M_{41}(x,z)w_x\,{\rm d}x
   % \\
   % &=\,-2\,\epsilon^2\,h'\,\beta^{-1}\xi(\epsilon z)\int_{\Bbb R}\Big[\epsilon\, A'\big(\mathfrak{a}(\epsilon z)\big)\,\beta (\epsilon z)\,Z_x+\phi_{22,xz}(x,z)\Big]w_x\,{\rm d}x
   % \\
  %  &\quad-2\,\epsilon^2\,h\,\beta^{-1}\,\varpi\,\xi(\epsilon z)\int_{\Bbb R}\Big[\epsilon\, A'\big(\mathfrak{a}(\epsilon z)\big)\,\beta (\epsilon z)\,Z_x+\phi_{22,xz}(x,z)\Big]w_x\,{\rm d}x
%\\
%&\quad-\epsilon^2\,\beta^{-1}\,k\,\xi(\epsilon z)\int_{\Bbb R}\Big[A\big(\mathfrak{a}(\epsilon z)\big)\,Z_x+\phi_{22,x}(x,z)\Big]w_x\,{\rm d}x
%\\
%     &\quad-\epsilon^2\,\sigma^{-1}\,\beta^{-1}\,k\,\xi(\epsilon z)\int_{\Bbb R}\Big[A\big(\mathfrak{a}(\epsilon z)\big)\,Z(x)+\phi_{22}(x,z)\Big]xw_x\,{\rm d}x
     \\
&\,=\,  -\epsilon^2\,  \varrho_{1}\,\beta^{-1}\,\alpha_1(z)\,h'
\,-\,\epsilon^2\, \varrho_{1}\,\beta^{-1}\,\alpha_2(z)\,h
\,+\,\epsilon^2\, \varrho_{1}\,\beta^{-1}\,\big[G_1(z)+G_2(z)\big],
  \end{split}
\end{equation}
 where
\begin{equation}
\label{alpha1}
\alpha_1(z)\,=\,2 \varrho_{1}^{-1}\xi(\epsilon z)\,\int_{\Bbb R}\Big[\epsilon\, A'\big(\mathfrak{a}(\epsilon z)\big)\,\beta (\epsilon z)\,Z_x+\phi_{22,xz}(x,z)\Big] w_x\,{\rm d}x,
\end{equation}
\begin{equation}
\label{alpha2}
 \alpha_2(z)=\varpi(\epsilon z)\,\alpha_1(z),
\end{equation}
\begin{equation}
\label{G1}
 G_1(z)\,= -\frac{k(\epsilon z)}{ \varrho_{1}}\xi(\epsilon z)\,\int_{\Bbb R} \Big[A\big(\mathfrak{a}(\epsilon z)\big)\,Z_x+\phi_{22,x}(x,z)\Big]\,w_x\,{\rm d}x,
\end{equation}
\begin{equation}
\label{G2}
G_2(z)\,= -\frac{k(\epsilon z)}{\sigma \varrho_{1}}\xi(\epsilon z)\,\int_{\Bbb R} \,\Big[A\big(\mathfrak{a}(\epsilon z)\big)\,Z+\,\phi_{22}(x,z)\Big]\,x\,w_x\,{\rm d}x.
\end{equation}
Since $w_1$ and $w_x$ are odd functions, while $w_2$, $\phi_{21}$ and $\phi_{22}$ are even functions, so we obtain that
\begin{equation}
\label{J5}
\begin{split}
  J_5 & =\epsilon^2\,p(p-1)\int_{\Bbb R}\Big[a_{11}a_{12}\,h\,w_1\,w_2
   +a_{11}w_1\,\xi(\epsilon z)(\phi_{21}+\phi_{22})
   \Big]\,w^{p-2}w_x\,{\rm d}x
   \\
    &=\,\epsilon^2\,a_{11}\,a_{12}\,h\int_{\Bbb R}\,p(p-1)w^{p-2}\,w_1\,w_2\,w_x\,{\rm d}x
   \\
   &\qquad+\epsilon^2\,a_{11}\,\xi(\epsilon z)\int_{\Bbb R}p(p-1)w^{p-2}\,w_1\,(\phi_{21}+\phi_{22})
   w_x\,{\rm d}x
   \\
   &\,\equiv\, \epsilon^2\beta^{-1}\sigma^{-1}k^2\,h\int_{\Bbb R}\,p(p-1)w^{p-2}w_1\,w_2\,w_x\,{\rm d}x
+\epsilon^2 \varrho_{1}\,\beta^{-1}G_3(z),  \end{split}
\end{equation}
where
\begin{equation}
\label{G3}
 G_3(z)\,=\,a_{11}\,\beta\, \varrho_{1}^{-1}\,\xi(\epsilon z)\,p(p-1)\int_{\Bbb R}w^{p-2}\,w_1\Big[A\big(\mathfrak{a}(\epsilon z)\big)Z+\,\phi_{22}(x,z)\Big]\,
   w_x\,{\rm d}x.
\end{equation}

\medskip
By  differentiating the equation (\ref{w2}) and using equation (\ref{w1}), we obtain
\begin{equation}
\label{relation}
\int_{\Bbb R}p(p-1)w^{p-2}w_x w_1w_2\,{\rm d}x\,=\,-\int_{\Bbb R}w_xw_1\,{\rm d}x+\int_{\Bbb R}\Big(w_x+\frac 1 \sigma xw\Big)w_{2,x}\,{\rm d}x.
\end{equation}
\noindent
Adding (\ref{J3}), (\ref{J5}) and using (\ref{relation}), we have
\begin{equation}
\label{J3+J5}
\begin{split}
  J_3+J_5\,&=\,\epsilon^2\beta^{-1}\sigma^{-1}k^2h\int_{\Bbb R}\Big[p(p-1)w^{p-2}w_1w_2w_x+w_{2,x}w_x+\frac 1 \sigma xw_2w_x
  \\
  &\qquad\qquad\qquad\qquad\qquad\qquad+w_1w_x\Big]\,{\rm d}x+\epsilon^2 \varrho_{1}\beta^{-1}G_3(z)
  \\
&\,=\,\epsilon^2 \beta^{-1}\sigma^{-1}k^2h\int_{\Bbb R}\Big[2w_{2,x}w_x+\sigma^{-1}x(w_2 w)_x\Big]\,{\rm d}x
+\epsilon^2 \varrho_{1}\beta^{-1}G_3(z)
\\
&\,=\,-\epsilon^2 \varrho_{1}\beta^{-1}\sigma^{-1}k^2h
\,+\,\epsilon^2 \varrho_{1}\,\beta^{-1}G_3(z),
  \end{split}
\end{equation}
where we have used (\ref{stationary}) and the following integral identities
$$
2\int_{\Bbb R}w_{2,x}w_x\,{\rm d}x\,=\,-\big(\frac {2}{p-1}+\frac 12\big)\int_{\Bbb R}w_x^2\,{\rm d}x,
\qquad
\sigma^{-1}\int_{\Bbb R}w_2w\,{\rm d}x\,=\,\big(\frac 12-\frac {2}{p-1}\big)\int_{\Bbb R}w^2_x\,{\rm d}x.
$$
Consequently, we infer that
\begin{equation*}
\begin{split}
 &\int_{\Bbb R}\Big[\epsilon^2S_6+\epsilon^2S_7+\epsilon^2S_8+\epsilon^2S_9+M_{11}(x,z)+M_{21}(x,z)
\\
&\qquad\qquad+M_{31}(x,z)+M_{41}(x,z)+M_{51}(x,z)+M_{61}(x,z)\Big]w_x\,{\rm d}x \,
 \\
 &=-\epsilon^2 \varrho_{1}\beta^{-1}\Big[h''
+\big(\hbar_1(\epsilon z)+\alpha_1(z)\big)h'+(\hbar_2(\epsilon z)+\alpha_2( z))h\Big]
\\
&\qquad\qquad+\epsilon^2 \varrho_{1}\beta^{-1}\Big[G_1(z)\,+\,G_2(z)\,+\,G_3(z)\Big],
 \end{split}
\end{equation*}
where functions $\hbar_1$ and $\hbar_2$ are given by
\begin{equation}
\label{gamma1}
\hbar_1(\theta)\,=\,\beta^{-1}\beta'+2\alpha^{-1}\alpha'\,=\,\sigma V^{-1}V_{\theta},
\end{equation}
and
\begin{align}
\begin{aligned}
\label{gamma2}
\hbar_2(\theta)\,=\,&\,-\sigma V^{-1}V_{tt}+(\sigma^{-1}+1)k^2+\varpi\big(\beta^{-1}\beta'+2\alpha^{-1}\alpha'\big)
\\
\,=\,&\,\sigma V^{-1}\,V_{\theta}\,\Theta_{tt}-\sigma V^{-1}V_{tt}+(\sigma^{-1}+1)k^2.
\end{aligned}
\end{align}
%Since
%\begin{equation}
%\label{solvable of varphi}
%\begin{split}
% &\int_{\Bbb R}\Big[\epsilon^2S_6+\epsilon^2S_7+\epsilon^2S_8+\epsilon^2S_9+M_{11}(x,z)+M_{21}(x,z)
%\\
%&\qquad\qquad+M_{31}(x,z)+M_{41}(x,z)+M_{51}(x,z)+M_{61}(x,z)\Big]w_x\,{\rm d}x=0.
%\end{split}
%\end{equation}

\medskip
The above computation give that the validity of (\ref{solvable of varphi1}) holds
if the following problem
\begin{equation}
\label{equation of h}
  \begin{split}
  h''
+\big(\hbar_1(\epsilon z)+\alpha_1(z)\big)h'+(\hbar_2(\epsilon z)+\alpha_2( z))h& =G_1(z)+G_2(z)+G_3(z),
  \\
   h'(1)+k_2h(1)=0,\qquad\qquad &h'(0)+k_1h(0)=0,
  \end{split}
\end{equation}
 has a solution. In fact, the solvability of problem (\ref{equation of h}) will be established in Lemma \ref{lemma of f}. Moreover, $h$ has the following estimate
$$
 \|h\|_{*}\leq C\e^{\frac{1}{2}}.
$$
where the norm $\|\cdot\|_*$ is given in (\ref{fnorm}).

\medskip
We can now find a function defined by ${\tilde\phi}=\phi_4(x,z)$, such that the terms between brackets in (\ref{S(w+phi_1+phi_2+varphi)}) disappear.
More precisely,  the solution has the form
$$
\phi_4(x,z)=\epsilon^2\phi_{41}({x,\epsilon}z)+\epsilon^2\phi_{42}(x,{\epsilon}z),
$$
where $\phi_{41}\big(x, \theta\big)$ and  $\phi_{42}\big(x, \theta\big)$ only depend on $ f(\theta), e(\theta) $, but their derivatives.
The function $\epsilon^2\phi_{41}(x, \theta)$ satisfies the following equation, for fixed $\theta=\e z$
\begin{equation*}
  \begin{split}
    &-\phi_{xx}+\phi-pw^{p-1}\phi
    \\
    &=\epsilon^2S_6+\epsilon^2S_8+M_{11}+M_{21}+M_{51}
    \,+\,\frac{\epsilon^2k^2}{\beta\sigma}\Big[\,\frac{\sigma}{\beta} w_{1,x}+\frac{1}{\beta}xw_1+\frac{\beta}{\sigma}h^2w_2+\frac{2\beta}{\sigma} fhw_2\,\Big]
    \\
    &\quad+\frac{2\epsilon^2}{\beta^2}\,\xi\,\Big(\frac{\beta'}{\beta}-\varpi\Big)
    \Big[\,\epsilon\, A'(\mathfrak{a})\,\beta \,x\, Z_x+x \,\phi_{22,xz}\,\Big]
    \,-\,\epsilon^2\, \sigma^{-1}\,k\,(f+h)\,\xi\,\Big[\,A(\mathfrak{a})\,Z+\phi_{22}\,\Big]
    \\
    &\quad+\frac{\epsilon^2}{\beta^2}\,\xi\,\Big(\frac{2 \alpha'}{\alpha}-\varpi\Big)
    \Big[\,\epsilon\, A'(\mathfrak{a})\,\beta\, Z+\phi_{22,z}\,\Big]
    \\
    &\quad+\epsilon^2\frac {p(p-1)}{2}\,w^{p-2}\Big[\, a_{11}^2\,w_{1}^2+a_{12}^2\,(2\,f\,h+h^2)\,w_{2}^2
    +e^2\,Z^2+\xi^2\,\big(A(\mathfrak{a})\,Z+\phi_{22}\big)^2
    \\
    &\qquad\qquad\qquad\qquad\quad\ +2 \,a_{12}\,(f+h)\,w_2\,\xi \, \big(A(\mathfrak{a})\,Z+\phi_{22}\big)
    \\
    &\qquad\qquad\qquad\qquad\quad\ +2\,a_{12}\,(f+h)\,w_2\, e\,Z+2\,\xi \big(A(\mathfrak{a})Z+\phi_{22}\big)\,e\, Z\, \Big],
  \end{split}
\end{equation*}
and $\phi_{42}(x,\theta)$ satisfies
\begin{equation*}
  \begin{split}
    &-\phi_{xx}+\phi-pw^{p-1}\phi
    \\
    &=S_7+S_9+\frac{k^2}{\beta\sigma}\Big[\,h\,w_{2,x}+\sigma^{-1}\,h\,x\,w_2
    +h\,w_1\,\Big]
    -\frac{k}{\beta}\,\xi\,\Big[A(\mathfrak{a})\,Z_x+\phi_{22,x}\Big]
    \\
    &\quad-\frac{2}{\beta}\,h'\,
    \xi\,\Big[\,\epsilon \,A'(\mathfrak{a})\,\beta \,Z_x+\phi_{22,xz}\,\Big]
    -\frac{2\varpi}{\beta}\,h\,\xi
    \Big[\,\epsilon\, A'(\mathfrak{a})\,\beta \,Z_x+\phi_{22,xz}\,\Big]
    \\
    &\quad
    -\sigma^{-1}\,\beta^{-1}\,k\,x\,\xi\,\Big[\,A(\mathfrak{a})\, Z+\phi_{22}\,\Big]
    +p(p-1)\,w^{p-2}\,\Big[\,a_{11}\,a_{12}\,h\,w_1\,w_2+a_{11}\,w_1\,\xi \, \big(A(\mathfrak{a})Z+\phi_{22}\big)\,\Big].
   \end{split}
\end{equation*}
Finally, our basic approximate solution $v$ in (\ref{vdefine1}) to the problem near the curve $\Gamma_\epsilon$ is
\begin{equation}
\label{basic approximate}
v_5(x, z)\,=\,w(x)+\phi_1(x, z)+\phi_2(x, z)+\epsilon\,e(\e z)\, Z(x)+\phi_3(x, z)+\phi_4(x, z).
\end{equation}

\subsection{The global approximate solution and the errors}

\medskip
Recall the relation (\ref{rescaling}),  the coordinates $(t, \theta)$ in (\ref{Fermicoordinates}), $(s, z)$ in  (\ref{szcoordinate}) and
$(x ,z)$ in (\ref{vdefine1})  and also the local approximate solution $v_{5}(x, z)$ in (\ref{basic approximate}), which is constructed near the curve $\Gamma_\epsilon$ in the coordinates $(x, z)$.
By the relation in (\ref{vdefine1}),
we then make an extension and simply define the approximate solution to (\ref{problemafterscaling}) in the form
\begin{equation}
\label{globalapproximation}
{\mathbf W}(\tilde y)\,=\,\eta_{3\delta}^{\epsilon}(s)\,\alpha(\epsilon z)\,
v_5\Big(\beta(\e z)\big(s-\epsilon\,f(\e z)-\epsilon\,h(\e z)\big),\, z\Big).
\end{equation}
Note that, in the coordinates $(\tilde {y}_1,\tilde {y}_2)$ introduced in (\ref{rescaling}), ${\mathbf W}$ is a function defined on $\Omega_{\epsilon}$ which is extended globally as $0$ beyond the $6\delta/\epsilon$-neighborhood of $\Gamma_\epsilon$ and also satisfies (\ref{decayw}).

\medskip
Recalling (\ref{errorrelationinterior}), (\ref{errorrelationboundary0}) and (\ref{errorrelationboundary1}),
the local forms of the error terms can now be formulated.
The interior error ${\mathcal E}$ can be arranged as follows
\begin{align}
\begin{aligned}
{\mathcal E}\,\equiv\,&S(v_5)=S(w+\phi_1+\phi_2+\epsilon eZ+\phi_3+\phi_4)
\\
\,=&\,\epsilon^2\,S_3+\epsilon^2\,S_4+\epsilon^2\,S_5+B_2(w)+\epsilon^3\,\beta^{-2}\,e{''}\,Z+\epsilon\, \lambda_0\, e\,Z
\label{new error-2}
\\
\,&+\beta^{-2}\,\phi_{1,zz}+\beta^{-2}\,\phi_{4,zz}+M_{12}(x,z)+M_{22}(x,z)+M_{32}(x,z)
\\
\,&+M_{42}(x,z)+M_{52}(x,z)+M_{62}(x,z)+B_3(\phi_3)+B_3(\phi_4),
\end{aligned}
\end{align}
where we have used (\ref{S(w+phi_1+phi_2+varphi)}) and the equation of $\phi_4$ in (\ref{equation of Phi}).
The boundary error term $g_0$ has the form
 \begin{align}
g_0(x)
\,=&\,-\epsilon\beta\,\big(k_1\,f+f'\big)\,w_x
-\epsilon\, b_5\,x\,\Big[\epsilon\,a_{11}\,w_{1,x}
+\epsilon\, e \,Z_x+\epsilon
 ^2\,\hat{\phi}_x+\phi_{4,x}\Big]
 \nonumber
 \\
  &-\epsilon\,\beta\,\big(k_1\,f+f'\big)\times\Big\{
 \epsilon \,a_{12}\,(f+h)\,w_{2,x}+\epsilon\, e \,Z_x+\epsilon
 ^2\,\hat{\phi}_x
  \nonumber
\\
&\qquad\qquad\qquad\qquad\qquad+\epsilon\,\beta^{-1}\,\big(A(0)\,Z_x+\phi_{22,x}(x,0)\big)+\phi_{4,x}\Big\}
 \nonumber
 \\
 &+\epsilon\,\alpha'\,\alpha^{-1}\Big\{\epsilon\,a_{11}\,w_1+\epsilon\,e \, Z+\epsilon^2\,\hat{\phi}+\phi_4\Big\}+\epsilon^2\,k\,\alpha'\,\alpha^{-1}\,\beta^{-1}\,x\,w
  \nonumber
 \\
&+\epsilon^2\,\frac{k\,\alpha'}{\alpha}\Big(\frac{x}{\beta}+f+h\Big)\times\Big[\epsilon\, a_{11}\,w_1+\epsilon\, a_{12}\,(f+h)\,w_2
 \nonumber
\\
& \qquad\qquad\qquad\qquad\qquad+\epsilon\,\beta^{-1}\Big( A(0)\,Z+\phi_{22}(x,0)\Big)+\epsilon\, e \, Z+\epsilon^2\,\hat{\phi}+\phi_{4}\Big]
 \nonumber
 \\
 &-\epsilon^2\Big\{b_1(\beta^{-2}x^2 +f^2+h^2+2fh)\beta-k\Big[\beta^{-2}\beta'x^2-(f+h)(f'+h')\beta\Big]\Big\}w_x
  \nonumber
 \\
 &-\epsilon^3\,\Big[b_1\,\Big(\frac x \beta +f+h\Big)^2\, \beta-k\,\Big(\frac x \beta +f+h\Big)\Big(\frac{\beta'}{\beta}\,x-\beta \,f'-\beta\, h'\Big)\Big]
 \label{g0}
 \\
 &\times \Big[a_{11}w_{1,x}+a_{12}(f+h)w_{2,x}+\epsilon \beta^{-1}\big(A(0)Z_x+\phi_{22,x}\big)+e Z_x+\epsilon\hat{\phi}_x+\epsilon^{-1}\phi_{4,x}\Big]
  \nonumber
 \\[1.2mm]
 &+\epsilon^2\,a'_{11}w_1+\epsilon^2\,e'Z
 +\epsilon^2\hat{\phi}_z+\phi_{4,z}+\epsilon^2\, k\,\beta^{-2}x\big(\epsilon A'(0)\,\beta(0)\,Z+\phi_{22,z}(x,0)\big)
  \nonumber
 \\[1.2mm]
 &+\epsilon\, k\,\Big(\frac{x}{\beta}+f+h\Big)\times \Big[\epsilon^2\, a'_{11}\,w_1+\epsilon^2\, a'_{12}\,(f+h)\,w_2
  \nonumber
 \\
 &\qquad\qquad\qquad\qquad\qquad\quad+\epsilon^2\, a_{12}\,(f'+h')\,w_2+\epsilon^2\,e'\,Z+\epsilon^2\hat{\phi}_z+\phi_{4,z}\Big]
  \nonumber
  \\
 &-\epsilon^2\,b_2\,\Big(\frac{x}{\beta}+f+h\Big)^2\times\Big[\epsilon^2\, a'_{12}\,w_1+\epsilon^2\, a'_{12}\,(f +h)\,w_2+\epsilon^2\, a_{12}\,(f'+h')\,w_2
  \nonumber
 \\
& \qquad\qquad+\epsilon\,\beta^{-1}\big(\epsilon \, A'(0)\,\beta(0)\,Z+\phi_{22,z}(x,0)\big)+ \epsilon^2 \,e'\,Z+\epsilon^2\hat{\phi}_z+\,\phi_{4,z}\Big]
\nonumber
\\
&+D_0^{0}(v_5).
 \nonumber
 \end{align}
The term $g_1$  has a similar expression.

\medskip
We decompose
\begin{equation}
\label{E1-d}
{\mathcal E}\,=\,{\mathcal E}_{11}+{\mathcal E}_{12},\quad g_0\,=\,g_{01}+g_{02},\quad g_1\,=\,g_{11}+g_{12},
\end{equation}
with
\begin{equation}
\label{E11}
\begin{split}
{\mathcal E}_{11}\,=\,\epsilon^3\,\beta^{-2}\,e{''}\,Z+\epsilon\, \lambda_0\, e\,Z,
\qquad &{\rm and}\qquad
{\mathcal E}_{12}\,=\,{\mathcal E}-{\mathcal E}_{11},
\\
g_{01}\,=\,-\epsilon\,\Big[k_1\,\beta(0)\,f+\beta(0)\,f'\Big]\,w_x+\epsilon^2\, e'\,Z,
\qquad &{\rm and}\qquad
g_{02}\,=\,g_0-g_{01},
\\
g_{11}\,=\,-\epsilon\,\Big[k_2\,\beta(1)\,f+\beta(1)\,f'\,\Big]\,w_x+\epsilon^2\, e'\,Z,
\qquad &{\rm and}\qquad g_{12}\,=\,g_1-g_{11}.
\end{split}
\end{equation}

\medskip
For further references, it is useful to estimate the $L^2(\mathcal S)$ norm of ${\mathcal E}$.  From the uniform bound of $e$ in (\ref{enorm}), it is easy to see that
\begin{equation}
\label{E11norm}
\|{\mathcal E}_{11}\|_{L^2(\mathcal S)}\,\leq\, C\epsilon^{1/2}.
\end{equation}
Since $\phi_1, \phi_2$ and $\epsilon eZ$ are of size $O(\epsilon)$,  all terms in ${\mathcal E}_{12}$ carry $\epsilon^2$ in front. We claim that
\begin{equation}
\label{E12norm}
\|{\mathcal E}_{12}\|_{L^2(\mathcal S)}\,\leq\, C\epsilon^{ 3/2}.
\end{equation}
A rather delicate term in ${\mathcal E}_{12}$ is the one carrying $f{''}$ since we only assume a uniform bound on $\|f{''}\|_{L^2(0,1)}$.  For example, we have a term $K_1=\epsilon^2f{''}$ in $S(w)$ which has bound like
$$
\|K_1\|_{L^2(\mathcal S)}\,\leq\, C\epsilon^2.
$$
Since
\begin{equation*}
 \begin{split}
 &\big|N_0(\phi_1+\phi_2+{\epsilon}e z+\phi_3+\phi_4)\big|\,
 \\
 &=\,\big|(w+\phi_1+\phi_2+{\epsilon}e z+\phi_3+\phi_4)^p-w^p-pw^{p-1}(\phi_1+\phi_2+{\epsilon}e z+\phi_3+\phi_4)\big|\,
 \\
 &=\,\big|p\big[w+t(\phi_1+\phi_2+{\epsilon}e z+\phi_3+\phi_4)\big]^{p-2}(\phi_1+\phi_2+{\epsilon}e z+\phi_3+\phi_4)^2\big|,
 \end{split}
\end{equation*}
we obtain
$$
\big\|N_0(\phi_1+\phi_2+{\epsilon}e z+\phi_3+\phi_4)\big\|_{L^2(\mathcal S)}\,\leq\, C\epsilon^{3/2}.
$$
Other terms can be estimated in the similar way.
Similarly, we have the following estimate
\begin{equation}
\label{g02norm}
\|g_{02}\|_{L^2(\Bbb R)}+\|g_{12}\|_{L^2(\Bbb R)}\,\leq\, C\epsilon^{3/2}.
\end{equation}

\medskip
Moreover, for the Lipschitz dependence of the term of error ${\mathcal E}_{12}$ on the parameters $f$ and $e$ for the norm defined in (\ref{fnorm}) and (\ref{enorm}), we have the validity of the estimate
\begin{equation}
\label{E12L}
\|{\mathcal E}_{12}(f_1,e_1)-{\mathcal E}_{12}(f_2,e_2)\|_{L^2(\mathcal S)}
\,\leq\, C\epsilon^{ 3/2}\big[\,\|f_1-f_2\|_{*}+\|e_1-e_2\|_{**}\,\big].
\end{equation}
Similarly, we obtain
\begin{equation}
\label{g02L}
\begin{split}
\|g_{02}(f_1,e_1)-g_{02}(f_2,e_2)\|_{L^2(\Bbb R)}\,+\,\|g_{12}\,&(f_1,e_1)-g_{12}(f_2,e_2)\|_{L^2(\Bbb R)}
\\
\,\leq\, &\,C\epsilon^{3/2}\big[\,\|f_1-f_2\|_{*}+\|e_1-e_2\|_{**}\,\big].
\end{split}
\end{equation}

\section{The resolution theory for the projected problem}
\label{section5}

\subsection{The invertibility of ${\mathcal L}$}\label{theinvertibility}
\setcounter{equation}{0}

Let ${\mathcal L}$, ${\mathcal D}_1$ and ${\mathcal D}_0$ be the operators defined in $H^2(\mathcal S)$ by (\ref{Lmathcal})-(\ref{D0mathcal}),  and also $ g_0$, $g_1$ be the functions in (\ref{g0}). Note that the function $\chi(\epsilon|x|)$ is even in the definition of ${\mathcal L}$.  In this section, we study the following linear problem:
for given $\mathfrak{h}\in L^2(\mathcal S),\,g_0,\,g_1\in L^2(\Bbb R)$,  finding functions $\phi\in H^2(\mathcal S), c, d\in L^2(0,1)$ and constants $l_1, l_0, m_1, m_0$ such that
\begin{equation}
\label{invert1}
{\mathcal L}(\phi)\,=\,\mathfrak{h}\,+\,c(\epsilon z)\,\chi(\epsilon|x|)\, w_x\,+\,d(\epsilon z)\,\chi(\epsilon|x|)\, Z\quad{\rm in}~\mathcal S,
\end{equation}
\begin{equation}
\label{invert2}
{\mathcal D}_1(\phi)\,=\,\eta^\epsilon_{\delta}(s)\, g_1\,+\,l_1\,\chi(\epsilon|x|)\, w_x\,+\,m_1\,\chi(\epsilon|x|)\, Z\quad{\rm on}~\partial_1 \mathcal S,
\end{equation}
\begin{equation}
\label{inbert3}
{\mathcal D}_0(\phi)\,=\,\eta^\epsilon_{\delta}(s)\,  g_0\,+\,l_0\,\chi(\epsilon|x|)\, w_x\,+\,m_0\,\chi(\epsilon|x|)\, Z\quad{\rm on}~\partial_0\mathcal S,
\end{equation}
\begin{equation}
\label{invert4}
\int_{\Bbb R}\phi(x,z)\,w_x(x)\,{\rm d}x \,=\,0,
\qquad 
\int_{\Bbb R}\phi(x,z)\,Z(x)\,{\rm d}x \,=\,0,
\qquad 
0<z<\frac 1 \epsilon.
\end{equation}

\begin{proposition}
 \label{proposition 7.1}If $\delta$ in the definition of ${\mathcal L}$ is chosen small enough and $\mathfrak{h}\in L^2(\mathcal S)$, there exists a constant $C>0$,  independent of $\epsilon$,  such that for all small $\epsilon$,  the problem (\ref{invert1})-(\ref{invert4}) has a unique solution $\phi={\mathbf T}(\mathfrak{h})$ which satisfies
\begin{equation}
\label{proposition6.1-1-innter equation}
\|\phi\|_{H^2(\mathcal S)} \,\leq\, C\big(\|\mathfrak{h}\|_{L^2(\mathcal S)}+\|g_{02}\|_{L^2(\Bbb R)}+ \|g_{12}\|_{L^2(\Bbb R)} \big),
\end{equation}
Moreover, if $h,g_0,g_1$ have compact supports contained in $|x|\leq 20\delta/\epsilon$,  then
\begin{equation}
\label{proposition6.1-2-inter equation}
|\phi(x,z)|+|\nabla \phi(x,z)|\,\leq\, \|\phi\|_{L^{\infty}}e^{-2\delta/\epsilon}\ \ \ \  \
~{\rm for}~|x|>40\delta/\epsilon.
\end{equation}
\end{proposition}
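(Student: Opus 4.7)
The plan is to establish the a priori estimate \eqref{proposition6.1-1-innter equation} by a contradiction/blow-up argument, deduce existence and uniqueness from the Fredholm alternative, and obtain the exterior decay \eqref{proposition6.1-2-inter equation} by a barrier argument. A preliminary observation simplifies the problem: by \eqref{E11}, the components $g_{01}, g_{11}$ are already of the form $a(z) w_x + b(z) Z$, hence they can be absorbed into the Lagrange multiplier terms $l_i\chi w_x + m_i\chi Z$ in \eqref{invert2}--\eqref{inbert3}, so only $g_{02}, g_{12}$ need to be controlled by the estimate.

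Suppose for contradiction that there exist $\epsilon_n\to 0$ and solutions $(\phi_n, c_n, d_n, l^n_i, m^n_i)$ to \eqref{invert1}--\eqref{invert4} with $\|\phi_n\|_{H^2(\mathcal S)}=1$ but $\|\mathfrak h_n\|_{L^2}+\|g_{02,n}\|_{L^2}+\|g_{12,n}\|_{L^2}\to 0$. I would first test \eqref{invert1} against $\chi(\epsilon|x|)w_x(x)$ and $\chi(\epsilon|x|)Z(x)$ and integrate in $x$. Using the orthogonality conditions \eqref{invert4} together with $L_0 w_x=0$ and $L_0 Z=\lambda_0 Z$, where $L_0=\partial_{xx}-1+pw^{p-1}$, the leading contributions from $\mathcal L\phi_n$ collapse into $z$-derivatives of $x$-projections of $\phi_n$, while the perturbations $\chi B_3$ and $p(\beta^{-2}\chi\mathbf W^{p-1}-w^{p-1})$ are $O(\epsilon)$ (thanks to the factor $\epsilon$ in $B_3$ and to $\beta^{-2}\alpha^{p-1}=1$ together with $v_5=w+O(\epsilon)$). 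Combining this with analogous tests of \eqref{invert2}--\eqref{inbert3} against $w_x, Z$ yields an invertible $2\times 2$ ODE system for $(c_n,d_n)$ with boundary data $(l^n_i,m^n_i)$, and gives $\|c_n\|_{L^2}+\|d_n\|_{L^2}+\sum_i(|l^n_i|+|m^n_i|)\to 0$.

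The heart of the argument is the blow-up step. Since $\|\phi_n\|_{H^2(\mathcal S)}=1$, one can choose $z_n\in[0,1/\epsilon_n]$ so that a fixed fraction of the mass sits in $\mathbb R\times[z_n{-}M, z_n{+}M]\cap\mathcal S$; local elliptic regularity combined with the uniform coercivity of $-\partial_{xx}+1-pw^{p-1}(x)$ at $|x|\to\infty$ (yielding exponential $x$-decay independent of $n$) produces, after translating $\tilde\phi_n(x,z)=\phi_n(x,z+z_n)$, a weak $H^2_{\mathrm{loc}}$ limit $\phi_\infty\not\equiv 0$. Depending on whether $\epsilon_n z_n$ converges to an interior point, to $0$, or to $\ell$, this limit is defined on $\mathbb R\times\mathbb R$ or on a half-plane; in every case all lower-order terms in $\mathcal D_0,\mathcal D_1$ carry at least one factor of $\epsilon$ and disappear, so $\phi_\infty$ satisfies
\begin{equation*}
\beta_\star^{-2}\phi_{\infty,zz}+\phi_{\infty,xx}-\phi_\infty+pw^{p-1}\phi_\infty=0,\qquad \int_{\mathbb R}\phi_\infty w_x\,\mathrm dx=\int_{\mathbb R}\phi_\infty Z\,\mathrm dx=0,
\end{equation*}
with a homogeneous Neumann condition $\phi_{\infty,z}=0$ on the boundary in the half-plane cases. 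Expanding $\phi_\infty(\cdot,z)=\sum_k\alpha_k(z)\psi_k(x)$ in the spectral basis of $L_0$, whose non-negative discrete spectrum is exactly $\{0,\lambda_0\}$ with eigenfunctions $w_x$ and $Z$ (the remaining spectrum being strictly negative), the orthogonality kills the $w_x$- and $Z$-modes while each remaining mode satisfies $\beta_\star^{-2}\alpha_k''+\lambda_k\alpha_k=0$ with $\lambda_k<0$; together with the Neumann/$L^2$ conditions on the unbounded $z$-domain this forces $\alpha_k\equiv 0$. Hence $\phi_\infty\equiv 0$, a contradiction.

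Granted \eqref{proposition6.1-1-innter equation}, uniqueness is immediate (apply the estimate to the difference of two solutions) and existence follows from the Fredholm alternative on the closed subspace of $H^2(\mathcal S)$ cut out by \eqref{invert4}, viewing $\mathcal L$ as a compact perturbation of $\beta^{-2}\partial_{zz}+\partial_{xx}-1$. The exterior decay \eqref{proposition6.1-2-inter equation} follows from a barrier argument: for $|x|>20\delta/\epsilon$ both cut-offs vanish, the RHS is zero, and the operator reduces to $\beta^{-2}\partial_{zz}+\partial_{xx}-1$, so the function $\|\phi\|_{L^\infty}\,e^{-c_0(|x|-20\delta/\epsilon)}$ is a supersolution dominating $\phi$ on the complement via the maximum principle. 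The principal obstacle is the blow-up/spectral step, where one must identify the correct limit problem (interior versus boundary concentration), verify that the variable coefficients freeze to constants and that the perturbations $\chi B_3$ and $\chi(\beta^{-2}\mathbf W^{p-1}-w^{p-1})$ do not survive the rescaling, and carefully rule out the zero-eigenvalue mode $w_x$ on an unbounded half-line using the orthogonality constraint.
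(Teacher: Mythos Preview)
Your approach is correct in outline but genuinely different from the paper's. The paper does not run a blow-up argument at all. Instead, after the same preliminary observation (absorbing $g_{01},g_{11}$ into the multipliers), it performs the change of variable $\tilde z=\Upsilon(z)=\epsilon^{-1}\int_0^{\epsilon z}\beta$, which converts $\beta^{-2}\phi_{zz}+\phi_{xx}$ into the flat Laplacian $\Delta_{x,\tilde z}$. The transformed problem then reads $\Delta\varphi-\varphi+pw^{p-1}\varphi=-B_4(\varphi)+\cdots$, where the leftover operator $B_4$ collects $\chi\tilde B_3$, the term $p(\tilde\beta^{-2}\chi\tilde{\mathbf W}^{p-1}-w^{p-1})$, and $\epsilon\widetilde{\beta'}\tilde\beta^{-2}\varphi_{\tilde z}$. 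The key point is that $\|B_4(\varphi)\|_{L^2}\le C\delta\|\varphi\|_{H^2}$ (not $O(\epsilon)$: terms such as $\epsilon(\beta'/\beta)x\,\phi_{xz}$ are only $O(\delta)$ on the support of $\chi(\epsilon|x|)$). With $\delta$ small this is a contraction-size perturbation of the model operator, and the paper simply invokes Lemma~2.5 of \cite{wei-yang} for the model projected problem on $\hat{\mathcal S}$. This explains transparently why the hypothesis ``$\delta$ small enough'' appears in the statement.

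Your contradiction/blow-up route is a legitimate alternative and is more self-contained (it does not rely on the cited lemma), but two points deserve care. First, the perturbations are $O(\delta)$, not $O(\epsilon)$, in operator norm; in your blow-up they still vanish because you localize at bounded $x$, but you should say this and note that the exponential $x$-decay you use to justify that localization already requires $\delta$ small (so that $\chi B_3$ does not spoil the far-field coercivity). Second, your limit endpoint ``$\epsilon_n z_n\to\ell$'' should be $\epsilon_n z_n\to 1$; the constant $\ell$ only appears after the $\tilde z$-change of variable, which you do not make. With these adjustments your sketch goes through; the paper's route is shorter precisely because it offloads the spectral/Fourier work to the cited lemma.
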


\medskip
\noindent {\it Proof.}
Note that $\eta^\epsilon_{\delta}(s)\,g_{11}$ and $\eta^\epsilon_{\delta}(s)\,g_{01}$ can be absorbed by $l_1\chi\, w_x+m_1\chi\, Z$ and $l_0\chi\, w_x+m_0\chi\, Z$, the problem (\ref{invert1})-(\ref{invert4}) can be written as
\begin{equation}
\label{invert11}
{\mathcal L}(\phi)\,=\,\mathfrak{h}\,+\,c(\epsilon z)\,\chi(\epsilon|x|) \,w_x\,+\,d(\epsilon z)\,\chi(\epsilon|x|)\, Z\quad{\rm in}~\mathcal S,
\end{equation}
\begin{equation}
\label{invert21}
{\mathcal D}_1(\phi)\,=\,\eta^\epsilon_{\delta}(s)\, g_{12}+l_1\,\chi(\epsilon|x|)\, w_x\,+\,m_1\,\chi(\epsilon|x|)\, Z\quad{\rm on}~\partial_1 \mathcal S,
\end{equation}
\begin{equation}
\label{inbert31}
{\mathcal D}_0(\phi)\,=\,\eta^\epsilon_{\delta}(s)\,  g_{02}\,+\,l_0\,\chi(\epsilon|x|)\, w_x\,+\,m_0\,\chi(\epsilon|x|)\, Z\quad{\rm on}~\partial_0\mathcal S,
\end{equation}
\begin{equation}
\label{invert41}
\int_{\Bbb R}\phi(x,z)w_x(x)\,{\rm d}x \,=\,0,
\qquad 
\int_{\Bbb R}\phi(x,z)Z(x)\,{\rm d}x \,=\,0,
\quad 0<z<\frac 1 \epsilon.
\end{equation}
We will reduce problem (\ref{invert11})-(\ref{invert41}) to a small perturbed problem in which Lemma 2.5 in \cite{wei-yang} is applicable. We will achieve this by introducing a change of variables that eliminates the weight $\beta^{-2}$ in front of $\phi_{zz}$.

\medskip
We let
$$
\phi(x,z)\,=\,\varphi(x, {\tilde z})
\quad\mbox{with}\quad 
{\tilde z}={\Upsilon}(z),
$$
where  the map
${\Upsilon}: [0,\frac 1\epsilon)\rightarrow [0,\frac {{\ell}}{\epsilon})$
is a diffeomorphism defined in (\ref{mathfrak-a-function}) with the constant ${\ell}$ given in (\ref{ellnumber}).
We then have
$$
\phi_z
\,=\,\beta(\epsilon z)\varphi_{{\tilde z}},
\qquad
\phi_{zz}
\,=\,\beta^2(\epsilon z)\varphi_{{\tilde z}{\tilde z}}\,+\,\epsilon \beta'(\epsilon z)\varphi_{{\tilde z}},
$$
while the differentiation in $x$ does not change.
Similarly, for any function $g$, we set 
$$
\tilde{g}(x,{\tilde z})\,=\,g(x,\, {\Upsilon}^{-1}({\tilde z})).
$$
The equation in terms of $\varphi$ now reads
\begin{equation}
\label{equation of varphi}
\begin{split}
\Delta \varphi-\varphi+p\,w^{p-1}\varphi
\,=\,&-p\,\big({\tilde\beta}^{-2}\chi(\epsilon|x|)
{\tilde{\mathbf W}}^{p-1}-w^{p-1}\big)\varphi
\,-\,\chi(\epsilon|x|)\, {\tilde B}_3(\varphi)
\,-\,\epsilon\, {\widetilde{\beta'}}{\tilde\beta}^{-2}\,\varphi_{{\tilde z}}
\\
&\,+\,\tilde {\mathfrak{h}}\,+\,\tilde{c}(\epsilon {\tilde z})\,\chi(\epsilon|x|)\, w_x
+\tilde {d}(\epsilon {\tilde z})\,\chi(\epsilon|x|)\, Z
\qquad
{\rm in }~\hat {\mathcal S},
\end{split}
\end{equation}
with the boundary conditions
\begin{equation}
\label{boundary-prior-1}
\begin{split}
&\varphi_{{\tilde z}}\,=\,\beta^{-1}\Big[\,
{\mathbb G}_1(\varphi)
\,-\,l_1\,\chi(\epsilon|x|)\, w_x
\,-\,m_1\,\chi(\epsilon|x|)\, Z\,\Big]~~~~{\rm on }~~\partial_1\hat{\mathcal S},
\\
&\varphi_{{\tilde z}}\,=\,\beta^{-1}\Big[\,
{\mathbb G}_0(\varphi)
\,-\,l_0\,\chi(\epsilon|x|)\, w_x
\,-\,m_0\,\chi(\epsilon|x|) \,Z\,\Big]~~~~{\rm on }~~\partial_0\hat{\mathcal S},
\end{split}
\end{equation}
where
$$
{\mathbb G}_1(\varphi)\,=\,-\widetilde{\eta^\epsilon_{\delta}} \,\tilde {g}_{12}+\chi(\epsilon|x|) \,{\tilde D}_3^1(\varphi)+\chi(\epsilon|x|) \, \tilde{D}_0^1(\varphi),
$$
$$
{\mathbb G}_0(\varphi)\,=\,-\widetilde{\eta^\epsilon_{\delta}}\,\tilde {g}_{02}+\chi(\epsilon|x|)\, {\tilde D}_3^0(\varphi)+\chi(\epsilon|x|)\, \tilde{D}_0^0(\varphi),
$$
and the orthogonality condition
\begin{equation}
\label{orthogonality-prior}
\m \varphi(x,{\tilde z})w_x(x){\rm d}x\,=\,\m \varphi(x,{\tilde z})Z(x){\rm d}x\,=\,0,~~~0<{\tilde z}<\frac {{\ell}}{\epsilon}.
\end{equation}
The operators $\tilde {B}_3$, $\tilde{D}_3^i$, $\tilde{D}_0^i$ are defined by using the above formulas to replace the $z$-derivatives by ${\tilde z}$-derivatives and the variable $z$ by ${\Upsilon}^{-1}({\tilde z})$ in the operators $B_3$,  $D_3^i$, $D_0^i$.

\medskip
The key point is the following: the operator
$$
B_4(\varphi)\,=\,p\,
\big({\tilde\beta}^{-2}\,\chi(\epsilon|x|)\,{\tilde{\mathbf W}}^{p-1}-w^{p-1}\big)\varphi
\,+\,\chi(\epsilon|x|)\, \tilde{B}_3(\varphi)
\,+\,\epsilon \widetilde{\beta'}{\tilde\beta}^{-2}\varphi_{{\tilde z}},
$$
is small in the sense that
$$
\|B_4(\varphi)\|_{L^2(\hat {\mathcal S})}\,\leq\, C \delta \|\varphi\|_{H^2(\hat {\mathcal S})}.
$$
\noindent This last estimate is a rather straightforward consequence of the fact that $| s|\leq 20 \delta \epsilon^{-1}$ wherever the operator $\tilde {B}_3$ is supported, and the other terms are even smaller when $\epsilon$ is small.  Similar results hold for  ${\mathbb G}_1(\varphi)$ and ${\mathbb G}_0(\varphi)$.
Thus by reducing $\delta$ if necessary, we apply the invertibility result of Lemma 2.5 in \cite{wei-yang}.~~The result thus follows by transforming the estimate for $\varphi$ into similar one for $\phi$ via change of variables. This concludes the proof. \qed

 \subsection{Solving the nonlinear projection problem}\label{solvingthenonlinearprojectionproblem}

 In this section, we will solve (\ref{projectedproblem1})-(\ref{projectedproblem4}) in $\mathcal S$.  A first elementary, but crucial observation is that: The term
 $$
 {\mathcal E}_{11}\,=\,\big(\epsilon^3\, \beta^{-2}\,e{''}\,+\epsilon \,\lambda_0 \,e\big) \,Z,
 $$
 in the decomposition of ${\mathcal E}$,  has precisely the form $d(\epsilon z)Z$ and can be absorbed in that term $\chi(\e|x|)\, d(\epsilon z)\, Z$.
 Then, the equivalent equation of (\ref{projectedproblem1}) is
 $$
 {\mathcal L}(\phi)\,=\,-\eta^\epsilon_{\delta}(s)\,{\mathcal E}_{12}
 \,-\,\eta^\epsilon_{\delta}(s)\,{\mathcal N}(\phi)
 \,+\,c(\epsilon z)\,\chi(\e|x|)\,w_{x}
 \,+\,d(\epsilon z)\,\chi(\e|x|)\,Z.
 $$
 Similarly, we can also absorb the terms of order $O(\epsilon)$ in $g_0$ and $g_1$.

\medskip
 Let ${\mathbf T}$ be the bounded operator defined by Proposition \ref{proposition 7.1}. Then the problem (\ref{projectedproblem1})-(\ref{projectedproblem4}) is equivalent to the following fixed point problem
 \begin{equation}
 \label{section-5-1-fixed}
 \phi\,=\,{\mathbf T}\big(-\eta^\epsilon_{\delta}(s)\,{\mathcal E}_{12}
 \,-\,
 \eta^\epsilon_{\delta}(s)\,{\mathcal N}(\phi)\big)\,\equiv\, \mathcal A(\phi).
 \end{equation}
 \noindent
 We collect some useful facts to find the domain of the operator $\mathcal A$ such that $\mathcal A$ becomes a contraction mapping.

\medskip
 The big difference between ${\mathcal E}_{11}$ and ${\mathcal E}_{12}$ is their size. From (\ref{E11norm}) and (\ref{E12norm})
 \begin{equation}
 \label{E12-1norm}
 \|{\mathcal E}_{12}\|_{L^2(\mathcal S)}\,\leq\, c_{*}\epsilon^{3/2},
 \end{equation}
 \noindent while ${\mathcal E}_{11}$ is only of size $O(\epsilon^{1/2})$.  Similarly, we have
 \begin{equation}
 \label{g0212norm}
 \|g_{02}\|_{L^2(\Bbb R)}+\|g_{12}\|_{L^2(\Bbb R)}\,\leq\, c_{*}\epsilon^{3/2}.
 \end{equation}
On the other hand, the operator ${\mathbf T}$ has a useful property: assuming $\bar{h}$ has a support contained in $|x|\leq 20\delta/\epsilon$,  then $\phi={\mathbf T}(\bar{h})$ satisfies the estimate
 \begin{equation}
 \label{T2estimate}
 |\phi(x,z)|+|\nabla \phi(x,z)|\,\leq\, \|\phi\|_{L^{\infty}}e^{-2\delta/\epsilon}
~~{\rm for}~|x|>40\delta/\epsilon.
\end{equation}
\noindent Recall that the operator $\psi(\phi)$ satisfies, as seen directly from its definition
\begin{equation}
\label{psiphidefinition}
\|\psi(\phi)\|_{L^{\infty}}\,\leq\, C\epsilon\Big[\big \| |\phi|+|\nabla \phi|\big\|_{L^{\infty}(|x|>20\delta/\epsilon)}+e^{-\delta/\epsilon}\Big],
\end{equation}\noindent
and a Lipschitz condition of the form
\begin{equation}
\label{psi-lip1}
\|\psi(\phi_1)-\psi(\phi_2)\|_{L^{\infty}}\,\leq\, C\epsilon \Big[\parallel |\phi_1-\phi_2|+|\nabla(\phi_1-\phi_2)|\parallel_{L^{\infty}(|x|>20\delta/\epsilon)}\Big].
\end{equation}
Now, the facts above will allow us to construct a region where contraction mapping principle applies and then solve the problem (\ref{projectedproblem1})-(\ref{projectedproblem4}).

\medskip
Consider the following closed, bounded subset
\begin{equation}
\label{D}
\Xi\,=\,\Big\{\phi\in H^2(\mathcal S)\,\Big|\,\,\|\phi\|_{H^2(\mathcal S)}
\,\leq\, \tau \epsilon^{3/2},
\quad\big\| |\phi|+|\nabla \phi|\big\|_{L^{\infty}(|x|>40\delta/\epsilon)}\,\leq\, \|\phi\|_{H^2(\mathcal S)}e^{-\delta/\epsilon}\Big\}.
\end{equation}
If the constant $\tau$ is sufficiently large, then the map $\mathcal A$ defined in (\ref{section-5-1-fixed}) is a contraction from $\Xi$ into itself.
This property will lead to the following proposition:
\noindent \begin{proposition}
 \label{prop}There is a number $\tau>0$ such that for all $\epsilon$ small enough satisfying gap condition\;(\ref{gapcondition})\; and all parameters $(f,e)$ in $\mathcal F$,  problem (\ref{projectedproblem1})-(\ref{projectedproblem4}) has a unique solution $\phi=\phi(f,e)$ which satisfies
$$
\|\phi\|_{H^2(\mathcal S)}\,\leq\, \tau \epsilon^{3/2},$$
$$
\big\| |\phi|+|\nabla \phi| \big\|_{L^{\infty}(|x|>40\delta/\e)}\,\leq\, \|\phi\|_{H^2(\mathcal S)}e^{-\delta/\epsilon}.
$$
 Moreover, $\phi$ depends Lipschitz-continuously on the parameters $f$ and $e$ in the sense of the estimate
 \begin{equation}
\label{characteriztion}
\|\phi(f_1,e_1)\,-\,\phi(f_2,e_2)\|_{H^2(\mathcal S)}
\,\leq\, C\epsilon^{3/2}\big [\,\|f_1-f_2\|_{*}\,+\,\|e_1-e_2\|_{**}\,\big].
\end{equation}
\end{proposition}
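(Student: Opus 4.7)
The plan is to apply Banach's fixed point theorem to the operator $\mathcal{A}$ defined in (\ref{section-5-1-fixed}), working in the closed subset $\Xi$ of $H^2(\mathcal S)$ defined in (\ref{D}). By Proposition \ref{proposition 7.1}, the inverse operator ${\mathbf T}$ is bounded from $L^2$ data into $H^2(\mathcal S)$ with the sharp quantitative control (\ref{proposition6.1-1-innter equation}), and it transfers the decay property (\ref{proposition6.1-2-inter equation}) whenever the data have compact support in $|x|\le 20\delta/\epsilon$. Since the cut-off $\eta^\epsilon_\delta(s)$ enforces exactly this support property for both $-\eta^\epsilon_\delta\,{\mathcal E}_{12}$ and $-\eta^\epsilon_\delta\,\mathcal{N}(\phi)$, any fixed point of $\mathcal{A}$ will automatically inherit the pointwise exponential decay built into the definition of $\Xi$.

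First I would verify the self-mapping property $\mathcal{A}(\Xi)\subset\Xi$. The linear contribution is immediate from (\ref{E12-1norm}) combined with (\ref{proposition6.1-1-innter equation}), giving $\|{\mathbf T}(-\eta^\epsilon_\delta{\mathcal E}_{12})\|_{H^2(\mathcal S)}\le C\epsilon^{3/2}$. For the nonlinear term (\ref{Nmathcal}) I would expand $\mathcal{N}(\phi)=(v_5+\phi+\psi(\phi))^p-v_5^p-p\,v_5^{p-1}\phi$ to isolate the leading quadratic-or-higher dependence on $\phi$ and $\psi(\phi)$. Using the Sobolev embedding in the two-dimensional strip $\mathcal S$, any $\phi\in\Xi$ satisfies $\|\phi\|_{L^\infty}\le C\tau\epsilon^{3/2}$; the second defining property of $\Xi$ together with (\ref{psiphidefinition}) then gives $\|\psi(\phi)\|_{L^\infty}\le C\epsilon\,e^{-\delta/\epsilon}$. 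Consequently $\|\eta^\epsilon_\delta\mathcal{N}(\phi)\|_{L^2(\mathcal S)}=o(\epsilon^{3/2})$, and for $\tau$ chosen sufficiently large the bound $\|\mathcal{A}(\phi)\|_{H^2(\mathcal S)}\le\tau\epsilon^{3/2}$ follows; the decay condition in $|x|>40\delta/\epsilon$ is inherited directly from (\ref{proposition6.1-2-inter equation}).

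Next I would prove that $\mathcal{A}$ is a contraction on $\Xi$. Given $\phi_1,\phi_2\in\Xi$, the difference $\mathcal{N}(\phi_1)-\mathcal{N}(\phi_2)$ is estimated by the mean value theorem as bounded pointwise by
\begin{equation*}
C\bigl(|\phi_1|+|\phi_2|+|\psi(\phi_1)|+|\psi(\phi_2)|\bigr)^{\min(p-1,1)}\bigl(|\phi_1-\phi_2|+|\psi(\phi_1)-\psi(\phi_2)|\bigr),
\end{equation*}
together with a lower-order term when $p<2$ that requires a H\"older-type expansion rather than a Taylor one. Plugging in the Sobolev bound and using the Lipschitz estimate (\ref{psi-lip1}) on $\psi$, this yields $\|\mathcal{A}(\phi_1)-\mathcal{A}(\phi_2)\|_{H^2(\mathcal S)}\le o(1)\|\phi_1-\phi_2\|_{H^2(\mathcal S)}$. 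Banach's theorem then produces the unique fixed point $\phi=\phi(f,e)$ in $\Xi$.

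For the Lipschitz dependence on the parameters, I would write $\phi_i=\mathcal{A}_{f_i,e_i}(\phi_i)$ and subtract to obtain
\begin{equation*}
\phi_1-\phi_2={\mathbf T}\Bigl(-\eta^\epsilon_\delta\bigl[{\mathcal E}_{12}(f_1,e_1)-{\mathcal E}_{12}(f_2,e_2)\bigr]-\eta^\epsilon_\delta\bigl[\mathcal{N}_{1}(\phi_1)-\mathcal{N}_{2}(\phi_2)\bigr]\Bigr),
\end{equation*}
where the subscripts on $\mathcal{N}$ track the dependence of $v_5$ on the parameters. The first bracket is controlled by $C\epsilon^{3/2}(\|f_1-f_2\|_*+\|e_1-e_2\|_{**})$ via (\ref{E12L}), while the second decomposes into a contraction-like piece (absorbed into the left-hand side as above) plus a parameter-dependent piece of the same size, once one invokes (\ref{psi-lip}) for $\check{\psi}$ and the smooth dependence of the approximation $v_5$ in (\ref{basic approximate}) on $(f,e)$. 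The main obstacle throughout is the interplay between the outer gluing term $\psi(\phi)$ and the local nonlinearity $\mathcal{N}(\phi)$: without the decay component of the norm in $\Xi$, the term $\psi(\phi)$ is only of size $O(\epsilon)$ rather than exponentially small, and would destroy the contraction. The subtle point is that the decay property is self-consistent precisely because the data after applying $\eta^\epsilon_\delta$ are compactly supported, which is why the choice of the set $\Xi$ is tailored to match (\ref{proposition6.1-2-inter equation}).
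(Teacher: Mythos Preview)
Your proposal is correct and follows essentially the same approach as the paper, which itself defers the details to Proposition 5.1 in \cite{dPKW}: the contraction mapping argument on $\mathcal A$ in the set $\Xi$, using Proposition \ref{proposition 7.1} together with the error bounds (\ref{E12-1norm}), (\ref{g0212norm}) and the properties (\ref{psiphidefinition})--(\ref{psi-lip1}) of $\psi$, is exactly the intended route. The only minor omission is that in verifying $\mathcal A(\Xi)\subset\Xi$ you should also invoke the boundary error estimate (\ref{g0212norm}) alongside (\ref{E12-1norm}), since the bound (\ref{proposition6.1-1-innter equation}) involves $\|g_{02}\|_{L^2(\mathbb R)}+\|g_{12}\|_{L^2(\mathbb R)}$ as well.
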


\medskip
\proof
The proof is similar as that for Proposition 5.1 in \cite{dPKW}.
\qed

 \section{Estimates of the projection against $w_x$ and $Z$}
 \label{section6}
\setcounter{equation}{0}

\noindent
As we have mentioned in Section \ref{section3}, in the next part of the paper, we will set up
 equations for the parameters $f$ and $e$ which are equivalent to making $c(\epsilon z)$, $d(\epsilon z)$, $l_1$, $l_0$, $m_1$ and $m_0$ are identically zero in the system (\ref{projectedproblem1})-(\ref{projectedproblem4}). To achieve this, we first multiply the equation against $w_x$ only in $x$.
 The equations
 $$
 c(\epsilon z)=0,\quad l_1=0,\quad l_0=0,
 $$
 are then equivalent to the relations
 \begin{align}
 \int_{\Bbb R}\Big[\eta^\epsilon_{\delta}(s)\, {\mathcal E}+\eta^\epsilon_{\delta}(s)\, {\mathcal N}(\phi)+\chi(\epsilon|x|) B_3(\phi)+p\big(\beta^{-2}\chi(\e |x|){\mathbf W}^{p-1}&-w^{p-1}\big)\phi \Big]\,w_x\,{\rm d}x \,=\,0, \label{c=0}
     \\
      \int_{\Bbb R}\Big[\eta^\epsilon_{\delta}(s)\, g_1+ \chi(\epsilon|x|) D_3^1(\phi)+\chi(\epsilon|x|) D^1_0(\phi)\Big]\,w_x\,{\rm d}x\,&=\,0,
  \quad z=1/\epsilon, \label{l1=0}
  \\
  \int_{\Bbb R}\Big[\eta^\epsilon_{\delta}(s)\, g_0+\chi(\epsilon|x|) D_3^0(\phi)+\chi(\epsilon|x|) D_0^1(\phi)\Big]\,w_x\,{\rm d}x\,&=\,0,
  \quad z=0. \label{l2=0}
 \end{align}
Similarly,
$$
d(\epsilon z)=0,
\quad
m_1=0,\quad
m_0=0,
$$
if and only if
\begin{align}
    \int_{\Bbb R}\Big[\eta^\epsilon_{\delta}(s)\, {\mathcal E}+\eta^\epsilon_{\delta}(s)\,{\mathcal N}(\phi)+\chi(\epsilon|x|) B_3(\phi)+p\big(\beta^{-2}\chi(\e |x|){\mathbf W}^{p-1}&-w^{p-1}\big)\phi \Big]\,Z\,{\rm d}x\,=\,0,\label{d=0}
    \\
    \int_{\Bbb R}\Big[\eta^\epsilon_{\delta}(s)\, g_1+\chi(\epsilon|x|) D_3^1(\phi)+\chi(\epsilon|x|) D^1_0(\phi)\Big]\,Z\,{\rm d}x\,&=\,0,
  \quad z=1/\epsilon,\label{m1=0}
  \\
  \int_{\Bbb R}\Big[\eta^\epsilon_{\delta}(s)\, g_0+\chi(\epsilon|x|) D_3^0(\phi)+\chi(\epsilon|x|) D_0^1(\phi)\Big]\,Z\,{\rm d}x\,&=\,0,
   \quad z=0.\label{m0=0}
\end{align}
It is therefore of crucial importance to carry out computation of the estimates of the terms
$$\int_{\Bbb R}{\mathcal E}w_x\,{\rm d}x
\quad
\mbox{and}
\quad
\int_{\Bbb R}{\mathcal E}Z\,{\rm d}x
$$
and, similarly, some other terms involving $\phi$.

\subsection{Estimates for projections of the error}
In this section, we carry out some estimates for the terms
$$\int_{\Bbb R}{\mathcal E}w_x\,{\rm d}x
\quad
\mbox{and}
\quad
\int_{\Bbb R}{\mathcal E}Z\,{\rm d}x.
$$
 For the pair $(f,e)$ satisfying (\ref{fnorm}) and (\ref{enorm}),
  we denote  ${\mathbf b}_{1\epsilon}$ and ${\mathbf b}_{2\epsilon}$,  generic, uniformly bounded continuous functions of the form
    $$
    {\mathbf b}_{l\epsilon}\,=\,{\mathbf b}_{l \epsilon}\big(z, f(\epsilon z), e(\epsilon z), f'(\epsilon z), \epsilon e'(\epsilon z)\big),~l=1,2
    $$
where ${\mathbf b}_{1\epsilon}$ is uniformly Lipschitz in its four last arguments.
\medskip

\subsubsection{}
First, we estimates for the term $\int_{\Bbb R}{\mathcal E}w_x\,{\rm d}x $, where ${\mathcal E}$, we recall, was defined in (\ref{new error-2}) and $w_x$ is an odd function. Integration against all even terms, say ${\mathcal E}_{11}$ and $S_4, M_{12}, M_{22}$  in ${\mathcal E}_{12}$, therefore just vanish. We have
\begin{equation}
\label{intE1wx}
\begin{split}
\int_{\Bbb R}\,{\mathcal E}\,w_x\,{\rm d}x \,=\,&\,\int_{\Bbb R}{\mathcal E}_{12}\,w_x\,{\rm d}x
\\
\,=\,&\,\int_{\Bbb R}\epsilon^2\,S_3\,w_x\,{\rm d}x +\int_{\Bbb R}\epsilon^2\,S_5\,w_x\,{\rm d}x+\int_{\Bbb R}B_2(w)\,w_x\,{\rm d}x+\int_{\Bbb R}\beta^{-2}\,\phi_{1,zz}\,w_x\,{\rm d}x
\\
&+\int_{\Bbb R}\beta^{-2}\,\phi_{4,zz}\,w_x\,{\rm d}x+\int_{\Bbb R}M_{32}(x,z)\,w_x\,{\rm d}x+\int_{\Bbb R}M_{42}(x,z)\,w_x\,{\rm d}x
\\
&+\int_{\Bbb R}M_{52}(x,z)\,w_x\,{\rm d}x+\int_{\Bbb R}M_{62}(x,z)\,w_x\,{\rm d}x+\int_{\Bbb R}\big[B_3(\phi_3)+B_3(\phi_4)\big]\,w_x\,{\rm d}x
\\
\,\equiv\,&\, \textrm{I}_1\,+\,\textrm{I}_2\,+\,\textrm{I}_3\,+\,\textrm{I}_4\,+\,\textrm{I}_5
\,+\,\textrm{I}_6\,+\,\textrm{I}_7\,+\,\textrm{I}_8\,+\,\textrm{I}_9\,+\,\textrm{I}_{10}.
\end{split}
\end{equation}
These terms will be estimated as follows.

\medskip
By repeating the same computation used in (\ref{J1}) and (\ref{J2}), we get
\begin{equation}
\label{I1}
\begin{split}
\textrm{I}_{1}\,=\,&\int_{\Bbb R}\epsilon^2\,S_3\,w_x\,{\rm d}x
\\
\,=\,&-\epsilon^2\, \varrho_{1}\,\beta^{-1}\,f{''}
\,-\,
\epsilon^2\, \varrho_{1}\,\beta^{-1}\,(\beta^{-1}\,\beta'+2\,\alpha^{-1}\,\alpha')\,f{'}
-\epsilon^2\, \varrho_{1}\,\beta^{-1}\Big(k^2-\sigma \,\beta^{-2}\,V_{tt}\Big)\,f,
\end{split}
\end{equation}
and also
\begin{equation}\label{I2}
 \textrm{I}_{2}\,=\,\int_{\Bbb R}\,\epsilon^2\,S_5\,w_x\,{\rm d}x\,=\,-\epsilon^2\, \varrho_{1}\,\beta^{-1}\,\big[\,\varpi\,\beta'\beta^{-1}+2\,\varpi\,\alpha^{-1}\,\alpha'\big]f.
\end{equation}
Recall the expression of $B_2(w)$ in (\ref{B2v}) and $\phi_1$, it is easy to check that
\begin{equation}\label{I3}
\textrm{I}_{3}+\textrm{I}_{4}\,=\,\int_{\Bbb R}B_2(w)\,w_x\,{\rm d}x
\,+\int_{\Bbb R}\beta^{-2}\,\phi_{1,zz}\,w_x\,{\rm d}x\,=\,\epsilon^3 \,{\mathbf b}_{1\epsilon}\,f{''}
+\epsilon^3\,{\mathbf b}_{2\epsilon}.
\end{equation}
Since
$$
\phi_4(x,z)=\epsilon^2 \phi_{41}\big(x, {\epsilon}z\big)+\epsilon^2\phi_{42}(x,{\epsilon}z),
$$
it can be derived  that
\begin{equation}
\label{I5}
  \textrm{I}_5\,=\,\int_{\Bbb R}\beta^{-2}\,\phi_{4,zz}\,w_x\,{\rm d}x\,=\,O(\epsilon^4).
\end{equation}
Since $w_x$ is odd in $x$, we need only consider the odd terms in $M_{32}(x,z)$ and get
\begin{align}
\begin{aligned}
\label{I6}
\textrm{I}_6\,=\,&\int_{\Bbb R}M_{32}(x,z)\,w_x\,{\rm d}x
\\
\,=\,&\epsilon^2\,\beta^{-1}\,\sigma^{-1}\,k^2\,f\int_{\Bbb R}\Big[w_{2,x}\,w_x+\frac {1}{\sigma}\,x\,w_x\,w_2+w_1\,w_x\Big]\,{\rm d}x+O(\epsilon^3).
\end{aligned}
\end{align}
From the definition of $M_{42}(x,z)$,  we can estimate the $I_7$ in (\ref{intE1wx}) as the following
\begin{align}
\textrm{I}_7=&\int_{\Bbb R}\,M_{42}(x,z)\,w_x\,{\rm d}x
\nonumber
\\
=&-\,\frac{2\epsilon^2}{\beta}\,f'\,\xi(\epsilon z)\int_{\Bbb R}\,\big[\epsilon\,A'(\mathfrak{a}(\epsilon z))\,\beta\, Z_x+\phi_{22,xz}\big]\,w_x\,{\rm d}x
\label{I7}
\\
&-\,\frac{2\epsilon^2}{\beta}\,\varpi\,f\,\xi(\epsilon z)\int_{\Bbb R}\,\big[\epsilon\,A'(\mathfrak{a}(\epsilon z))\,\beta\, Z_x+\phi_{22,xz}\big]\,w_x\,{\rm d}x+\epsilon^3\, {\mathbf b}_{1\epsilon}\,f{''}+\epsilon^3\,{\mathbf b}_{2\epsilon}
\nonumber
\\
\equiv&\,  \epsilon^2\,  \varrho_{1}\,\beta^{-1}\,\alpha_1(z)\,f'
\,+\,\epsilon^2\, \varrho_{1}\,\beta^{-1}\,\alpha_2(z)\,f+\epsilon^3 \,{\mathbf b}_{1\epsilon}\,f{''}+\epsilon^3\,{\mathbf b}_{2\epsilon},
\nonumber
\end{align}
where $\alpha_1(z)$ and $\alpha_2(z)$ are defined in (\ref{alpha1})-(\ref{alpha2}).

\medskip
From the definition of $M_{52}(x,z)$, we need only consider the odd terms and the higher order terms involving $e'$ and $e''$, so we get
\begin{equation}
\label{I8}
\begin{split}
 \textrm{I}_8=&-\epsilon^2\,k\,\beta^{-1}\,e\int_{\Bbb R}\big[\,Z_{x}+\sigma^{-1}\,x\,Z\big]\,w_x\,{\rm d}x+2\,\epsilon^4\, k\,\beta^{-3}\,e{''}(\epsilon z)\m x\,w_x\,Z(x)\,{\rm d}x
 \\
  &\qquad+\epsilon^3\,\Big[{\mathbf b}_{1\epsilon}^1\,e'+ {\mathbf b}_{1\epsilon}\,f{''}+{\mathbf b}_{2\epsilon}\Big]
  \\
  \,\equiv\,&\epsilon^2 \varrho_{1}\beta^{-1}\Big[\hbar_{31}({\epsilon}z)\,e+\epsilon^2\hbar_4({\epsilon}z)\,e{''}\Big]
+\epsilon^3\Big[{\mathbf b}_{1\epsilon}^1\,e'+{\mathbf b}_{1\epsilon}\,f{''}+{\mathbf b}_{2\epsilon}\Big].
  \end{split}
\end{equation}
It can easily be verified that
\begin{equation}
\label{I9}
\begin{split}
\textrm{I}_9\,=&\,\int_{\Bbb R}M_{62}(x,z)\,w_x\,{\rm d}x
\\
=&\epsilon^2\,\beta^{-1}\,\sigma^{-1}\,k^2\,f\,\int_{\Bbb R}p(p-1)\,w_x\,w^{p-2}\,w_1\,w_2\,{\rm d}x
\\
&+\e^2p(p-1)a_{11}\,e\int_{\Bbb R}w^{p-2}\,w_1\,Z\,
   w_x\,{\rm d}x+\epsilon^3{\mathbf b}_{2\epsilon}.
\\
=&\epsilon^2\,\beta^{-1}\,\sigma^{-1}\,k^2\,f\,\int_{\Bbb R}p(p-1)\,w_x\,w^{p-2}\,w_1\,w_2\,{\rm d}x+\epsilon^2 \varrho_{1}\beta^{-1}\,\hbar_{32}({\epsilon}z)\,e+\epsilon^3{\mathbf b}_{2\epsilon}
\end{split}
\end{equation}
According to the fact that the terms in $B_3(\phi_3)$ and $B_3(\phi_4)$ are of order $O(\e^3)$,
it follows that
\begin{equation}
\label{I10}
\textrm{I}_{10}=\int_{\Bbb R}\Big[B_3(\phi_3)+B_3(\phi_4)\Big]\,w_x\,{\rm d}x\,=\,\epsilon^3\, {\mathbf b}_{1\epsilon}\,f{''}+\epsilon^3\,{\mathbf b}_{2\epsilon}.
\end{equation}

\medskip
Adding (\ref{I6}) and (\ref{I9}), we get
\begin{equation*}
\begin{split}
\textrm{I}_6+ \textrm{I}_9\,&=\,\int_{\Bbb R}M_{32}(x,z)\,w_x\,{\rm d}x+\int_{\Bbb R}M_{62}(x,z)\,w_x\,{\rm d}x
\\
&=\epsilon^2\,\beta^{-1}\,\sigma^{-1}\,k^2\,f\,\int_{\Bbb R}\Big[p(p-1)w^{p-2}\,w_x\,w_1\,w_2+w_{2,x}\,w_x+\sigma^{-1}\,x\,w_x\,w_2
  \\
  &\qquad\qquad\qquad\qquad\qquad+w_1w_x\Big]\,{\rm d}x+\epsilon^2 \varrho_{1}\beta^{-1}\,\hbar_{32}({\epsilon}z)\,e+\big[\epsilon^3 {\mathbf b}_{1\epsilon}f{''}+\epsilon^3{\mathbf b}_{2\epsilon}\Big]
\\
&=\,-\epsilon^2 \varrho_{1}\beta^{-1}\sigma^{-1}k^2f\,+\epsilon^2 \varrho_{1}\beta^{-1}\,\hbar_{32}({\epsilon}z)\,e+\,\epsilon^3\big[{\mathbf b}_{1\epsilon}f{''}+{\mathbf b}_{2\epsilon}\big].
\end{split}
\end{equation*}
This leads to  the conclusion of this section: there holds
\begin{equation}
\label{wxjifen}
\begin{split}
\i {\mathcal E}w_x\,{\rm d}x\,=\,&-\epsilon^2\, \varrho_{1}\,\beta^{-1}\,\Big\{f{''}
+\big[\hbar_1(\epsilon z)+\alpha_1(z)\big]\,f'+[\hbar_2(\epsilon z)+\alpha_2( z)]\,f\Big\}
\\
&+\epsilon^2\, \varrho_{1}\,\beta^{-1}\,\big[\hbar_3(\epsilon z)\,e+\epsilon^2\,\hbar_4({\epsilon}z)\,e{''}\big]+\epsilon^3\,\Big[{\mathbf b}_{1\epsilon}^1\,e{'}+{\mathbf b}_{1\epsilon}^2\,f{''}+{\mathbf b}_{2\epsilon}\Big],
\end{split}
\end{equation}
the functions $\hbar_1$ and $\hbar_2$ are defined in (\ref{gamma1})-(\ref{gamma2}), $\hbar_{3}=\hbar_{31}+\hbar_{32}$.

\medskip
\subsubsection{}

In the following, we are in a position to provide the concise estimate for the integral $\int_{\Bbb R}{\mathcal E}\,Z\,{\rm d}x $.
Using the decomposition of ${\mathcal E}$,  we obtain
$$\int_{\Bbb R}{\mathcal E}\,Z\,{\rm d}x \,=\,\int_{\Bbb R}{\mathcal E}_{11}\,Z\,{\rm d}x +\int_{\Bbb R}{\mathcal E}_{12}\,Z\,{\rm d}x ,$$
where
\begin{equation*}
 \int_{\Bbb R}{\mathcal E}_{11}\,Z\,{\rm d}x \,=\,\epsilon\,\big[\epsilon^2\,\beta^{-2}\,e{''}+\lambda_0\,e\,\big]\int_{\Bbb R}\,Z^2\,{\rm d}x \,=\,\epsilon^3\,\beta^{-2}\,e{''}+\epsilon\lambda_0\,e,
\end{equation*}
and
\begin{equation}
\label{E12Z}
\begin{split}
\m  {\mathcal E}_{12}\,Z\,{\rm d}x
\,=\,&\int_{\Bbb R}\epsilon^2\,S_4\,Z\,{\rm d}x+\int_{\Bbb R}B_2(w)\,Z\,{\rm d}x+\int_{\Bbb R}\beta^{-2}\phi_{1,zz}\,Z\,{\rm d}x
\\
&+\int_{\Bbb R}\beta^{-2}\,\phi_{4,zz}\,Z\,{\rm d}x+\int_{\Bbb R}M_{12}(x,z)\,Z\,{\rm d}x+\int_{\Bbb R}M_{22}(x,z)\,Z\,{\rm d}x
\\
&+\int_{\Bbb R}M_{32}(x,z)\,Z\,{\rm d}x+\int_{\Bbb R}M_{42}(x,z)\,Z\,{\rm d}x+\int_{\Bbb R}M_{52}(x,z)\,Z\,{\rm d}x
\\
&+\int_{\Bbb R}M_{62}(x,z)\,Z\,{\rm d}x+\int_{\Bbb R}B_3(\phi_3)\,Z\,{\rm d}x+\int_{\Bbb R}B_3(\phi_4)\,Z\,{\rm d}x
\\
\,\equiv\,& \textrm{II}_1\,+\,\textrm{II}_2\,+\,\textrm{II}_3\,+\,\textrm{II}_4\,+\,\textrm{II}_5\,
+\,\textrm{II}_6\,+\,\textrm{II}_7\,+\,\textrm{II}_8\,+\,\textrm{II}_9
+\textrm{II}_{10}+\textrm{II}_{11}+\textrm{II}_{12}.
\end{split}
\end{equation}

\medskip
According to the expression of $S_4$ in (\ref{sw-gather}) and the constraint of $f$ in (\ref{fnorm}), it follows that
\begin{equation}
\begin{split}
\label{II1}
&\textrm{II}_1\,=\,\int_{\Bbb R}\,\epsilon^2\,S_4\,Z\,{\rm d}x
\\
&
=\epsilon^2\int_{\Bbb R}\big[ (f'^2+2f'h')w_{xx}+2\,\varpi\,(ff'+f'h)w_{xx}-\frac{1}{2\beta^{2}}\,V_{tt}\,f^2\,w\big]Z\,{\rm d}x
%\\
%&=\epsilon^2\int_{\Bbb R}(2f'h'w_{xx}+2\,\varpi\,f'h\,w_{xx})Z\,{\rm d}x+O(\epsilon^3)
\\
&=\e^2\,\varrho_{2}\,f'\,h'+\e^2\,\varrho_{2}\,\varpi\,f'h+O(\epsilon^3),
\end{split}
\end{equation}
where
$$
\varrho_{2}\,=2\,\int_{\Bbb R}\,w_{xx}\,Z\,{\rm d}x.
$$
The estimate of $\textrm{II}_7$ can be proved by the same way as employed in the above estimate.
%In the same way, due to the assumption on $f$ and the expression of $M_{32}(x,z)$ in (\ref{M32}), we can obtain that
\begin{equation}
\label{II7}
 \textrm{II}_7=\int_{\Bbb R}M_{32}(x,z)\,Z\,{\rm d}x
=\int_{\Bbb R}\Big(\epsilon^2\,k^2\,\sigma^{-2}\,f^2\,w_2+\epsilon^3\,a_7({\epsilon}s,
{\epsilon}z)\Big)\,Z\,{\rm d}x
=\,\epsilon^3\,{\mathbf b}_{1\epsilon}\,f{''}+\epsilon^3\,{\mathbf b}_{2\epsilon}.
\end{equation}

\medskip
Note that $B_2(w)=O(\epsilon^3)$, it is easy to check that
\begin{equation}
\label{II2}
\textrm{II}_{2}\,=\,\m B_2(w)Z\,{\rm d}x \,=\,\epsilon^3\,{\mathbf b}_{1\epsilon}\,f{''}+\epsilon^3\,{\mathbf b}_{2\epsilon}.
\end{equation}
Since
 $$\phi_1(x, z)=\epsilon\,a_{11}({\epsilon}z)\,w_1(x)+\epsilon\,a_{12}({\epsilon}z)\,(f({\epsilon}z)+h({\epsilon}z))\,w_2(x),$$
  so we obtain
\begin{equation}
\label{II3}
\begin{split}
\textrm{II}_{3}&\,=\,\beta^{-2}\m \phi_{1,zz}Z\,{\rm d}x
\\
&\,=\,\epsilon^3 \,\beta^{-2}\,a_{12}(\epsilon z)\,f{''}\m w_2(x)\,Z(x)\,{\rm d}x+\epsilon^3 \,{\mathbf b}_{2\epsilon}
\\
&\,=\,\epsilon^3\,{\mathbf b}_{1\epsilon}\,f''+\epsilon^3 \,{\mathbf b}_{2\epsilon}.
\end{split}
\end{equation}
Since
$$\phi_4(x,z)=\epsilon^2\, \phi_{41}(x,{\epsilon}z)+\epsilon^2\,\phi_{42}(x,{\epsilon}z),$$
 it is easy to prove that
\begin{equation}
\label{II4}
\textrm{II}_{4}\,=\,\beta^{-2}\,\m \phi_{4,zz}\,Z\,{\rm d}x\,=\,O(\epsilon^4).
\end{equation}
According to the expression of $M_{12}(x,z)$ and $M_{22}(x,z)$, we know that the terms in $M_{12}(x,z)$ and $M_{22}(x,z)$ are of order $O(\e^3)$. Hence, it is easy to obtain that
\begin{align}
  \textrm{II}_{5}&=\int_{\Bbb R}M_{12}(x,z)\,Z\,{\rm d}x=O(\epsilon^3),\label{II5}
  \\
\textrm{II}_{6}&=\int_{\Bbb R}M_{22}(x,z)\,Z\,{\rm d}x=O(\epsilon^3). \label{II6}
\end{align}

\medskip
What's more, we can compute that
\begin{equation}
\label{II8}
\begin{split}
\textrm{II}_{8}&=\int_{\Bbb R}M_{42}(x,z)\,Z(x)\,{\rm d}x
\\
&=-\,2\epsilon^2\,\beta^{-1}\big(f'+\varpi\,f\big)\,\xi(\epsilon z)\,\int_{\Bbb R}\,\Big[\epsilon\,A'\big(\mathfrak{a}(\epsilon z)\big)\,\beta\, Z_x+\phi_{22,xz}\Big]\,Z\,{\rm d}x+O(\epsilon^3)
\\
&=O(\epsilon^3).
\end{split}
\end{equation}
We need only to compute those parts in $M_{52}(x,z)$ which are even in $x$. It is easy to check that
\begin{equation}
\label{II9}
 \textrm{II}_{9}=\int_{\Bbb R}M_{52}(x,z)\,Z\,{\rm d}x=-\epsilon^2\beta^{-1}\,k\,e\int_{\Bbb R}\big(Z_{x}+\sigma^{-1}x\,Z\big)\,Z\,{\rm d}x+O(\epsilon^3)=O(\epsilon^3).
\end{equation}
Additionally, we also need to consider some higher order terms in $\textrm{II}_{9}$. The ones involving first derivative of $e$ are
\begin{equation}
\label{M121important}
\begin{split}
2\epsilon^3\,e'&\,\Big(\frac{\beta'}{\beta^{3}}-\frac{\varpi}{\beta^{2}}\,\,\Big)\m xZZ_x\,{\rm d}x+\epsilon^3\,e'\,\Big(\frac{2\alpha'}{\alpha\beta^{2}}-\frac{\varpi}{\beta^{2}}\,\Big)\m Z^2\,{\rm d}x
\\
&\,=\,\epsilon^3\,\Big(\frac {2\alpha'}{\alpha \beta^2}-\frac {\beta'}{\beta^3}\Big)e'
\,\equiv\, \epsilon^3\,\hbar_5({\epsilon}z)\,e',
\end{split}
\end{equation}
\noindent where
\begin{equation}
\label{gamma5}
\hbar_5({\epsilon}z)\,=\,\frac {2\alpha'({\epsilon}z)}{\alpha({\epsilon}z) \beta^2({\epsilon}z)}-\frac {\beta'({\epsilon}z)}{\beta^3({\epsilon}z)}.
\end{equation}
Moreover, the ones involving second derivative of $e$ in $\textrm{II}_{9}$ are
$$\epsilon^3\Big[\epsilon\, f\,\beta^{-2}\,\hbar_6(\epsilon z)+O(\epsilon^2)\Big]\,e{''}(\epsilon z)$$
with $O(\epsilon^2)$ uniform in $\epsilon$.

\medskip
In the terms of $\textrm{II}_{10}$ and $\textrm{II}_{12}$, we need only to consider those parts which are even in $x$. It is best that the even (in $x$) terms in $\textrm{II}_{10}$ and $\textrm{II}_{12}$ are of  order $o(\epsilon^3)$.
Moreover, the terms in $B_3(\phi_3)$ are of order $O(\epsilon^3)$. Consequently, we deduce that
\begin{equation}
  \textrm{II}_{10}+\textrm{II}_{11}+\textrm{II}_{12}=\int_{\Bbb R}M_{62}(x,z)\,Z\,{\rm d}x+\int_{\Bbb R}B_3(\phi_3)\,Z\,{\rm d}x+\int_{\Bbb R}B_3(\phi_4)\,Z\,{\rm d}x=O(\epsilon^3).
\end{equation}

\medskip
 In summary, we have established that, as a function of $\theta=\epsilon z$,
\begin{equation}
\begin{split}
\label{E1Z}
\m {\mathcal E}Z\,{\rm d}x\,=&\,\epsilon^3\,\Big[1+\epsilon\, f\,\hbar_6({\epsilon}z)+O(\epsilon^2)\Big]\,\beta^{-2}\,e{''}+\epsilon^3 \,\hbar_5({\epsilon}z)\,e'
\\
&\quad+\e^2\,\varrho_{2}\,f'\,h'+\e^2\,\varrho_{2}\,\varpi\,f'\,h+\epsilon \,\lambda_0 \, e+O(\epsilon^3),
\end{split}
\end{equation}
where $\hbar_6({\epsilon}z)$ is a  smooth functions of their argument.

\subsection{Projection of terms involving $\phi$}
We will estimate the terms that involve $\phi$ in (\ref{c=0})-(\ref{d=0}) integrated against the functions $w_x$ and $Z$ in the variable $x$. Concerning
$w_x$,  we denote by $\Lambda(\phi)$  the sum of these terms, which can be decomposed as $\Lambda(\phi)\,=\,\sum_{i=1}^3\Lambda_i(\phi)$.

\medskip
Let $\Lambda_1(\epsilon z)\,=\,\m \chi(\epsilon|x|) B_3(\phi)w_x\,{\rm d}x$.
We make the following observation: all terms in $B_3(\phi)$ carry $\epsilon$ and involve power
of $x$ times derivatives of 1,\;2 orders of $\phi$.  The conclusion is that since $w_x$ has exponential decay, then
$$
\int_0^1|\Lambda_1(\theta)|^2{\rm d}\theta\,\leq\, C\epsilon^3\|\phi\|^2_{H^2(\mathcal S)}.
$$

\noindent Hence,
\begin{equation}
\label{Lambda1}
\|\Lambda_1\|_{L^2(0,1)}\,\leq\, C\epsilon^3.
\end{equation}

\medskip
We shall analyze the properties of the operator $\Lambda_1$ acting on the pair $(f,e)$ in $H^2(0,1)$.  We single out two less regular terms which
are operators depending Lipschitz continuously on $(f,e)$.  The one whose coefficient depends on $f{''}$ explicitly has the form
\begin{equation*}
\begin{split}
\Lambda_{1*}\,=\,&-\,\epsilon^2\,\beta^{-1}\,f{''}\m \phi_x\, w_x\,{\rm d}x
\,-\,
2\,\epsilon^3\,k\,\beta^{-1}\,f{''}\m \Big(\frac{x}{\beta}+f+h\Big)\,\phi_x\, w_x\,{\rm d}x
\\
&
\,-\,\epsilon^4\,\beta^{-1}\,a_1(\epsilon s,\epsilon z)\,f{''}
\m \Big(\frac{x}{\beta}+f+h\Big)^2\,\phi_x\,w_x\,{\rm d}x.
\end{split}
\end{equation*}
Since $\phi$ has Lipschitz dependence on $(f,e)$ in the form (\ref{characteriztion}), from Sobolev's embedding, we derive that
$$\|\phi(f_1,e_1)-\phi(f_2,e_2)\|_{L^{\infty}(\mathcal S)}\,\leq\, C \epsilon^{3/2}\big[\,\|f_1-f_2\|_{*}+\|e_1-e_2\|_{**}\,\big].$$
Hence,
\begin{equation}
\label{Lambda1lip}
\|\Lambda_{1*}(f_1,e_1)-\Lambda_{1*}(f_2,e_2)\|_{L^2(0,1)}\,\leq\, C^{3+ 1/2}\big[\,\|f_1-f_2\|_{*}+\|e_1-e_2\|_{**}\,\big].
\end{equation}
Another one comes from second derivative of $\phi$ in $z$
$$\Lambda_{1**}\,=\,2\,\epsilon\, k\alpha\m \Big(\frac{x}{\beta}+f+h\Big)\phi_{zz}\,w_x\,{\rm d}x+\epsilon^2\, a_1(\epsilon s,\epsilon z)\alpha \m \Big(\frac{x}{\beta}+f+h\Big)^2\,\phi_{zz}\,w_x\,{\rm d}x.$$
Then
 \begin{equation}
 \label{Lamda1**}
 \|\Lambda_{1**}(f_1,e_1)-\Lambda_{1**}(f_2,e_2)\|_{L^2(0,1)}\,\leq\, C\,\epsilon^3\, \Big[\|f_1-f_2\|_{*}+\|e_1-e_2\|_{**}\Big].
 \end{equation}

\medskip
For fixed $\epsilon$,  the remainder $\Lambda_1-\Lambda_{1*}-\Lambda_{1**}$ actually defines a compact operator of the pair $(f,e)$
from $H^2(0,1)$ into $L^2(0,1)$.  This is a consequence of the fact that weak convergence in $H^2(\mathcal S)$ implies local strong convergence in $H^1(\mathcal S)$,  and
the same is the case for $H^2(0,1)$ and $C^1[0,1]$. If $f_j$ and $e_j$ are bounded sequences in $H^2(0,1)$,  then clearly the functions $\phi(f_j,e_j)$
 constitute a bounded sequence in $H^2(\mathcal S)$.  In the above remainder we can integrate by parts once in $x$
 if necessary. Averaging against $w_x$ which decays exponentially, localizes the situation and the desired result follows.

\medskip
 We also observe that
 $$
 \Lambda_2(\epsilon z)\,=\,\m \eta^\epsilon_{\delta}(s)\, {\mathcal N}(\phi)w_x\,{\rm d}x,
 $$
 can be estimated similarly. Using the definition of ${\mathcal N}(\phi)$ and
 the exponential decay of $w_x$ we get
 $$
 \|\Lambda_2\|_{L^2(0,1)}\,\leq\, C\epsilon^{ 1/2}\|\phi\|_{H^2(\mathcal S)}\,\leq\, C\epsilon^3.
 $$

 \medskip
Finally,  let  us consider
 $$
 \Lambda_3(\epsilon z)\,=\,\m p\big(\beta^{-2}\chi(\e |x|){\mathbf W}^{p-1}-w^{p-1}\big)\phi \,w_x\,{\rm d}x.
 $$
 Since
 $$
v_5\,=\,w\,+\,\phi_1\,+\,\phi_2\,+\,\epsilon e Z\,+\,\phi_3\,+\,\phi_4
 \quad
 \mbox{ and }
 \quad
 \phi_1\,+\,\phi_2\,+\,\epsilon e Z+\,\phi_3\,+\,\phi_4
 $$
 can be estimated as
 $$\,|\phi_1|\,+\,|\phi_2|\,+\epsilon|e\,Z|\,+\,|\phi_3|\,+\,|\phi_4|\,\leq\, C\epsilon (1+|x|^2)e^{-|x|},$$
 we obtain that for some $\rho>0$ the following uniform bound holds
 $$\big |\big(\beta^{-2}\chi(\e |x|){\mathbf W}^{p-1}-w^{p-1}\big)w_x\big|\,\leq\, C\,\epsilon\, e^{-\rho |x|}.$$
 From here we find that
 $$\|\Lambda_3\|_{L^2(0,1)}\,\leq\, C\,\epsilon^{3/2}\|\phi\|_{H^2(\mathcal S)}\,\leq\, C \epsilon^3.$$
 These two terms $\Lambda_2$ and $\Lambda_3$ also define
 compact operators similarly as before.

\medskip
We observe that exactly the same estimates can be carried out in the terms obtained from integration against
 $Z$.

\subsection{Projection of errors on the boundary}
In this section we compute the projection of errors on the boundary.
Without loss of generality,  only the projections of the error components on $\partial_0{\mathcal S}$ will be given.

\medskip
The main errors on the boundary integrated against $w_x$ and $Z$ in the
variable $x$ can be computed as the following:
\begin{align*}
%\label{g0boundary-new}
\m& g_0(x)w_x\,{\rm d}x\,
\\
=&\,-\epsilon \beta\big(k_1f+f'\big)\m w_x^2\,{\rm d}x
\,-\,\epsilon^2b_5\,a_{11}\m xw_{1,x}w_x\,{\rm d}x
\\
&-\epsilon^2 \beta\big(k_1f+f'\big)\m \Big \{a_{12}(f+h)w_{2,x}+eZ_x+\beta^{-1}\big[A(0)Z_x+\phi_{22}(x,0)\big]\Big\}w_x\,{\rm d}x
\\
&-\epsilon^2 \,\m \Big\{ b_1\,\Big(\beta^{-2}x^2+h^2+2\,f\,h\Big)\beta-k\,
\big[\beta^{-2}\,\beta'\,x^2-(f\,h'+h\,f'+h\,h')\beta\big]\Big\}\,w_x^2\,{\rm d}x
\\
&+\epsilon^2(\alpha'\alpha^{-1}a_{11}+a_{11}')\m w_1w_x\,{\rm d}x
\,+\,\epsilon^2k\alpha'\,\alpha^{-1}\beta^{-1}\m xww_x\,{\rm d}x
\\
&+\epsilon^2k\beta^{-2}\m\Big[\epsilon A'(0)\beta(0)Z(x)+\phi_{22,z}(x,0)\Big]xw_x\,{\rm d}x+O(\epsilon^3).
\end{align*}
%Similarly
%\begin{equation}
%\label{g1boundary-new}
%\begin{split}
%\m& g_1(x)\,w_x\,{\rm d}x\,
%\\
%=&\,-\epsilon\,\beta(1)\,\big[k_2\,f(1)+f'(1)\big]\m w_x^2\,{\rm d}x
%\,-\,\epsilon^2\,b_6\,a_{11}(1)\m x\,w_{1,x}\,w_x\,{\rm d}x
%\\
%&-\epsilon^2\,\beta(1)\,\big[k_2\,f(1)+f'(1)\big]
%\m \Big \{a_{12}\,(f+h)\,w_{2,x}+e(1)\,Z_x
%\\
%&\qquad\qquad\qquad\qquad\qquad\qquad\qquad+\beta^{-1}(1)\big[A({\ell})\,Z_x+\phi_{22}(x,1/ %\epsilon)\big]\Big\}\,w_x\,{\rm d}x
%\\
%&-\epsilon^2 \,\m \Big\{ b_3\,\beta\Big(x^2\,\beta^{-2}+h^2+2\,f\,h\Big)-k(1)\,\beta
%\big[\beta'\,\beta^{-2}\,x^2-(f\,h'+h\,f'+h\,h')\big]\Big\}\,w_x^2\,{\rm d}x
%\\
%&+\epsilon^2\,(\alpha'\,\alpha^{-1}\,a_{11}+a_{11}')\m w_1\,w_x\,{\rm d}x
%\,+\,\epsilon^2\,\beta^{-1}(1)\,k(1)\,\alpha'(1)\,\alpha(1)^{-1}\m x\,w\,w_x\,{\rm d}x
%\\
%&+\epsilon^2\,\beta^{-2}(1)\,k(1)\m\Big[\epsilon \,A'({\ell})\,\beta(1)\,Z(x)+\phi_{22,z}(x,1/ %\epsilon)\Big]\,x\,w_x\,{\rm d}x+O(\epsilon^3).
%\end{split}
%\end{equation}
Using the following formulas
\begin{equation*}
\m Z^2(x)\,{\rm d}x\,=\,-2\m x\,Z_x(x)\,Z(x)\,{\rm d}x\,=\,1,
\end{equation*}
we get the following two estimates
\begin{equation*}
%\label{g0z-new}
\begin{split}
\m g_0(x)Z(x)\,{\rm d}x\,=\,&\frac{\epsilon^2}{2}b_5\,e(0)+\epsilon^2 \big[\alpha'(0)\alpha^{-1}(0)e(0)+e'(0)\big]
+O(\epsilon^3).
\end{split}
\end{equation*}
%\begin{equation}
%\label{g1z-new}
%\begin{split}
%\m g_1(x)Z(x)\,{\rm d}x\,=\,&\frac{\epsilon^2}{2}b_6\,e(1)+\epsilon^2 %\big[\alpha'\alpha^{-1}e(1)+e'(1)\big]
%+O(\epsilon^3).
%\end{split}
%\end{equation}

\medskip
Higher order errors can be proceeded as follows:
%\begin{equation}
%\label{D31wx}
%\begin{split}
%\m &D_3^1\big(\phi(x,1/\epsilon)\big)w_x\,{\rm d}x\,
%\\
%&=\,
%\epsilon\m \Big[b_6\,x+\beta (k_2f+f')\Big]\phi_{x}(x,1/\epsilon)w_x\,{\rm d}x
%\\
%&\quad-\epsilon\alpha'\alpha^{-1}\m \phi(x,1/\epsilon)w_x\,{\rm d}x
%\,-\,\epsilon k(1)\m \Big(\frac{x}{\beta}+f+h\Big)\phi_{z}(x,1/\epsilon)w_{x}\,{\rm d}x
%\\
%&\quad+\epsilon^2\m \Big[b_3\Big(\frac{x}{\beta}+f+h\Big)^2\beta-k(1)\Big(\frac{x}{\beta}+f+h\Big) \Big(\frac{\beta'}{\beta}x-\beta f-\beta h)\Big)\Big]\phi_{x}(x,1/\epsilon)w_{x}\,{\rm d}x
%\\
%&\quad-\epsilon^2k(1)\frac{\alpha'}{\alpha}\m\Big(\frac{x}{\beta}+f+h\Big)\phi(x,1/\epsilon)w_x\,{\rm %d}x
%\,+\,\epsilon^2 b_4\m \Big(\frac{x}{\beta}+f+h\Big)^2\phi_z(x,1/\epsilon)w_{x}\,{\rm d}x
%\\
%&=\, O(\epsilon^{\frac 52}).
%\end{split}
%\end{equation}
\begin{align*}
\m& D_3^0(\phi(x,0))w_x\,{\rm d}x\,
\nonumber
\\
=&\,
\epsilon\m \big[b_5\,x+\beta (k_1f+f')\big]\phi_{x}(x,0)w_x\,{\rm d}x
\nonumber
\\
&-\epsilon\alpha'\alpha^{-1}\m \phi(x,0)w_x\,{\rm d}x
\,-\,\epsilon k\m\Big(\frac{x}{\beta}+f+h\Big)\phi_{z}(x,0)w_{x}\,{\rm d}x
%\label{D30wx}
\\
&+\epsilon^2\m \Big[b_1\Big(\frac{x}{\beta}+f+h\Big)^2\beta-k\Big(\frac{x}{\beta}+f+h\Big) \Big(\frac{\beta'}{\beta}x-\beta f'- \beta h'\Big)\Big]\phi_{x}(x,0)w_{x}\,{\rm d}x
\nonumber
\\
&-\epsilon^2k\frac{\alpha'}{\alpha}\m\Big(\frac{x}{\beta}+f+h\Big)\phi(x,0)w_x\,{\rm d}x
\,+\,\epsilon^2 b_2\m  \Big(\frac{x}{\beta}+f+h\Big)^2\phi_z(x,0)w_{x}\,{\rm d}x
\nonumber
\\
=&\, O(\epsilon^{\frac 52}),
\nonumber
\end{align*}
%\begin{equation}
%\label{D31Z}
%\begin{split}
%\m& D_3^1(\phi(x,1/\epsilon))Z\,{\rm d}x\,
%\\
%&=\,
%\epsilon\m \Big[b_6\,x+\beta (k_2f+f')\Big]\phi_{x}(x,1/\epsilon)Z\,{\rm d}x
%\\
%&-\epsilon\alpha'\alpha^{-1}\m \phi(x,1/\epsilon)Z\,{\rm d}x
%\,-\,\epsilon k(1)\m \Big(\frac{x}{\beta}+f+h\Big)\phi_{z}(x,1/\epsilon)Z\,{\rm d}x
%\\
%&+\epsilon^2\m \Big[b_3\Big(\frac{x}{\beta}+f+h\Big)^2\beta-k(1)\Big(\frac{x}{\beta}+f+h\Big) %\Big(\frac{\beta'}{\beta}x-\beta f-\beta h\Big)\Big]\phi_{x}(x,1/\epsilon)Z\,{\rm d}x
%\\
%&-\epsilon^2k(1)\frac{\alpha'}{\alpha}\m\Big(\frac{x}{\beta}+f+h\Big)\phi(x,1/\epsilon)Z\,{\rm d}x
%\,+\,\epsilon^2 b_4\m \Big(\frac{x}{\beta}+f+h\Big)^2\phi_z(x,1/\epsilon)Z\,{\rm d}x
%\\
%&=\, O(\epsilon^{\frac 52}).
%\end{split}
%\end{equation}
and also
\begin{equation*}
%\label{D30Z}
\begin{split}
\m & D_3^0(\phi(x,0))Z\,{\rm d}x\,
\\
=&\,
\epsilon\m \big[b_5\,x+\beta (k_1f+f')\big]\phi_{x}(x,0)Z\,{\rm d}x
\\
&-\epsilon\alpha'\alpha^{-1}\m \phi(x,0)Z\,{\rm d}x
\,-\,\epsilon k\m \Big(\frac{x}{\beta}+f+h\Big)\phi_{z}(x,0)Z\,{\rm d}x
\\
&+\epsilon^2\m \Big[b_1\Big(\frac{x}{\beta}+f+h\Big)^2\beta
-k\Big(\frac{x}{\beta}+f+h\Big)\Big(\frac{\beta'}{\beta}x-\beta f'-\beta h'\Big)\Big]\phi_{x}(x,0)Z\,{\rm d}x
\\
&-\epsilon^2k\frac{\alpha'}{\alpha}\m\Big(\frac{x}{\beta}+f+h\Big)\phi(x,0)Z\,{\rm d}x
\,+\,\epsilon^2 b_2\m \Big(\frac{x}{\beta}+f+h\Big)^2\phi_z(x,0)Z\,{\rm d}x
\\
 =&\,O(\epsilon^{\frac 52}).
\end{split}
\end{equation*}
The term $D_0^0(\phi)$~on the boundary integrated against $w_x$ and $Z$ in the variable $x$ are of size of order $O(\epsilon^3)$.

\section{The system for $(f,e):$ proof of the theorem }
\label{section7}
\setcounter{equation}{0}

Using the estimates in previous section and $\theta=\epsilon z$,  and then defining the operators
\begin{align}
{\mathbb L}_1(f)&\,\equiv\, f{''}
\,+\,\big[\hbar_1(\theta)\,+\,\alpha_1(\theta/\epsilon)\big]f'\,+\,\big[\hbar_2(\theta)\,+\,\alpha_2( \theta/\epsilon)\big]f,
 \\
{\mathbb L}_2(e)&\,\equiv\,\epsilon^2\,(\beta^{-2}\,e{''}\,+\,\hbar_5(\theta)\,e')\,+\, \lambda_0 e,
\end{align}
where $\hbar_1(\theta)$, $\hbar_2(\theta)$, $\alpha_1(z)$ and $\alpha_2(z)$ are smooth functions defined in (\ref{gamma1}), (\ref{gamma2}), (\ref{alpha1}) and (\ref{alpha2}),
we find the following nonlinear system of differential equations for the parameters $(f,e)$
\begin{align}
{\mathbb L}_1(f)
  &\,=\,\hbar_3(\theta)\,e\,+\,\epsilon^2\,\hbar_4(\theta)\,e{''}\,+\,\epsilon \,{\mathcal M}_{1,\epsilon}, \label{f}
  \\
  {\mathbb L}_2(e)
  &\,=\,\epsilon^3\,f\,\hbar_6(\theta)\,e{''} \,+{\epsilon}\,\varrho_{2}\,f'\,h'+{\epsilon}\,\varrho_{2}\,\varpi \,f'\,h+\,\epsilon^2 {\mathcal M}_{2,\epsilon}, \label{e}
\end{align}
with the boundary conditions
\begin{equation}
\label{boundary condition 1}
  f'(1)\,+\,k_2\,f(1)\,+\,{\mathcal M}_1^1(f,e)\,=\,0,
\end{equation}
\begin{equation}
\label{boundary condition 2}
  f'(0)\,+\,k_1\,f(0)\,+\,{\mathcal M}_1^2(f,e)\,=\,0,
\end{equation}
\begin{equation}
\label{boundary condition 3}
  e'(1)\,+\,\tilde{b}_6\,e(1)\,+\,{\mathcal M}_2^1(f,e)\,=\,0,
\end{equation}
\begin{equation}
\label{boundary condition 4}
  e'(0)\,+\,\tilde{b}_5\,e(0)\,+\,{\mathcal M}_2^2(f,e)\,=\,0.
\end{equation}
The constants ${\tilde b}_6$ and ${\tilde b}_5$ are given by
$$\tilde{b}_6=\frac{b_6}{2}+\frac{\alpha'(1)}{\alpha(1)},
 \qquad
\tilde{b}_5=\frac{b_5}{2}+\frac{\alpha'(0)}{\alpha(0)},
 $$
where $b_5$ and $b_6$ are constants defined in (\ref{b5}) and (\ref{b6}).
${\mathcal M}_{j}^{i}$'s are some terms of order $O(\epsilon^{1/2})$.
The operators ${\mathcal M}_{1,\epsilon}$ and ${\mathcal M}_{2,\epsilon}$ can be decomposed in the following forms
\begin{equation*}
  {\mathcal M}_{l,\epsilon}(f,e)\,=\,\mathbf{A}_{l,\epsilon}(f,e)\,+\,\mathbf{K}_{l,\epsilon}(f,e), \ \ \  \ l=1,2
\end{equation*}
where $\mathbf{K}_{l,\epsilon}$'s are uniformly bounded in $L^{2}(0,1)$ for $(f,e)$ in $\mathcal{F}$ and are also compact.
The operators $\mathbf{A}_{l,\epsilon}$, $i=1, 2$, are Lipschitz in this region, see (\ref{Lambda1lip})-(\ref{Lamda1**})
\begin{equation}
\label{Lipschitz of A}
\|\mathbf{A}_{l,\epsilon}(f_1,e_1)-\mathbf{A}_{l,\epsilon}(f_2,e_2)\|_{L^{2}(0,1)}\,\leq\, C\,\big[\,\|f_1-f_2\|_{*}\,+\,\|e_1-e_2\|_{**}\,\big].
\end{equation}

\medskip
Before solving (\ref{f})-(\ref{boundary condition 4}), some basic facts about the invertibility of corresponding linear operators will be derived. Firstly, we consider the following problem
\begin{equation}
\label{equation of f}
  \begin{split}
  f{''}(\theta)\,+\,\big(\hbar_1(\theta)\,+\,\alpha_1&(\theta/\epsilon)\big)f'(\theta)\,+\,\big( \hbar_2(\theta)\,+\,\alpha_2(\theta/\epsilon)\big)f(\theta)\,=\,\mathbf{g}(\theta),
     \\
    f'(0)+k_1\,f(0)&=0,\qquad\qquad\quad f'(1)+k_2\,f(1)=0.
  \end{split}
\end{equation}
 \begin{lemma}\label{lemma of f}
{\rm\textbf{(1).}}
Under the non-degenerate condition (\ref{nondegeneracy}), if ${\mathbf g}\in L^{2}(0,1)$, then there is a constant $\epsilon_0$ for each $0<\epsilon<\epsilon_{0}$ satisfying (\ref{gapcondition}), such that problem (\ref{equation of f}) has a unique solution $f\in H^2(0,1)$, which satisfies
 $$
 \|f\|_{*}\leq C\|{\mathbf g}\|_{L^2(0,1)}.
 $$
\\
{\rm\textbf{(2).}}
Moreover, if
$$
{\mathbf g}(\theta)=G_1(\theta/\epsilon)+G_2(\theta/\epsilon)+G_3(\theta/\epsilon),
$$
where $G_1$, $G_2$ and $G_3$ are given in (\ref{G1}), (\ref{G2}) and (\ref{G3}), then
$$
 \|f\|_{*}\leq C\e^{\frac{1}{2}}.
$$
 \end{lemma}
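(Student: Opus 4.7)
I would prove the lemma in three stages: first establish invertibility of the limit operator from the nondegeneracy hypothesis, then perturb to $L_\epsilon$ using the gap condition, and finally extract the sharper $\epsilon^{1/2}$ bound from the oscillatory structure of the specific right-hand side in Part (2).

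\textbf{Step 1 (Limit operator).} Introduce the unperturbed operator
\[
L_0 f \,\equiv\, f'' \,+\, \hbar_1(\theta) f' \,+\, \hbar_2(\theta) f
\]
acting on $X := \{f \in H^2(0,1) : f'(0)+k_1 f(0) = 0 = f'(1)+k_2 f(1)\}$. Comparing with \eqref{nondegeneracy}, the nondegeneracy hypothesis on $\Gamma$ is precisely the statement $\ker L_0 = 0$. Since $L_0$ is a regular Sturm--Liouville operator of Fredholm index zero, injectivity implies $L_0 : X \to L^2(0,1)$ is an isomorphism with $\|f\|_{H^2} \leq C_0 \|L_0 f\|_{L^2}$.

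\textbf{Step 2 (Perturbation and the gap condition).} Write $L_\epsilon f = L_0 f + \alpha_1(\theta/\epsilon) f' + \alpha_2(\theta/\epsilon) f$. From \eqref{btheta} and the explicit form of $A(\tilde\theta)$, the gap condition \eqref{gapcondition} keeps $|\sin(\sqrt{\lambda_0}\ell/\epsilon)|$ bounded below, so $A$ and therefore $\alpha_1,\alpha_2$ are uniformly bounded in $\epsilon$. Although these coefficients are $O(1)$, they oscillate at frequency $\sqrt{\lambda_0}/\epsilon$, so each has an antiderivative of size $O(\epsilon)$. Casting the problem as a fixed point $f = L_0^{-1}(\mathbf g - \alpha_1(\cdot/\epsilon) f' - \alpha_2(\cdot/\epsilon) f)$, one integrates by parts against $L_0^{-1}$'s test functions to transfer a derivative onto the slowly-varying quantities; this shows the perturbation operator $T_\epsilon : f \mapsto L_0^{-1}(\alpha_1(\cdot/\epsilon)f' + \alpha_2(\cdot/\epsilon)f)$ has operator norm $o(1)$ on $X$ as $\epsilon \to 0$. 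Hence $I + T_\epsilon$ is invertible by Neumann series, and Part (1) follows with a constant independent of $\epsilon$.

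\textbf{Step 3 (Sharper bound for oscillatory $\mathbf g$).} For Part (2), observe that each $G_j$ is assembled from the rapidly oscillating $A(\mathfrak{a}(\theta))$ paired with exponentially decaying quantities in $x$; consequently $\mathbf g = O(1)$ in $L^\infty$ but $\int_0^\theta \mathbf g(\tau)\,d\tau = O(\epsilon)$ uniformly. Testing $L_\epsilon f = \mathbf g$ against $f$ (after multiplying by the appropriate weight making $L_0$ symmetric) and integrating by parts on the $\mathbf g$-term via its antiderivative yields
\[
\|f'\|_{L^2}^2 + (\text{boundary/lower order}) \,\leq\, C\epsilon\,\|f'\|_{L^2} \,+\, (\text{controlled terms}),
\]
so $\|f\|_{H^1} = O(\epsilon)$. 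Bootstrapping from the equation $f'' = \mathbf g - \hbar_1 f' - \hbar_2 f - \alpha_1(\cdot/\epsilon)f' - \alpha_2(\cdot/\epsilon)f$ gives $\|f''\|_{L^2} = O(1)$ from the unabsorbed $O(1)$ terms, so interpolating $\|f'\|_{L^\infty} \lesssim \|f'\|_{L^2}^{1/2}\|f''\|_{L^2}^{1/2} = O(\epsilon^{1/2})$ and similarly for $\|f\|_{L^\infty}$ produces $\|f\|_* = O(\epsilon^{1/2})$.

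\textbf{Main obstacle.} The delicate point is Step 2: the perturbing coefficients $\alpha_i(\theta/\epsilon)$ are \emph{not} small, only oscillatory. Making the Neumann series converge requires both the gap condition (to prevent $A$ from developing near-resonances that would break the uniform $L^\infty$ bound) and a careful integration-by-parts argument that genuinely extracts an $\epsilon$ from each oscillatory coefficient. Tracking these two issues simultaneously, while respecting the Robin-type boundary conditions, is the technical heart of the proof.
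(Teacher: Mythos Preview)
Your overall architecture (invert the limit operator $L_0$, treat $\alpha_1,\alpha_2$ as a perturbation, exploit oscillation) matches the paper's, but Step~2 as written has a genuine gap.

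You treat the gap condition \eqref{gapcondition} as merely guaranteeing $A\in L^\infty$ uniformly, and then claim that a single integration by parts (trading the oscillatory $\alpha_i(\cdot/\epsilon)$ for an $O(\epsilon)$ antiderivative) makes $T_\epsilon$ small on $X$. This is not enough. The test functions against which you must integrate are the eigenfunctions $y_j$ of $L_0$, and for $j\sim \sqrt{\lambda_0}\,\ell/(\pi\epsilon)$ these oscillate at the \emph{same} frequency as $\alpha_i(\cdot/\epsilon)$; one integration by parts then transfers the derivative onto $y_j$ and returns a factor $j\sim 1/\epsilon$, wiping out the gain. Concretely, the bound $|\langle \alpha_1(\cdot/\epsilon)f',y_j\rangle|\lesssim \epsilon(1+j)\|f\|_{H^2}$ together with Parseval only yields $\|T_\epsilon\|_{X\to X}=O(1)$, so the Neumann series does not close.

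The paper's proof confronts this resonance head-on: after a change of variable it expands in eigenfunctions of $L_0$, integrates by parts \emph{twice} using the eigenvalue equation $-L_0 y_j=\lambda_j y_j$, and obtains
\[
\Big|\int_0^\vartheta \sin\!\big(\tfrac{\sqrt{\lambda_0}\ell}{\epsilon}s\big)\,y_j(s)\,ds\Big|\ \le\ \frac{C\epsilon(\epsilon j+1)}{|\lambda_0\ell^2-\epsilon^2\lambda_j|}.
\]
Here the gap condition enters a second time, controlling the near-resonant denominator $|\lambda_0\ell^2-\epsilon^2\lambda_j|$; the sum over $j$ is then split into three regimes ($2\epsilon^2\lambda_j\le\lambda_0\ell^2$, near-resonant, $2\epsilon^2\lambda_j\ge3\lambda_0\ell^2$) to close the estimate. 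This mode-by-mode resonance analysis is the step your sketch is missing; without it neither the invertibility in Part~(1) nor the refined bound in Part~(2) goes through. (Your Step~3 energy argument inherits the same difficulty: the cross term $\int \alpha_1(\cdot/\epsilon)f'f$ is $O(1)$ and cannot be absorbed by the IBP trick alone, and even accepting $\|f\|_{H^1}=O(\epsilon)$ the equation only gives $\|f''\|_{L^2}=O(1)$, not the $O(\epsilon^{1/2})$ required for $\|f\|_*$.)
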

 \begin{proof}
(1). Under the non-degeneracy condition (\ref{nondegeneracy}), the existence part comes from the a priori estimate and the continuity method. Hence, we focus on the derivations of the estimates.

\medskip
First of all, by the relation
$$
\vartheta\,=\,\frac{ \mathfrak{a}(\theta)}{\ell},
$$
where $\ell$ and $a(\theta)$ are given in (\ref{ellnumber}) and (\ref{a(theta)}), we make the following transformation:
\begin{equation*}
 \eta(\vartheta)\,=\,f(\theta),\qquad \hat{\beta}(\vartheta)\,=\,\beta(\theta),\qquad \hat{{\mathbf g}}(\vartheta)={\mathbf g}(\theta),
\end{equation*}
\begin{equation*}
 \hat{\hbar}_1(\vartheta)\,=\,\hbar_1(\theta),
 \qquad
 \hat{\hbar}_2(\vartheta)\,=\,\hbar_2(\theta),
 \qquad
\hat{ \varpi}(\vartheta)\,=\, \varpi(\theta).
\end{equation*}
 Then (\ref{equation of f}) gets transformed into
 \begin{equation}
 \label{equation of f1}
  \begin{split}
  \eta_{\vartheta\vartheta}
  \,+\,\frac{\hat{\beta}'(\vartheta)}{\hat{\beta}(\vartheta)}\,\eta_\vartheta
  \,+\,\frac{\hat{\hbar}_1(\vartheta)\ell}{\hat{\beta}(\vartheta)}\,\eta_\vartheta
  &\,+\,\frac{\hat{\hbar}_2(\vartheta)\,\ell^2}{\hat{\beta}^2(\vartheta)}\,\eta
  \,+\,\frac{\hat{\alpha}_1(\vartheta)\ell}{\hat{\beta}(\vartheta)}\,\eta_\vartheta
  \,+\,\frac{\hat{\alpha}_2(\vartheta)\ell^2}{\hat{\beta}^2(\vartheta)}\,\eta\,=\,\frac{\ell^2}{\hat{\beta}^2(\vartheta)}\,\hat{{\mathbf g}}(\vartheta),
  \\
   \eta'(1)\,+\,k_2\,\eta(1)&\,=\,0, \qquad \eta'(0)\,+\,k_1\,\eta(0)\,=\,0.
  \end{split}
 \end{equation}
Here the terms $\hat{\alpha}_1$ and $\hat{\alpha}_2$ are define as
 \begin{equation}
 \label{equation a1}
    \begin{split}
    \hat{\alpha}_1(\vartheta)&\,=\,\hat{\bar{\alpha}}_1(\vartheta)+\hat{\tilde{\alpha}}_1(\vartheta),
 \qquad
     \hat{\alpha}_2(\vartheta)\,=\,\hat{\bar{\alpha}}_2(\vartheta)+\hat{\tilde{\alpha}}_2(\vartheta),
\end{split}
\end{equation}
where we have denoted
$$
\hat{\bar{\alpha}}_2(\vartheta)=\hat{\varpi}(\vartheta)\,\hat{\bar{\alpha}}_1(\vartheta),
\qquad
\hat{\tilde{\alpha}}_2(\vartheta)=\hat{\varpi}(\vartheta)\,\hat{\tilde{\alpha}}_1(\vartheta),
$$
\begin{equation*}
    \hat{\bar{\alpha}}_1(\vartheta)\,=2 \varrho_{1}^{-1} \,\epsilon\, \hat{\chi}(\vartheta)\,
    A'({\ell}\vartheta)\,\hat{\beta} (\vartheta)\,\int_{\Bbb R}\,Z_x\,w_x\,{\rm d}x,
\end{equation*}
\begin{equation*}
    \hat{\tilde{\alpha}}_1(\vartheta)\,=2 \varrho_{1}^{-1}\,
    \hat{\chi}(\vartheta)\,
    \displaystyle{\int_{\Bbb R}}\phi_{22,xz}\Bigg(x,\frac{ \mathfrak{a}^{-1}(\ell \vartheta)}{\e}\Bigg)\,w_x\,{\rm d}x,
\end{equation*}
and
\begin{equation*}
   \hat{\chi}(\vartheta)\,\equiv\, \frac{\,\hat{\chi}_0(\vartheta)\,}{ \beta(0)}\,
   +\,\frac{\,1-\hat{\chi}_0(\vartheta)\,}{\,\beta(1)}.
\end{equation*}
In the above, $\mathfrak{a}^{-1}$ is the inverse of the map $\mathfrak{a}$ given in (\ref{a(theta)}).

\medskip
Define the operator ${\mathbb L}_{0}$  in the form
\begin{equation*}
  {\mathbb L}_{0}(\eta):\,=\,\eta''(\vartheta)\,+\,\mathfrak{q}_1(\vartheta)\,\eta'(\vartheta)\,
  +\,\mathfrak{q}_2(\vartheta)\,\eta,
\end{equation*}
where
\begin{equation*}
 \mathfrak{q}_1(\vartheta)\,=\,\frac{\hat{\beta}'(\vartheta)}{\hat{\beta}(\vartheta)}\,
 +\,\frac{\hat{\hbar}_1(\vartheta)\,{\ell}}{\hat{\beta}(\vartheta)},\qquad
 \mathfrak{q}_2(\vartheta)\,=\,\frac{\hat{\hbar}_2(\vartheta)\,{\ell}^2}{\hat{\beta}^2(\vartheta)}.
\end{equation*}
There exists an orthonormal basis of $L^2(0,1)$ constituted by eigenfunctions $\{y_j\}$, associated to the eigenvalues $\{\lambda_j\}$, of the following eigenvalue problem
\begin{equation}
\label{expression of eigenvalue}
   \begin{split}
   -\,{\mathbb L}_{0}\big(y(\vartheta)\big)&\,=\,\lambda \,y(\vartheta),\ \ \ \ \  0<\vartheta<1,
   \\
   y'(1)\,+\,k_2\,y(1)&\,=\,0, \ \ \ \ \ \ \ y'(0)\,+\,k_1\,y(0)\,=\,0.
    \end{split}
\end{equation}
 The result in \cite{LS} shows that, as $j\rightarrow \infty$,
 \begin{equation*}
\sqrt{\lambda_j}=j\pi+\frac{k_2-k_1}{j\pi}+O\Big(\frac{1}{j^3}\Big).
 \end{equation*}
 It is easy to see that there exists a positive constant $C$ such that $|y'_j(\vartheta)|\leq C j$ for all $j\in \mathbb{N}$.

 \medskip
 We then expand
 \begin{align*}
 \frac{{\ell}^2}{\hat{\beta}^2(\vartheta)}\,\hat{{\mathbf g}}(\vartheta)&\,=\,\sum_{j=0}^{\infty}\,{\mathbf g}_j\,y_j(\vartheta),
 \qquad\qquad\eta(\vartheta)\,=\,\sum_{j=0}^{\infty}\,a_j\,y_j(\vartheta),
 \\
  \frac{\hat{\bar{\alpha}}_1{\ell}}{\hat{\beta}(\vartheta)}\,\eta'(\vartheta)&\,=\,\sum_{j=0}^{\infty}\,\bar{d}_j\,y_j(\vartheta), \qquad \frac{\hat{\bar{\alpha}}_2{\ell}^2}{\hat{\beta}^2(\vartheta)}\,\eta(\vartheta)\,=\,\sum_{j=0}^{\infty}\,\bar{c}_j\,y_j(\vartheta),
  \\
  \frac{\hat{\tilde{\alpha}}_1{\ell}}{\hat{\beta}(\vartheta)}\,\eta'(\vartheta)&\,=\,\sum_{j=0}^{\infty}\tilde{d}_j\,y_j(\vartheta),\ \qquad \frac{\hat{\tilde{\alpha}}_2{\ell}^2}{\hat{\beta}^2(\vartheta)}\,\eta(\vartheta)\,=\,\sum_{j=0}^{\infty}\tilde{c}_j\,y_j(\vartheta),
  \\
   \frac{\hat{\alpha}_1{\ell}}{\hat{\beta}(\vartheta)}\,\eta'(\vartheta)&\,=\,\sum_{j=0}^{\infty}\,d_j\,y_j(\vartheta),\qquad \frac{\hat{\alpha}_2{\ell}^2}{\hat{\beta}^2(\vartheta)}\,\eta(\vartheta)\,=\,\sum_{j=0}^{\infty}\,c_j\,y_j(\vartheta).
 \end{align*}
Now, we will estimate the Fourier coefficients above by using the asymptotic behaviors of the following two terms
\begin{equation*}
  \Phi_{j,1}(\vartheta)=\int_{0}^{\vartheta}\sin\Big(\frac{\sqrt\lambda_{0}}{\epsilon}{\ell}s\Big)\,y_j(s)\,{\mathrm d}s,
  \qquad
   \Phi_{j,2}(\vartheta)=\int_{0}^{\vartheta}\cos\Big(\frac{\sqrt\lambda_{0}}{\epsilon}{\ell}s\Big)\,y_j(s)\,{\mathrm d}s.
\end{equation*}
In the following formula, by using the equation for $y_j$ and integrating by parts two times
\begin{align*}
  \Phi_{j,1}(\vartheta)&=-\frac{\epsilon^2}{\lambda_0{\ell}^2}\int_{0}^{\vartheta}
  \Big[\sin\Big(\frac{\sqrt\lambda_{0}}{\epsilon}\,{\ell}\,s\Big)\Big]''y_j(s)\,{\rm d}s
  \\
  &=-\frac{\epsilon}{\sqrt{\lambda_0}\,{\ell}}\Big[\cos\Big(\frac{\sqrt{\lambda_0}}{\epsilon}\,{\ell}\,\vartheta\Big)\,y_j(\vartheta)\,-\,y_j(0)\Big]\,+\,
  \frac{\epsilon^2}{\lambda_0{\ell}^2}\,\sin\Big(\frac{\sqrt{\lambda_0}}{\epsilon}\,{\ell}\,\vartheta\Big)\,y'_j(\vartheta)
  \\
  &\qquad-\frac{\epsilon^2}{\lambda_0{\ell}^2}\int_{0}^{\vartheta}\sin\Big(\frac{\sqrt\lambda_{0}}{\epsilon}{\ell}s\Big)\,y''_{j}(s)\,{\rm d}s
  \\
  &=-\frac{\epsilon}{\sqrt{\lambda_0}{\ell}}\Big[\cos\Big(\frac{\sqrt{\lambda_0}}{\epsilon}\,{\ell}\,\vartheta\Big)\,y_j(\vartheta)\,-\,y_j(0)\Big]\,+\,
  \frac{\epsilon^2}{\lambda_0{\ell}^2}\,\sin\Big(\frac{\sqrt{\lambda_0}}{\epsilon}\,{\ell}\,\vartheta\Big)\,y'_j(\vartheta)
  \\
 &  \qquad-\frac{\epsilon^2}{\lambda_0{\ell}^2}\int_{0}^{\vartheta}\sin\Big(\frac{\sqrt\lambda_{0}}{\epsilon}\,{\ell}\,s\Big)
 \big[y''_{j}(s)\,+\,\mathfrak{q}_1(s)\,y'_{j}(s)+\mathfrak{q}_2(s)\,y_{j}(s)\big]\,{\rm d}s
 \\
  &  \qquad\quad+\frac{\epsilon^2}{\lambda_0{\ell}^2}\int_{0}^{\vartheta}\sin\Big(\frac{\sqrt\lambda_{0}}{\epsilon}\,{\ell}\,s\Big)
  \big[\mathfrak{q}_1(s)\,y'_{j}(s)\,+\,\mathfrak{q}_2(s)\,y_{j}(s)\big]\,{\rm d}s
 \\
 &=\frac{\epsilon^2}{\lambda_0{\ell}^2}\int_{0}^{\vartheta}
 \sin\Big(\frac{\sqrt\lambda_{0}}{\epsilon}{\ell}s\Big)\lambda_j y_j(s)\,{\mathrm d}s
 \,-\,\frac{\epsilon}{\sqrt{\lambda_0}{\ell}}
 \Big[\cos\Big(\frac{\sqrt{\lambda_0}}{\epsilon}\,{\ell}\,\vartheta\Big)\,y_j(\vartheta)-y_j(0)\Big]
  \\
&\quad\,+\,\frac{\epsilon^2}{\lambda_0{\ell}^2}\,\sin\Big(\frac{\sqrt{\lambda_0}}{\epsilon}\,{\ell}\,\vartheta\Big)\,y'_j(\vartheta)\,+\,\frac{\epsilon^2}{\lambda_0{\ell}^2}
 \int_{0}^{\vartheta}\sin\Big(\frac{\sqrt\lambda_{0}}{\epsilon}{\ell}s\Big)
 \big[\mathfrak{q}_1(s)\,y'_{j}(s)+\mathfrak{q}_2(s)\,y_{j}(s)\big]\,{\rm d}s,
\end{align*}
we can get
 \begin{equation*}
  \begin{split}
   \Big(1-\frac{\epsilon^2\lambda_j}{\lambda_0{\ell}^2}\Big)\,\Phi_{j,1}(\vartheta)
   =-&\frac{\epsilon}{\sqrt{\lambda_0}{\ell}}\,\Big[\cos\Big(\frac{\sqrt{\lambda_0}}{\epsilon}\,{\ell}\,\vartheta\Big)\,y_j(\vartheta)\,-\,y_j(0)\Big]
   \,+\,
  \frac{\epsilon^2}{\lambda_0{\ell}^2}\,\sin\Big(\frac{\sqrt{\lambda_0}}{\epsilon}\,{\ell}\,\vartheta\Big)\,y'_j(\vartheta)
  \\
  &\,+\,\frac{\epsilon^2}{\lambda_0{\ell}^2}\int_{0}^{\vartheta}\sin\Big(\frac{\sqrt\lambda_{0}}{\epsilon}\,{\ell}\,s\Big)
  \big[\mathfrak{q}_1(s)\,y'_{j}(s)+\mathfrak{q}_2(s)\,y_{j}(s)\big]\,{\rm d}s.
    \end{split}
 \end{equation*}
This implies that
\begin{equation}\label{9.5}
  |\Phi_{j,1}(\vartheta)|\leq \frac{C\epsilon(\epsilon j+1)}{|\lambda_0{\ell}^2-\epsilon^2\lambda_j|}.
\end{equation}
Similarly, it can be derived
\begin{equation}\label{9.6}
  |\Phi_{j,2}(\vartheta)|\leq \frac{C\epsilon(\epsilon j+1)}{|\lambda_0{\ell}^2-\epsilon^2\lambda_j|}.
\end{equation}

\medskip
By using the gap condition (\ref{gapcondition}), we can write $\hat{\bar{\alpha}}_2$ in the form
\begin{equation*}
  \hat{\bar{\alpha}}_2=\bar{K}_1\,\hat{\varpi}(\vartheta) \hat{\chi}(\vartheta)\,\hat{\beta}(\vartheta)\,
   \sin\Big(\frac{\sqrt{\lambda_0}}{\epsilon}\,{\ell}\,\vartheta\Big)\,+\,\bar{K}_2\, \hat{\varpi}(\vartheta) \hat{\chi}(\vartheta)\,\hat{\beta}(\vartheta)
   \cos\Big(\frac{\sqrt{\lambda_0}}{\epsilon}\,{\ell}\,\vartheta\Big),
\end{equation*}
for parameters $\bar{K}_1 ,\bar{K}_2 $ depending on $\epsilon$, which are also bounded by a universal constant independent of $\epsilon$.
Hence,
\begin{equation*}
\begin{split}
  \bar{c}_j&=\int_{0}^{1}\Big[\bar{K}_1
   \sin\Big(\frac{\sqrt{\lambda_0}}{\epsilon}\,{\ell}\,\vartheta\Big)\,+\,\bar{K}_2\,
   \cos\Big(\frac{\sqrt{\lambda_0}}{\epsilon}\,{\ell}\,\vartheta\Big)\Big]
   \,\hat{\varpi}(\vartheta) \,\hat{\chi}(\vartheta)\,\frac{{\ell}^2}{\hat{\beta}(\vartheta)}\,\eta(\vartheta)\,y_j(\vartheta)\,{\rm d}\vartheta
   \\
   &=\bar{K}_1\,{\ell}^2\,\int_{0}^{1}\,\hat{\varpi}(\vartheta) \hat{\chi}(\vartheta)\,\hat{\beta}^{-1}(\vartheta)\,\Phi'_{j,1}(\vartheta)\,\eta(\vartheta)\,{\rm d}\vartheta
   \,+\,
   \bar{K}_2\,{\ell}^2\,\int_{0}^{1}\,\hat{\varpi}(\vartheta) \hat{\chi}(\vartheta)\,\hat{\beta}^{-1}(\vartheta)\,\Phi'_{j,2}(\vartheta)\,\eta(\vartheta)\,{\rm d}\vartheta.
   \end{split}
\end{equation*}
Integrating once by parts and using (\ref{9.5})-(\ref{9.6}), we obtain
\begin{equation}
\label{cn}
 |\bar{c}_j|\leq \frac{C\epsilon(\epsilon j +1)}{|\lambda_0{\ell}^2-\epsilon^2\lambda_j|}\Big\{\|\eta'\|_{L^2}+\|\eta\|_{L^2}\Big\}.
\end{equation}
A similar approach will imply
\begin{equation*}
 |\bar{d}_j|\leq \frac{C\epsilon(\epsilon j +1)}{|\lambda_0{\ell}^2-\epsilon^2\lambda_j|}\Big\{\|\eta'\|_{L^2}+\|\eta\|_{L^2}\Big\}.
\end{equation*}
The estimate of $\tilde{d}_j$ can be showed as
\begin{align}
  |\tilde{d}_j|&=\Bigg|\int_{0}^{1}\frac{\hat{\tilde{\alpha}}_1{\ell}}{\hat{\beta}(\vartheta)}\,\eta'(\vartheta)\,y_j(\vartheta)\,{\rm d}\vartheta\Bigg|
  \nonumber
  \\
  &=2 \varrho_{1}^{-1}{\ell}\,\Bigg|\displaystyle{\int_{0}^{1}\m}\hat{\chi}(\vartheta)\,\hat{\beta}^{-1}(\vartheta)\,
  \eta'(\vartheta)\,\phi_{22,xz}\Bigg(x,\frac{ \mathfrak{a}^{-1}(\ell \vartheta)}{\e}\Bigg)\,w_x\,y_j(\vartheta)\,{\rm d}x\,{\rm d}\vartheta\Bigg|
  \label{dn}
  \\
  &\leq C\,\Bigg[\int_{0}^{1}\m\big|\hat{\chi}(\vartheta)\,\hat{\beta}^{-1}(\vartheta)\,\eta'(\vartheta)\,y_j(\vartheta)\,w_x\big|^2\,{\rm d}x\,{\rm d}\vartheta\Bigg]^{\frac{1}{2}}
  \Bigg[\int_{0}^{1}\m\Big|\phi_{22,xz}\Bigg(x,\frac{ \mathfrak{a}^{-1}(\ell \vartheta)}{\e}\Bigg)\Big|^2\,{\rm d}x\,{\rm d}\vartheta\Bigg]^{\frac{1}{2}}
    \nonumber
  \\
  &\leq C \,\e^{\frac{1}{2}}\,\|\eta'\|_{L^2},  \nonumber
\end{align}
where $\mathfrak{a}^{-1}$ is the inverse of the map $\mathfrak{a}$ given in (\ref{a(theta)}).
Similar estimates hold for $\tilde{c}_j$
\begin{equation*}
 |\tilde{c}_j|\leq C \,\e^{\frac{1}{2}}\,\|\eta\|_{L^2}.
\end{equation*}

\medskip
Whence,
from the equations
\begin{equation*}
  \begin{split}
  -\lambda_j a_j\,+\,c_j\,+\,d_j\,=\,{\mathbf g}_j,
  \\
  \bar{c}_j\,+\,\tilde{c}_j\,=\,c_j,
  \quad
   \bar{d}_j\,+\,\tilde{d}_j\,=\,d_j,
  \end{split}
\end{equation*}
and estimates of the Fourier coefficients $\bar{c}_j$, $\bar{d}_j$, $\tilde{c}_j$ and $\tilde{d}_j$, we get
\begin{equation}
\label{an}
  |a_j|\,\leq\,\Big|\frac{{\mathbf g}_j}{\lambda_j}\Big|+ \frac{C\epsilon(\epsilon j+1 )}{|\lambda_j(\lambda_0{\ell}^2-\epsilon^2\lambda_j)|}\times\{\|\eta'\|_{L^2}+\|\eta\|_{L^2}\}
  \,+\,
  \frac{C \,\e^{\frac{1}{2}}}{\lambda_j}\times\{\|\eta'\|_{L^2}+\|\eta\|_{L^2}\}.
\end{equation}
Moreover, from the asymptotic expression of $\lambda_j$ in (\ref{expression of eigenvalue}), it can be shown
\begin{equation*}
 \|\eta'\|_{L^2}^{2}+\|\eta\|_{L^2}^{2}\,\leq\, C\, \|{\mathbf g}\|_{L^2}^{2}+C_*\,\sum_{j}\frac{j^2\epsilon^2(\epsilon j+1)^2}{|\lambda_j|^2(\lambda_0{\ell}^2-\epsilon^2\lambda_j)^2}
 \times\{ \|\eta'\|_{L^2}^{2}+\|\eta\|_{L^2}^{2}\},
\end{equation*}
where the positive constant $C_*$ does not depend on $\epsilon$.
%
%\medskip
From the asymptotic expression of $\lambda_j$,
there exists a positive constant $\epsilon_0$ such that, for all positive $\epsilon<\epsilon_0$  satisfying (\ref{gapcondition}), elementary analysis will imply the following estimates
\begin{equation*}
  \begin{split}
  \sum\limits_{2\epsilon^2\lambda_j\geq3\lambda_0{\ell}^2}\frac{j^2\epsilon^2(\epsilon j+1)^2}{|\lambda_j|^2(\lambda_0{\ell}^2-\epsilon^2\lambda_j)^2}&\,\leq\,C\,\epsilon^2,
  \\
  \sum\limits_{\lambda_0{\ell}^2<2\epsilon^2\lambda_j<3\lambda_0{\ell}^2}\frac{j^2\epsilon^2(\epsilon j+1)^2}{|\lambda_j|^2(\lambda_0{\ell}^2-\epsilon^2\lambda_j)^2}&\,\leq\,C\,\epsilon,
  \\
  \sum\limits_{2\epsilon^2\lambda_j\leq\lambda_0{\ell}^2}\frac{j^2\epsilon^2(\epsilon j+1)^2}{|\lambda_j|^2(\lambda_0{\ell}^2-\epsilon^2\lambda_j)^2}&\,\leq \,C\,\epsilon^2.
\end{split}
\end{equation*}
Hence, we can prove that
\begin{equation*}
 \|\eta'\|_{L^2}^{2}+\|\eta\|_{L^2}^{2}\,\leq\, C \,\|{\mathbf g}\|_{L^2}^{2}.
\end{equation*}
Since
\begin{equation*}
  \|\eta''\|_{L^2}\,\leq\, C\,\big( \|{\mathbf g}\|_{L^2}+ \|\eta'\|_{L^2}+\|\eta\|_{L^2}^{2}\big),
\end{equation*}
the final result then follows easily.

\medskip
(2). If ${\mathbf g}(\theta)=G_1(\theta/\epsilon)+G_2(\theta/\epsilon)+G_3(\theta/\epsilon)$, using the same argument as the proof of formulas (\ref{cn}) and (\ref{dn}), we can easily carry out the estimate of ${\mathbf g}_j$ in the form
$$
|{\mathbf g}_j|\leq \frac{C\epsilon(\epsilon j+1)}{|\lambda_0{\ell}^2-\epsilon^2\lambda_j|}+C \e^{\frac{1}{2}}.
$$
For all positive $\epsilon<\epsilon_0$ satisfying (\ref{gapcondition}), a similar analysis will imply the following estimates
\begin{equation*}
  \begin{split}
  \sum\limits_{2\epsilon^2\lambda_j\geq3\lambda_0{\ell}^2}\frac{\epsilon(\epsilon j+1)}{|\lambda_j(\lambda_0{\ell}^2-\epsilon^2\lambda_j)|}&\,\leq\,C \e^{\frac{1}{2}},
  \\
  \sum\limits_{\lambda_0{\ell}^2<2\epsilon^2\lambda_j<3\lambda_0{\ell}^2}\frac{\epsilon(\epsilon j+1)}{|\lambda_j(\lambda_0{\ell}^2-\epsilon^2\lambda_j)|}&\,\leq\,C \e^{\frac{1}{2}},
  \\
  \sum\limits_{2\epsilon^2\lambda_j\leq\lambda_0{\ell}^2}\frac{\epsilon(\epsilon j+1)}{|\lambda_j(\lambda_0{\ell}^2-\epsilon^2\lambda_j)|}&\,\leq \,C\,\epsilon^{\frac{1}{2}}.
\end{split}
\end{equation*}
Therefore, from (\ref{an}) and the estimate of ${\mathbf g}_j$, we get
\begin{equation*}
 \|\eta'\|_{L^2}^{2}+\|\eta\|_{L^2}^{2}
 \,\leq\,
 C\, {\epsilon}+C_*\,\sum_{j}\frac{j^2\epsilon^2(\epsilon j+1)^2}{|\lambda_j|^2(\lambda_0{\ell}^2-\epsilon^2\lambda_j)^2}
 \times\{ \|\eta'\|_{L^2}^{2}+\|\eta\|_{L^2}^{2}\},
\end{equation*}
So, we can prove that
\begin{equation*}
 \|\eta'\|_{L^2}+\|\eta\|_{L^2}\,\leq\, C \,\e^{\frac{1}{2}},
\end{equation*}
the final result then follows easily.
   \end{proof}

\medskip
Secondly, we consider the following problem
\begin{equation}
\label{equation of e}
  \begin{split}
  \,\epsilon^2\,\big[\beta^{-2}\,e{''} \,+\,\hbar_5(\theta)\,e'\big]\,+\,\lambda_0 \,e \,=\,\tilde{{\mathbf g}},
  \qquad\forall\, 0<\theta<1,
  \\
    e'(0)\,+\,\tilde{b}_5\,e(0)\,=\,0,
    \qquad\quad
   e'(1)\,+\,\tilde{b}_6\,e(1)\,=\,0.
  \end{split}
\end{equation}
\begin{lemma}\label{lemma of e}
 If $\tilde{{\mathbf g}}\in L^{2}(0,1)$, then for all small $\epsilon$ satisfying (\ref{gapcondition}) there is a unique solution $e\in H^2(0,1)$ to problem (\ref{equation of e}), which satisfies
 $$\|e\|_b\leq C\,\epsilon^{-1}\,\|\tilde{{\mathbf g}}\|_{L^2(0,1)}.
 $$
 Moreover, if $\tilde{{\mathbf g}}\in H^{2}(0,1)$, then
 \begin{equation}
\epsilon^2\,\|e''\|_{L^2(0,1)}\,+\,\epsilon\,\|e'\|_{L^2(0,1)}\,+\,\|e\|_{L^\infty(0,1)}\,\leq\, C\, \|\tilde{{\mathbf g}}\|_{H^2(0,1)}.
 \end{equation}
 \end{lemma}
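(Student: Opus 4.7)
\medskip
\noindent\textit{Proof proposal.}
The plan is to mimic the Fourier-series strategy used in the second half of Lemma \ref{lemma of f}, adapted to the singularly perturbed operator
$$
\mathbb{L}_2(e)\,=\,\epsilon^2\big[\beta^{-2}e''+\hbar_5(\theta)e'\big]+\lambda_0 e.
$$
First I would symmetrize the principal part: multiplying by a suitable integrating factor $\rho(\theta)>0$ (or equivalently changing variable $\vartheta=\mathfrak{a}(\theta)/\ell$ as in (\ref{mathfrak-a-function})), the operator $-[\beta^{-2}\partial_\theta^2+\hbar_5\partial_\theta]$ becomes a self-adjoint Sturm--Liouville operator $\mathcal{T}$ on $(0,1)$ with the Robin conditions $y'(0)+\tilde b_5\,y(0)=0$ and $y'(1)+\tilde b_6\,y(1)=0$. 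Classical theory then provides an $L^2(0,1;\rho\,d\theta)$-orthonormal basis $\{y_j\}_{j\ge 0}$ of eigenfunctions with eigenvalues $\mu_j$, and, as in the classical asymptotic expansion invoked after (\ref{expression of eigenvalue}),
$$
\sqrt{\mu_j}\,=\,\frac{j\pi}{\ell}+O\!\left(\frac{1}{j}\right),\qquad \|y_j\|_{L^\infty(0,1)}\le C,\qquad \|y_j'\|_{L^2(0,1)}\le C\sqrt{\mu_j}.
$$

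Expanding $e=\sum_j a_j y_j$ and $\tilde g=\sum_j g_j y_j$ reduces (\ref{equation of e}) to the scalar identities $(\lambda_0-\epsilon^2\mu_j)\,a_j=g_j$, so that
$$
a_j\,=\,\frac{g_j}{\lambda_0-\epsilon^2\mu_j}.
$$
The crucial input is the gap condition (\ref{gapcondition}): recalling $\lambda_*=\lambda_0\ell^2/\pi^2$ from (\ref{definenumber}) and the asymptotic $\mu_j-j^2\pi^2/\ell^2=O(1)$, a direct translation yields
$$
\big|\lambda_0-\epsilon^2\mu_j\big|\,\ge\,c_0\,\epsilon\qquad\text{for every }j\in\mathbb N,
$$
provided $\epsilon<\epsilon_0$ satisfies (\ref{gapcondition}). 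Hence $|a_j|\le C\epsilon^{-1}|g_j|$ for every $j$, which by Parseval immediately gives the first claimed bound $\|e\|_b\le C\epsilon^{-1}\|\tilde g\|_{L^2(0,1)}$ (once one checks that the norm $\|\cdot\|_b$ defined for $e$ is controlled by the weighted $L^2$ norm of $\sum a_j y_j$, which follows routinely from the $L^\infty$-boundedness of $y_j$).

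For the sharper estimate under $\tilde g\in H^2(0,1)$, I would partition the spectrum $\{\mu_j\}$ into the three regimes
$$
\text{(I)}\ \epsilon^2\mu_j\le\tfrac{1}{2}\lambda_0,\qquad \text{(II)}\ \tfrac12\lambda_0<\epsilon^2\mu_j<2\lambda_0,\qquad\text{(III)}\ \epsilon^2\mu_j\ge 2\lambda_0,
$$
and treat each separately. Two integrations by parts against $\mathcal{T}y_j=\mu_j y_j$, using the Robin boundary conditions to kill the boundary terms, give the decay
$$
|g_j|\,\le\,C\,\mu_j^{-1}\,\|\tilde g\|_{H^2(0,1)}.
$$
In regimes (I) and (III) the denominator $|\lambda_0-\epsilon^2\mu_j|$ is comparable to $\lambda_0$ and to $\epsilon^2\mu_j$ respectively, so the sums $\sum a_j y_j$, $\sum\sqrt{\mu_j}\,|a_j|$ and $\sum\mu_j\,|a_j|$ converge absolutely and contribute $O(\|\tilde g\|_{H^2})$ to $\|e\|_{L^\infty}$, $\|e'\|_{L^2}$ and $\|e''\|_{L^2}$ respectively, with the proper $\epsilon$ weights built in. In regime (II), the number of indices $j$ is $O(1)$ (because $\mu_{j+1}-\mu_j\sim 2j\pi^2/\ell^2\sim 1/\epsilon$, so $\epsilon^2(\mu_{j+1}-\mu_j)\sim\epsilon$, and only finitely many modes lie in an $O(1)$-window around $\lambda_0$); each resonant mode contributes at most $|a_j|\le C\epsilon^{-1}\cdot\mu_j^{-1}\|\tilde g\|_{H^2}\le C\epsilon\,\|\tilde g\|_{H^2}$, which is negligible.

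The main obstacle is the handling of regime (II): without the gap condition one would lose all control of $a_j$ there, and without the $H^2$ hypothesis the extra factor $\mu_j^{-1}\sim\epsilon^2$ in $|g_j|$ would be unavailable. It is exactly the cooperation of (\ref{gapcondition}) with the extra smoothness of $\tilde g$ that converts the worst-case amplification $\epsilon^{-1}$ of regime (II) into a harmless $\epsilon$, and therefore yields the uniform-in-$\epsilon$ bound $\epsilon^2\|e''\|_{L^2}+\epsilon\|e'\|_{L^2}+\|e\|_{L^\infty}\le C\|\tilde g\|_{H^2(0,1)}$. Uniqueness follows because the map $g_j\mapsto a_j$ is a bijection under the gap condition, so the associated homogeneous problem admits only the trivial solution.
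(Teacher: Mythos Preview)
The paper itself does not prove this lemma: it cites Lemma~8.1 of \cite{dPKW}.  Your spectral-decomposition strategy (symmetrize, expand in Sturm--Liouville eigenfunctions, invoke the gap condition on the resonant denominators, split into non-resonant/near-resonant/super-resonant regimes) is precisely the approach of that reference, so in spirit you have recovered the intended proof.

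Two points, however, need correction.  First, your count in regime~(II) is off by a factor of $\epsilon^{-1}$: since the spacing $\epsilon^2(\mu_{j+1}-\mu_j)\sim\epsilon$ and the window $(\tfrac12\lambda_0,2\lambda_0)$ has width $O(1)$, there are $O(\epsilon^{-1})$ modes in~(II), not $O(1)$.  Fortunately this does not destroy the estimate: each such mode still satisfies $|a_j|\le C\epsilon^{-1}\cdot\mu_j^{-1}\|\tilde{\mathbf g}\|_{H^2}\le C\epsilon\|\tilde{\mathbf g}\|_{H^2}$, so summing $O(\epsilon^{-1})$ of them contributes $O(1)\|\tilde{\mathbf g}\|_{H^2}$ to $\sum_j|a_j|$ (hence to $\|e\|_{L^\infty}$), and the analogous sums for $\epsilon\|e'\|_{L^2}$ and $\epsilon^2\|e''\|_{L^2}$ also close---but the contribution is exactly the main term, not ``negligible'' as you wrote.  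Second, the Robin conditions are satisfied by the eigenfunctions $y_j$, not by $\tilde{\mathbf g}$; the boundary terms arising from the two integrations by parts therefore do not vanish.  They are, however, bounded by $C\|\tilde{\mathbf g}\|_{H^2(0,1)}$ via Sobolev embedding and the uniform bound $\|y_j\|_{L^\infty}\le C$, so the conclusion $|g_j|\le C\mu_j^{-1}\|\tilde{\mathbf g}\|_{H^2}$ survives.  With these two fixes your argument is complete and matches the cited proof.
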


\begin{proof}
See Lemma 8.1 in \cite{dPKW}.
\end{proof}

\medskip
Thirdly, we consider the following system
\begin{equation}
\label{L(f,e)}
\begin{split}
 {\mathbb L}(f,e)\,\equiv\,({\mathbb L}_1(f),\, {\mathbb L}_2(e))\,=\,({\mathbf g}(\theta),\tilde{{\mathbf g}}(\theta)),\qquad 0<\theta<1,
   \\
    f'(1)\,+\,k_2\,f(1)\,=\,\Gamma_{1}^{1},
    \qquad
      f'(0)\,+\,k_1\,f(0)\,=\,\Gamma_{0}^{1},
      \\
      e'(1)\,+\,\tilde{b}_6\,e(1)\,=\,\Gamma_{1}^{0},
      \qquad
       e'(0)\,+\,\tilde{b}_5\,e(0)\,=\,\Gamma_{0}^{0},
\end{split}
\end{equation}
where $\Gamma_{j}^{i},i,\,j=0,\,1$ are some constants.
\begin{lemma}
\label{lemma}
Under the non-degenerate condition (\ref{non-degeneracy}), if ${\mathbf g},\, \tilde{{\mathbf g}}\in L^2(0,1)$, then there exists $\epsilon_0$ such that for all $0<\epsilon<\epsilon_0$ satisfying (\ref{gapcondition}) there is a unique solution $(f,e)$ in $H^2(0,1)$ to problem (\ref{L(f,e)}) which satisfies
\begin{equation*}
  \|f\|_{*}\,+\,\|e\|_{**}\leq \Big[\|{\mathbf g}\|_{L^2(0,1)}\,+\,\epsilon^{-1}\|\tilde{{\mathbf g}}\|_{L^2(0,1)}\,+\,\sum_{i,j=0}^{1}|\Gamma_{j}^{i}|\Big].
  \end{equation*}
\end{lemma}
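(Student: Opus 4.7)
The plan is to exploit the crucial observation that the operator $\mathbb{L}(f,e) = (\mathbb{L}_1(f), \mathbb{L}_2(e))$ is block-diagonal: $\mathbb{L}_1$ acts only on $f$ and $\mathbb{L}_2$ acts only on $e$, and the boundary conditions likewise separate. Hence the system decouples into two independent two-point boundary value problems, to which Lemma~\ref{lemma of f} and Lemma~\ref{lemma of e} apply directly, provided we first reduce to the homogeneous boundary case.

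First I would construct an explicit lift. Pick $f_0(\theta) = A_0 + A_1\theta + A_2\theta^2$ with coefficients determined so that $f_0'(0)+k_1 f_0(0) = \Gamma_0^1$ and $f_0'(1)+k_2 f_0(1) = \Gamma_1^1$; this is a $2\times 2$ linear system whose non-degeneracy is automatic for the chosen ansatz (or more robustly, one can take a smooth cutoff construction), yielding $\|f_0\|_* \le C(|\Gamma_0^1| + |\Gamma_1^1|)$. Analogously, choose $e_0 \in H^2(0,1)$ satisfying the two $e$-boundary conditions with $\|e_0\|_{**} \le C(|\Gamma_0^0| + |\Gamma_1^0|)$; here because the norm $\|\cdot\|_{**}$ weights $e''$ by $\epsilon^2$ and $e'$ by $\epsilon$, an $\epsilon$-independent polynomial lift trivially satisfies this bound.

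Next set $\tilde f := f - f_0$ and $\tilde e := e - e_0$. Then $(\tilde f, \tilde e)$ must satisfy the decoupled pair
\begin{equation*}
\mathbb{L}_1(\tilde f) = \mathbf{g} - \mathbb{L}_1(f_0), \qquad \mathbb{L}_2(\tilde e) = \tilde{\mathbf{g}} - \mathbb{L}_2(e_0),
\end{equation*}
with \emph{homogeneous} boundary conditions. Since $\mathbb{L}_1(f_0) \in L^2(0,1)$ with norm controlled by $\|f_0\|_*$, Lemma~\ref{lemma of f}(1) applies under the gap condition (\ref{gapcondition}) and the non-degeneracy assumption (\ref{nondegeneracy}), yielding a unique $\tilde f \in H^2$ with
\begin{equation*}
\|\tilde f\|_* \le C\bigl(\|\mathbf{g}\|_{L^2} + \|f_0\|_*\bigr).
\end{equation*}
Similarly, $\mathbb{L}_2(e_0) \in L^2(0,1)$ with $L^2$-norm bounded by a constant times $\|e_0\|_{**}$ (in fact by $\lambda_0\|e_0\|_{L^\infty} + O(\epsilon)\|e_0\|_{H^2}$), so Lemma~\ref{lemma of e} gives a unique $\tilde e \in H^2$ with
\begin{equation*}
\|\tilde e\|_{**} \le C\,\epsilon^{-1}\bigl(\|\tilde{\mathbf{g}}\|_{L^2} + \|\mathbb{L}_2(e_0)\|_{L^2}\bigr) \le C\,\epsilon^{-1}\|\tilde{\mathbf{g}}\|_{L^2} + C\,\epsilon^{-1}\|e_0\|_{**}.
\end{equation*}

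Adding the lifts back, $f = \tilde f + f_0$ and $e = \tilde e + e_0$ solve the original problem, and the triangle inequality yields the stated bound. The main (and only) delicate point is the $\epsilon^{-1}$ appearing in front of $\|e_0\|_{**}$: one must verify that the lift $e_0$ can be chosen so that $\|e_0\|_{**} \le C(|\Gamma_0^0|+|\Gamma_1^0|)$ \emph{without} picking up negative powers of $\epsilon$, which is why an $\epsilon$-independent polynomial (or smooth fixed function) lift is preferable to one built from the characteristic roots of $\mathbb{L}_2$. Uniqueness of $(f,e)$ is inherited from the uniqueness clauses in Lemmas~\ref{lemma of f} and \ref{lemma of e} applied to the homogeneous problems, completing the proof.
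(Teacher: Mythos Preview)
Your decoupling observation is correct and is indeed the heart of the argument (the paper itself simply cites Lemma~7.3 of \cite{wei-yang} without details). The lift-and-reduce strategy works cleanly for the $f$-problem. However, there is a genuine gap in your treatment of the $e$-boundary data.

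You correctly arrive at
\[
\|\tilde e\|_{**}\;\le\; C\epsilon^{-1}\bigl(\|\tilde{\mathbf g}\|_{L^2}+\|\mathbb L_2(e_0)\|_{L^2}\bigr),
\]
and you flag the $\epsilon^{-1}$ in front of the lift term as ``the delicate point.'' But your proposed resolution---taking $e_0$ to be an $\epsilon$-independent polynomial so that $\|e_0\|_{**}\le C(|\Gamma^0_0|+|\Gamma^0_1|)$---does not fix it. For such a lift, $\mathbb L_2(e_0)=\epsilon^2(\beta^{-2}e_0''+\hbar_5 e_0')+\lambda_0 e_0$ has $L^2$-norm dominated by $\lambda_0\|e_0\|_{L^2}\sim |\Gamma^0_0|+|\Gamma^0_1|$, so after multiplying by $\epsilon^{-1}$ you obtain $C\epsilon^{-1}(|\Gamma^0_0|+|\Gamma^0_1|)$, one power of $\epsilon$ worse than the stated bound. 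The $\epsilon^{-1}$ loss comes from the first clause of Lemma~\ref{lemma of e}, not from the lift, so no choice of $\epsilon$-independent lift alone removes it.

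The fix is to split the right-hand side and invoke the \emph{second} clause of Lemma~\ref{lemma of e}. Write $\tilde e=\tilde e_1+\tilde e_2$ with $\mathbb L_2(\tilde e_1)=\tilde{\mathbf g}$ and $\mathbb L_2(\tilde e_2)=-\mathbb L_2(e_0)$, both with homogeneous boundary conditions. For $\tilde e_1$ use the $L^2$-data estimate, giving $\|\tilde e_1\|_{**}\le C\epsilon^{-1}\|\tilde{\mathbf g}\|_{L^2}$. For $\tilde e_2$, note that your polynomial $e_0$ makes $\mathbb L_2(e_0)$ a smooth function with $\|\mathbb L_2(e_0)\|_{H^2(0,1)}\le C(|\Gamma^0_0|+|\Gamma^0_1|)$ uniformly in $\epsilon$; the $H^2$-data estimate in Lemma~\ref{lemma of e} then yields $\|\tilde e_2\|_{**}\le C\|\mathbb L_2(e_0)\|_{H^2}\le C(|\Gamma^0_0|+|\Gamma^0_1|)$, with no $\epsilon^{-1}$. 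Combining these two pieces with $\|e_0\|_{**}$ gives the claimed bound.
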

\begin{proof}
See Lemma 7.3 in \cite{wei-yang}.
\end{proof}

\medskip
{\textbf {Proof of Theorem \ref{theorem 1.1}.}} Let us observe now that  the linear operator
\begin{equation*}
  {\mathbb L}^*(f,e)=\big({\mathbb L}_1(f)-\hbar_3(\epsilon z)e-\epsilon^2\hbar_4(\epsilon z)e{''}, \, {\mathbb L}_2(e)\big),
\end{equation*}
is invertible with bounds for ${\mathbb L}^*(f,e)=({\mathbf g},\tilde{{\mathbf g}})$ given by
\begin{equation*}
   \|f\|_{*}\,+\,\|e\|_{**}\leq C\|{\mathbf g}\|_{L^2(0,1)}\,+\,\epsilon^{-1}\|\tilde{{\mathbf g}}\|_{L^2(0,1)}.
\end{equation*}
From (\ref{Lipschitz of A}), $\epsilon\,\mathbf{A}_{1,\epsilon}$ and $\epsilon^2\,\mathbf{A}_{2,\epsilon}$ are contraction mappings of their arguments in $\mathcal{F}$. By Banach Contraction Mapping Theorem and Lemma \ref{lemma}, we can solve the nonlinear problem
\begin{equation*}
\big[{\mathbb L}^*-(\epsilon\,\mathbf{A}_{1,\epsilon},\epsilon^2\,\mathbf{A}_{2,\epsilon} )\big](f,e)=({\mathbf g},\tilde{{\mathbf g}})
\end{equation*}
with the boundary conditions defined in (\ref{boundary condition 1})-(\ref{boundary condition 4}) is uniquely solvable for $(f,e)\in \mathcal{F}$ provided that
$$
\|{\mathbf g}\|_{L^{2}(0,1)}\,<\,\epsilon^{1/2+\rho}
\quad
\mbox{and}
\quad
\|\tilde{{\mathbf g}}\|_{L^{2}(0,1)}\,<\,\epsilon^{3/2+\rho},
 $$
with some small positive constants $\rho$.
 The desired result for full problem (\ref{f})-(\ref{boundary condition 4}) then follows directly from Schauder's fixed-point Theorem.
\qed

\bigskip
\bigskip
%\begin{acknowledgements}
{\bf Acknowledgements: }
J. Yang is supported by  NSFC(No.11371254).
Part of this work was done when the authors visited Chern Institute of Mathematics, Nankai University in summer of 2014:
we are very grateful to the institution for the kind hospitality.
\qed
%\end{acknowledgements}

\medskip
\begin{appendices}

\section{Proofs of  Lemmas \ref{lemma2.1}  and \ref{derivativeofF} }\label{appendixA}
\setcounter{equation}{0}

{\textbf{Proof of Lemma \ref{lemma2.1}: }}
In fact, the curves can be expressed in the following forms
\begin{align}\label{curves}
{\mathcal C}_1:&~{\mathbb H}\big({\tilde t},{\tilde \varphi}_1({\tilde t})\big)
\,=\,\g\big({\tilde \varphi}_1({\tilde t})\big)+{\tilde t}\,n\big({\tilde \varphi}_1({\tilde t})\big),
\\
{\mathcal C}_2:&~{\mathbb H}\big({\tilde t},{\tilde \varphi}_2({\tilde t})\big)
\,=\,\g\big({\tilde \varphi}_2({\tilde t})\big)+{\tilde t}\,n\big({\tilde \varphi}_2({\tilde t})\big),
\\
\G:&~{\mathbb H}(0,\t{\theta})\,=\,\g(\t{\theta}).
\end{align}
It follows that the tangent vectors of ${\mathcal C}_1$ at $P_1$ can be written as
$$
\frac {{\mathrm d} {\mathcal C}_1}{{\mathrm d}{\tilde t}}\Big|_{{\tilde t}=0}
\,=\,\frac {\partial \g}{\partial \t{\theta}}\Big|_{{\tilde\theta}\,=\,{{\tilde\varphi}_1(0)}}
\cdot \frac {{\mathrm d} {{\tilde\varphi}_1}}{{\mathrm d}  {\tilde t}}\Big|_{{\tilde t}=0}\,+\,n\big({\tilde \varphi}_1(0)\big),
$$
and the tangent vector of $\Gamma$ at $P_1$ is
$$
\frac {\partial \g}{\partial \t{\theta}}\Big|_{\t{\theta}\,=\,{\tilde \varphi}_1(0)}.
$$
According to the condition:~$\G\bot \partial \Omega$,
we have that
$$
\lf< \frac {{\mathrm d}{\mathcal C}_1}{{\mathrm d}{\tilde t}}\Big|_{{\tilde t}=0},\,\, \frac {\partial \g}{\partial \t{\theta}}\Big|_{\t{\theta}={\tilde \varphi}_1(0)}\ri>=0.
$$
Combining with
 $$\lf<n({\tilde \varphi}_1(0)),\,\,\frac {\partial \g}{\partial \t{\theta}}\Big|_{\t{\theta}={\tilde \varphi}_1(0)}\ri>=0,$$
we have
$$ \t{\varphi}_1'(0)=0.$$
Similarly, we can show ${\tilde \varphi}_2'(0)=0$.

\medskip
In the coordinate system $(y_1,y_2)$,
$\gamma(\tilde{\theta})$ and $n(\tilde{\theta})$ can be expressed as follows
\begin{equation*}
  \gamma(\tilde{\theta})=\big(\gamma_1(\tilde{\theta}), \gamma_2(\tilde{\theta})\big),
\qquad
  n(\tilde{\theta})=\big(n_1(\tilde{\theta}), n_2(\tilde{\theta})\big).
\end{equation*}
The relations
\begin{align*}
|\gamma_1'(\tilde{\theta})|^2\,+\,|\gamma_2'(\tilde{\theta})|^2=1,
\qquad
|n_1(\tilde{\theta})|^2+|n_2(\tilde{\theta})|^2=1,
\qquad
\gamma_1'(\tilde{\theta})n_1(\tilde{\theta})+ \gamma_2'(\tilde{\theta})n_2(\tilde{\theta})=0,
\end{align*}
will give that
\begin{align*}
\gamma_1'(\tilde{\theta})n_2(\tilde{\theta})-\gamma_2'(\tilde{\theta})n_1(\tilde{\theta})\,=\,1,
\end{align*}
provided that the sign is taken as '+' by suitable choice of the natural parameter of $\Gamma$.
Then, the curve ${\mathcal C}_1$ can be expressed in the following form
\begin{equation*}
\begin{split}
  {\mathcal C}_1:~{\mathbb H}\big({\tilde t},{\tilde \varphi}_1({\tilde t})\big)
\,
%=\,\g\big({\tilde \varphi}_1({\tilde t})\big)+{\tilde t}\,n\big({\tilde \varphi}_1({\tilde t})\big)
&=\Big(\gamma_1\big({\tilde \varphi}_1({\tilde t})\big)+\tilde{t}n_1\big({\tilde \varphi}_1({\tilde t})\big),\,\, \gamma_2\big({\tilde \varphi}_1({\tilde t})\big)+\tilde{t}n_2\big({\tilde \varphi}_1({\tilde t})\big)\Big)
\\
&\equiv\,\big( y_1(\tilde{t})\,,\,y_2(\tilde{t})\big).
\end{split}
\end{equation*}
The calculations
\begin{align*}
  y_1'({\tilde t})=&\,\gamma_1'({\tilde\varphi}_1)\cdot \frac {{\mathrm d}{{\tilde\varphi}_1}}{{\mathrm d} {\tilde t}}+n_1({\tilde\varphi}_1)+\tilde{t}\cdot n_1'({\tilde\varphi}_1)\cdot \frac {{\mathrm d} {{\tilde\varphi}_1}}{{\mathrm d} {\tilde t}},
\end{align*}
\begin{align*}
y_2'({\tilde t})=&\,\gamma_2'({\tilde\varphi}_1)\cdot \frac {{\mathrm d}{{\tilde\varphi}_1}}{{\mathrm d} {\tilde t}}+n_2({\tilde\varphi}_1)+\tilde{t}\cdot n_2'({\tilde\varphi}_1)\cdot \frac {{\mathrm d} {{\tilde\varphi}_1}}{{\mathrm d} {\tilde t}},
\end{align*}
\begin{align*}
y_1''({\tilde t})=&\,\gamma_1''({\tilde\varphi}_1)\cdot \Big(\frac {{\mathrm d} {{\tilde\varphi}_1}}{{\mathrm d} {\tilde t}}\Big)^2+\gamma_1'({\tilde\varphi}_1)\cdot \frac {{\mathrm d}^2 {{\tilde\varphi}_1}}{{\mathrm d} {\tilde t}^2}+2\,n_1'({\tilde\varphi}_1)\cdot\frac {{\mathrm d} {{\tilde\varphi}_1}}{{\mathrm d} {\tilde t}}
 \\
 &+\tilde{t}\,n_1''({\tilde\varphi}_1)\cdot\Big(\frac {{\mathrm d} {{\tilde\varphi}_1}}{{\mathrm d} {\tilde t}}\Big)^2+\tilde{t}\,n_1'({\tilde\varphi}_1)\cdot\frac {{\mathrm d}^2 {{\tilde\varphi}_1}}{{\mathrm d} {\tilde t}^2},
 \end{align*}
 \begin{align*}
 y_2''({\tilde t})=&\,\gamma_2''({\tilde\varphi}_1)\cdot \Big(\frac {{\mathrm d} {{\tilde\varphi}_1}}{{\mathrm d} {\tilde t}}\Big)^2+\gamma_2'({\tilde\varphi}_1)\cdot \frac {{\mathrm d}^2 {{\tilde\varphi}_1}}{{\mathrm d} {\tilde t}^2}+2\,n_2'({\tilde\varphi}_1)\cdot\frac {{\mathrm d} {{\tilde\varphi}_1}}{{\mathrm d} {\tilde t}}
 \\
 &+\tilde{t}\,n_2''({\tilde\varphi}_1)\cdot\Big(\frac {{\mathrm d} {{\tilde\varphi}_1}}{{\mathrm d} {\tilde t}}\Big)^2+\tilde{t}\,n_2'({\tilde\varphi}_1)\cdot\frac {{\mathrm d}^2 {{\tilde\varphi}_1}}{{\mathrm d} {\tilde t}^2},
\end{align*}
imply that
 \begin{align*}
  (y_1'(t))^2+(y_2'(t))^2\Big|_{\tilde{t}=0}\,=\,|n(0)|^2\,=\,1,
 \end{align*}
and
\begin{align*}
 y_1'(\tilde{t})\,y_2''(\tilde{t})-y_1''(\tilde{t})\,y_2'(\tilde{t})\Big|_{\tilde{t}=0}\,
 =\,\big(n_1(0)\,\gamma_2'(0)\,-\,n_2(0)\,\gamma_1'(0)\big)\,{\tilde\varphi}_1''(0)\,=\,{\tilde\varphi}_1''(0).
\end{align*}
Therefore, the signed curvature of the curve $\mathcal{C}_1$ at the point $P_1$ is
\begin{equation*}
  k_1=\frac{\, y_1'(0)y_2''(0)-y_1''(0)y_2'(0)\,}{\big((y_1'(0))^2+(y_2'(0))^2\big)^{\frac{3}{2}}}
  =\t{\varphi}_1''(0).
\end{equation*}
Similarly, we can show $ k_2={\tilde \varphi}_2''(0)$.
\qed
\\

\medskip
\noindent
{\textbf{Proof of Lemma \ref{derivativeofF}:} } Indeed,  first comes the derivative of first order
\begin{align*}
\begin{aligned}
\frac{{\partial} F}{{\partial} t}(0,\theta)&\,=\,\Big[\gamma{\,'}(\Theta)\cdot\Theta_t
\,+\,n(\Theta)\,+\,tn'(\Theta)\cdot {\Theta}_t\Big]\Big|_{(0,\theta)}
\\
&\,=\,\gamma{\,'}(\Theta)\cdot 0\,+\,n(\Theta(0,\theta))
\\
&\,=\,n(\theta).
\end{aligned}
\end{align*}

So as $t$ is small enough, $\frac{{\partial} F}{{\partial} t}\neq 0$.
Consider the derivative of second order
\begin{equation*}
\begin{split}
q_1(\theta)&\,\equiv\, \frac{{\partial}^2F}{{\partial} t^2}(0,\theta)
\\
&\,=\,\lf[\gamma''(\Theta)\cdot (\Theta_t)^2\,+\,\gamma'(\Theta)\cdot \Theta_{tt}\,+\,2n'(\Theta)\cdot \Theta_{t}\,+\,tn''(\Theta)\cdot(\Theta_t)^2
\,+\,tn'(\Theta)\cdot\Theta_{tt}\ri]\Big|_{(0,\theta)}
\\
&\,=\,\gamma'(\theta)\cdot \Theta_{tt}(0,\theta)\;\;{ \bot n(\gamma(\theta))}.
\end{split}
\end{equation*}
Moreover, there hold
\begin{align*}
\begin{aligned}
q_1'(\theta)\,=\,&\Big[\gamma''(\Theta)\cdot\Theta_{\theta}\cdot\Theta_{tt}\,+\,\gamma'(\Theta)
\cdot\Theta_{tt\theta}|_{(0,\theta)}\Big]|_{(0,\theta)}
\\
\,=\,&\gamma''(\Theta)\cdot\Theta_{tt}|_{(0,\theta)}\,+\,\gamma'(\Theta)\Theta_{tt\theta}|_{(0,\theta)},
\end{aligned}
\end{align*}
and specially,
\begin{align*}
q_1'(0)\,=\,\gamma{''}(0){k}_1\,+\,\gamma{\,'}(0)(k_2-k_1),
\qquad
q_1'(1)\,=\,\gamma{''}(1){k}_2\,+\,\gamma{\,'}(1)(k_2-k_1).
\end{align*}

\medskip
The derivative of third order is
\begin{equation*}
\begin{split}
q_2(\theta)&\,\equiv\, \frac{{\partial}^3F}{{\partial} t^3}(0,\theta)
\\
&\,=\,\Big[
\gamma{'''}(\Theta)\cdot (\Theta_t)^3
+3\gamma{''}(\Theta)\cdot \Theta_t\cdot \Theta_{tt}
+\gamma{\,'}(\Theta)\cdot \Theta_{ttt}
+3n{''}(\Theta)\cdot (\Theta_t)^2
\\
&\qquad
+3n{'}(\Theta)\cdot \Theta_{tt}
\Big]\Big|_{(0,\theta)}
\\
&\,=\,\gamma{\,'}(\theta)\cdot \Theta_{ttt}(0,\theta)+3n{'}(\theta)\cdot \Theta_{tt}(0,\theta)
\qquad
 \bot n(\theta).
 \end{split}
\end{equation*}
This finishes the proof of the lemma.
\qed

\medskip
As a conclusion, as $t$ is small enough, there hold the asymptotic behavior
\begin{align}\label{expansionofFermi}
F(t,\theta)\,=\,\gamma(\theta)
+tn(\theta)+\frac {t^2}{2}q_1(\theta)+\frac {t^3}{6}q_2(\theta)+O(t^4),~~\forall\, \theta\in [0,1], ~t\in (-\delta_0,\delta_0),
\end{align}
where $\delta_0>0$ is a small constant.  This gives us that
$$\frac{{\partial} F}{{\partial} t}(t,\theta)\,=\,n(\theta)+t q_1(\theta)+\frac{t^2}{2}q_2(\theta)+O(t^3),$$
$$\frac{{\partial} F}{{\partial} \theta}(t,\theta)\,=\,\gamma'(\theta)-k(\theta)t\gamma{\,'}(\theta)+\frac{t^2}{2}q_1'(\theta)+\frac{t^3}{6}q_2'(\theta)+O(t^4).$$
Specially, there hold
$$\frac{{\partial} F}{{\partial} t}(t,0)\,=\,n(0)+t q_1(0)+\frac{t^2}{2}q_2(0)+O(t^3),$$
$$\frac{{\partial} F}{{\partial} t}(t,1)\,=\,n(1)+t q_1(1)+\frac{t^2}{2}q_2(1)+O(t^3),$$
$$\frac{{\partial} F}{{\partial} \theta}(t,0)\,=\,\gamma'(0)-k(0)t\gamma{\,'}(0)+\frac{t^2}{2}q_1'(0)+\frac{t^3}{6}q_2'(0)+O(t^4),$$
$$\frac{{\partial} F}{{\partial} \theta}(t,1)\,=\,\gamma'(1)-k(1)t\gamma{\,'}(1)+\frac{t^2}{2}q_1'(1)+\frac{t^3}{6}q_2'(1)+O(t^4).$$
These formulas will play an important role in the derivation of the local form of (\ref{original equation-01}), which will be given in Appendix \ref{appendixB}.

\medskip
\section{Local forms of the differential operators in (\ref{original equation-01})}\label{appendixB}
\setcounter{equation}{0}

 In this section, we are devoted to presenting the expressions of the differential operators  $\Delta$ and $\partial/\partial \nu$
in (\ref{original equation-01}).
The formulas will be provided in the local forms in the modified Fermi coordinates given in Section \ref{section2}.
%Their rescaling forms in (\ref{problemafterscaling}) will be also provided.

%\subsection{Differential operators expressed in modified Fermi coordinates}
\medskip
We first derive the metric matrix.
Note that
$$\gamma'(\theta)\perp n(\theta),
\quad
n'(\theta)\bot n(\theta),
\quad
q_1(\theta)\bot n(\theta),
\quad
q_2(\theta)\bot n(\theta).$$
As the first step, here are the computation of the metric matrix:
\begin{equation*}
\begin{split}
g_{11}&\,=\,\lf<\frac{{\partial} F}{{\partial} t},\, \frac{{\partial} F}{{\partial} t}\ri>
\\
&\,=\,\lf<n+tq_1+\frac{t^2}{2}q_2+O(t^3),\,\, n+tq_1+\frac{t^2}{2}q_2+O(t^3)\ri>
\\
&\,=\,1+t^2|q_1|^2+O(t^3),
\end{split}
\end{equation*}
and
\begin{equation*}
\begin{split}
g_{12}&\,=\,\lf<\frac{{\partial} F}{{\partial} t},\,\,\frac{{\partial} F}{{\partial} \theta}\ri>
\\
&\,=\,\lf<n+tq_1+\frac{t^2}{2}q_2+O(t^3),\,\,\gamma'-kt\gamma{'}+\frac{t^2}{2}q_1'+O(t^3)\ri>
\\
&\,=\,\frac{t^2}{2}<n,q_1'>+t<q_1, \gamma'>-t^2k<q_1, \gamma'>+\frac{t^2}{2}<q_2,\gamma'>+O(t^3)
\\
&\,=\,t<q_1, \gamma'>+\frac{t^2}{2}\big(<q_2,\gamma'>-k<q_1, \gamma'>\big)+O(t^3),
\end{split}
\end{equation*}
where we have used the fact
\begin{align*}
<q_1', n>-k<q_1, \gamma'>
\,=\,<q_1', n>+<q_1, n'>
\,=\,\frac{\partial}{\partial\theta}<q_1, n>
\,=\,0.
\end{align*}
The last element is
\begin{equation*}
\begin{split}
g_{22}&\,=\,\lf<\frac{{\partial} F}{{\partial} \theta},\,\,\frac{{\partial} F}{{\partial} \theta}\ri>
\\
&\,=\,\lf<\gamma'-kt\gamma{'}+\frac{t^2}{2}q_1'+O(t^3),\,\,\gamma'-kt\gamma{'}+\frac{t^2}{2}q_1'+O(t^3)\ri>
\\
&\,=\,1-2kt+t^2\lf(<q_1', \gamma'>+k^2\ri)+O(t^3).
\end{split}
\end{equation*}
So the  determinant  of the metric matrix is
\begin{equation*}
\begin{split}
g&\,=\,\mbox{det}(g_{ij})
\\
&\,=\,1-2kt+t^2\lf(<q_1', \gamma'>+k^2\ri)+t^2|q_1|^2-t^2<q_1, \gamma'>^2+O(t^3)
\\
&\,=\,1-2kt+t^2\lf(<q_1', \gamma'>+k^2\ri)+O(t^3),
\end{split}
\end{equation*}
where we have used $|q_1|^2-<q_1, \gamma'>^2=0$ due to the expression of $q_1$ in Lemma \ref{derivativeofF}.

\medskip
We turn to the computation of the inverse of the metric matrix.
By
$$
\big[1+at+bt^2+O(t^3)\big]^{-1}\,=\,1-at+(a^2-b)t^2+O(t^3),
$$
we have
\begin{align*}
g^{-1}\,\,=\,&\,1+2kt+t^2\lf(3k^2-<q_1', \gamma'>\ri)+O(t^3).
\end{align*}
Whence
\begin{equation*}
\begin{split}
g^{11}&\,=\,g_{22}\cdot g^{-1}
\\
&\,=\,1-2kt+t^2\lf(<q_1', \gamma'>+k^2\ri)+2kt-4k^2t^2
+t^2\lf(3k^2-<q_1', \gamma'>\ri)+O(t^3)
\\
&\,=\,1+O(t^3),
\\
\\
\end{split}
\end{equation*}
\begin{equation*}
\begin{split}
-g^{12}&\,=\,g_{12}\cdot g^{-1}
\\
&\,=\,t<q_1, \gamma'>+2kt^2<q_1, \gamma'>+\frac{t^2}{2}\lf(<q_2',\gamma{'}>-k<q_1, \gamma'>\ri)+O(t^3)\qquad\qquad\qquad
\\
&\,=\,t<q_1, \gamma'>+\frac{t^2}{2}\big(3k<q_1, \gamma'>+<q_2,\gamma'>\big)+O(t^3),
\\
\\
\end{split}
\end{equation*}

\noindent
\begin{equation*}
\begin{split}
g^{22}&\,=\,g_{11}\cdot g^{-1}
\,=\,1+2kt+t^2\lf(3k^2-<q_1', \gamma'>+|q_1|^2\ri)+O(t^3).\qquad\qquad\qquad\qquad
\end{split}
\end{equation*}

\medskip
%Note that if the function has the following asymptotic expansion
%$$f(s)\,=\,1+as+bs^2+O(s^3),\quad f(0)\,=\,1,
%$$
%then for $s$ close to zero
%$$
%\sqrt{f(s)}\,=\,1+\frac{a}{2}s+\frac12\lf(b-\frac14a^2\ri)s^2+O(s^3),
%$$
%$$
%\log f\,=\,as+(b-\frac12a^2)s^2+O(s^3).
%$$
We can get the following formulas
\begin{equation*}
\begin{split}
\sqrt{g}&\,=\,1-kt+\frac12t^2<q_1', \gamma'>+O(t^3),\qquad\qquad\qquad\qquad\qquad
\end{split}
\end{equation*}

\begin{equation*}
\begin{split}
(\sqrt{g})^{ -1}&\,=\,1+kt+\frac12t^2\lf(2k^2-<q_1', \gamma'>\ri)+O(t^3),\qquad\qquad\qquad\qquad
\end{split}
\end{equation*}

\begin{equation*}
\begin{split}
\sqrt{g^{22}}&\,=\,1+kt+\frac12t^2\Big(3k^2-<q_1', \gamma'>+|q_1|^2-\frac14\cdot 4k^2\Big)+O(t^3)
\\
&\,=\,1+kt+\frac12\big(2k^2-<q_1', \gamma'>+|q_1|^2\big)t^2+O(t^3),
\end{split}
\end{equation*}

\begin{equation*}
\begin{split}
\frac1{\sqrt{g^{22}}}&\,=\,1-kt+\Big(k^2-k^2+\frac12<q_1', \gamma'>-\frac12 |q_1|^2\Big)t^2+O(t^3)\qquad
\\
&\,=\,1-kt+\frac12\big(<q_1', \gamma'>-|q_1|^2\big)t^2+O(t^3).
\end{split}
\end{equation*}

\medskip
Now, recall the definition of Laplace-Beltrami operator in the form
\begin{equation}
\begin{split}\label{laplace0}
\triangle_{t,\theta}u\,=\,&\,g^{11}\frac{{\partial}^2 u}{{\partial} t^2}+2g^{12}\frac{{\partial}^2 u}{{\partial} t{\partial} \theta}+g^{22}\frac{{\partial}^2 u}{{\partial} \theta^2}
+(\sqrt{g})^{ -1}\left[\frac{{\partial}}{{\partial} t}(\sqrt{g}g^{11})+\frac{{\partial}}{{\partial} \theta}(\sqrt{g}g^{21})\right]\frac{{\partial} u}{{\partial} t}
\\
&+(\sqrt{g})^{ -1}\left[\frac{{\partial}}{{\partial} t}(\sqrt{g}g^{12})+\frac{{\partial}}{{\partial} \theta}(\sqrt{g}g^{22})\right]\frac{{\partial} u}{{\partial} \theta}.
\end{split}
\end{equation}
The last two coefficients in (\ref{laplace0}) can be computed as follows.
Since
\begin{equation*}
\begin{split}
\sqrt{g}\cdot g^{11}
&\,=\,1-kt+\frac{t^2}{2}<q_1', \gamma'>+O(t^3),
\end{split}
\end{equation*}we can get
\begin{equation*}
\begin{split}
\frac{{\partial}}{{\partial} t}(\sqrt{g}g^{11})&\,=\,-k+t<q_1', \gamma'>+O(t^2).
\end{split}
\end{equation*}\noindent
Since
\begin{equation*}
\begin{split}
\sqrt{g}\cdot g^{12}&\,=\,-t<q_1, \gamma'>-\frac{t^2}2(3k<q_1, \gamma'>+<\gamma', q_2>)+kt^2<q_1, \gamma'>+O(t^3)
\\
&\,=\,-t<q_1, \gamma'>-\frac{t^2}2\lf(k<q_1, \gamma'>+<\gamma', q_2 >\ri)+O(t^3),
\end{split}
\end{equation*}
we can get
\begin{equation*}
\begin{split}
\frac{{\partial}}{{\partial} \theta}(\sqrt{g}\cdot g^{12})&\,=\,-t<q_1, \gamma'>'-\frac{t^2}2\lf(k<q_1, \gamma'>+<\gamma', q_2>\ri)'+O(t^3),
\\
\frac{{\partial}}{{\partial} t}(\sqrt{g}\cdot g^{12})&\,=\,-<q_1, \gamma'>-t\lf(k<q_1, \gamma'>+<\gamma',q_2>\ri)+O(t^2).
\end{split}
\end{equation*}
So
\begin{equation*}
\begin{split}
\frac{{\partial}}{{\partial} t}(\sqrt{g}\cdot g^{11})+\frac{{\partial}}{{\partial} \theta}(\sqrt{g}\cdot g^{12})
&\,=\,-k+t\lf(<q_1', \gamma'>-<q_1, \gamma'>'\ri)+O(t^2)
\\
&\,=\,-k+O(t^2),
\end{split}
\end{equation*}
where we have used the relations
$$
<q_1', \gamma'>-<q_1, \gamma'>'\,=\,-<q_1, \gamma''>\,=\,0.
$$
We obtain the coefficient of the first bracket in \eqref{laplace0} is
\begin{equation*}
\begin{split}
&(\sqrt{g})^{ -1}\lf(\frac{{\partial}}{{\partial} t}(\sqrt{g}\cdot g^{11})+\frac{{\partial}}{{\partial} \theta}(\sqrt{g}\cdot g^{12})\ri)
\,=\,-k-k^2t+O(t^2).
\end{split}
\end{equation*}

\medskip
On the other hand, by
\begin{equation*}
\begin{split}
\sqrt{g}\cdot g^{22}&\,=\,1-kt+\frac{t^2}{2}<q_1, \gamma'>+2kt-2k^2t^2
+t^2\big(3k^2-<q_1, \gamma'>+|q_1|^2\big)+O(t^3)
\\
&\,=\,1+kt+\frac{t^2}2\big(2k^2-<q_1, \gamma'>+2|q_1|^2\big)+O(t^3),
\end{split}
\end{equation*}we can get
\begin{equation*}
\begin{split}
\frac{{\partial}}{{\partial} \theta}(\sqrt{g}\cdot g^{22})&\,=\,k't+\frac{t^2}{2}\big(2k^2-<q_1, \gamma'>+2|q_1|^2\big)'+O(t^3).
\end{split}
\end{equation*}
This implies that
\begin{equation*}
\begin{split}
&\frac{{\partial}}{{\partial} t}(\sqrt{g}\cdot g^{12})+\frac{{\partial}}{{\partial} \theta}(\sqrt{g}\cdot g^{22})
\,=\,-<q_1, \gamma'>+t(k'-k<q_1, \gamma'>-<q_2,\gamma{\,'}>)+O(t^2),
\end{split}
\end{equation*}
which gives the coefficient of the second bracket in \eqref{laplace0} is
\begin{equation*}
\begin{split}
&(\sqrt{g})^{ -1}\lf(\frac{{\partial}}{{\partial} t}(\sqrt{g}\cdot g^{12})+\frac{{\partial}}{{\partial} \theta}(\sqrt{g}\cdot g^{22})\ri)
%\\
%&\,=\,-<q_1, \gamma'>+t\big(k{\,'}-k<q_1, \gamma'>-<q_2,\gamma{\,'}>\big)-kt<q_1, \gamma'>+O(t^2)
\\
&\,=\,-<q_1, \gamma'>+t\big(k'-2k<q_1, \gamma'>-<q_2,\gamma'>\big)+O(t^2).
\end{split}
\end{equation*}

\medskip
Denote
\begin{align}\label{a0}
 \varpi(\theta)\,\equiv\,<q_1(\theta), \gamma'(\theta)>\,=\,\Theta_{tt}(0, \theta),
\end{align}
where we have used the expression of $q_1$ in Lemma \ref{derivativeofF}.
Hence, the term $\triangle_y u$ in (\ref{original equation-01}) has the following form in the modified Fermi coordinate system
\begin{equation}\label{a1a5}
\begin{split}
\triangle_y u&\,=\,\lf[1+O(t^3)\ri]u_{tt} -2\lf[\,\varpi\,t+\frac{t^2}2(3k\varpi+<q_2,\gamma'>)+O(t^3)\ri]u_{t\theta}
\\
&\quad +\lf[1+2kt+t^2\big(3k^2-<q_1',\gamma'>+|q_1|^2\big)+O(t^3)\ri]u_{\theta\theta}
\\
&\quad +\lf[-k-k^2t+O(t^2)\ri]u_t +\lf[-\varpi+tk'-t\big(2ka_0+<q_2,\gamma'>\big)+O(t^2)\ri]u_\theta
\\
&\,\equiv\,\lf(1+a_2t^3\ri)u_{tt}
\,+\,\big(-2\,\varpi\,t+a_3t^2\big)u_{t\theta}
\,+\,\big(1+2kt+a_1t^2\big)u_{\theta\theta}
\\
&\quad
\,+\,\big(-k-k^2t+a_4t^2\big)u_t
\,+\,\big(-\varpi+a_5t\big)u_\theta.
\end{split}
\end{equation}
The terms in $\triangle_y u$ will be rearranged in the form
\begin{eqnarray}\label{laplaceorigin}
\triangle_y u\,\,=\,\,u_{tt}+u_{\theta\theta}+\bar{B}_1(u)+\bar{B}_0(u),
\end{eqnarray}
where
\begin{equation}\label{B1bar}
\begin{split}
\bar{B}_1(u)&\,=\,-(k+k^2t)u_{t} -2\,\varpi\,tu_{t\theta} -\varpi u_\theta,
\end{split}
\end{equation}
and
\begin{equation}
\begin{split}\label{B0bar}
\bar{B}_0(u)
&\,=\,\,2ktu_{\theta\theta}+a_1t^2u_{\theta\theta}+a_2t^3u_{tt}
+a_3t^2u_{t\theta}+a_4t^2u_t
+a_5tu_\theta.
\end{split}
\end{equation}

\medskip
We finally show the expression of $\nu$, the outward unit normal of $\partial\Omega$ near $P_1, P_2$, i.e., when $\theta=0,1$
in the modified Fermi coordinates. This will provide the local expression of $\partial u/\partial\nu$ in (\ref{original equation-01}).
Suppose
$$
\nu\,=\,\sigma_1\frac{{\partial} F}{{\partial} t}+\sigma_2\frac{{\partial} F}{{\partial} \theta}.
$$
Since ${{\partial} F}/{{\partial} t}\in T({\partial} \Omega)$, we have $<{{\partial} F}/{{\partial} t},\nu>=0$.
Hence
$$
\sigma_2\neq 0,\quad \frac{{\partial} F}{{\partial} \theta}\neq 0,
$$
and
$$\sigma_1g_{11}+\sigma_2g_{12}\,=\,0.$$
On the other hand, $<\nu,\nu>=1$, that is
$$
\lf<\sigma_1\frac{{\partial} F}{{\partial} t}+\sigma_2\frac{{\partial} F}{{\partial} \theta},\, \sigma_1\frac{{\partial} F}{{\partial} t}+\sigma_2\frac{{\partial} F}{{\partial} \theta}\ri>\,=\,1,
$$
which implies that
$$
\sigma_1^2g_{11}+\sigma_2^2g_{22}+2\sigma_1\sigma_2g_{12}\,=\,1.
$$
Combining above two equations, one can get
$$\sigma_1\,=\,\pm\frac{g^{12}}{\sqrt{g^{22}}},
\qquad
\sigma_2\,=\,\pm\sqrt{g^{22}}.$$

\medskip
Recall that
$$
q_1(\theta)\,=\,\gamma{\,'}(\theta)\big[(k_2-k_1)\theta+k_1\big];
$$
$$
q_1'(\theta)\,=\,\gamma{''}(\theta)\big[(k_2-k_1)\theta+k_1\big]\,+\,\gamma{\,'}(\theta)(k_2-k_1);
$$
$$
q_2(\theta)\,=\,\gamma{\,'}(\theta)\Big[\big(\tilde{\varphi}_2'''(0)-\tilde{\varphi}_1'''(0)\big)\theta +\tilde{\varphi}_1'''(0)\Big]
\,-\,3k\gamma{\,'}(\theta)\big[(k_2-k_1)\theta+k_1\big].
$$
By choosing the $+$, it is easy to check that
\begin{equation*}
\begin{split}
\sigma_2  \,=\,&1+kt+\frac12\Big(2k^2-<q_1',\gamma{\,'}>+|q_1|^2\Big)t^2+O(t^3),
\\
\,=\,&\,1+kt+k^2t^2-\frac 12(k_2-k_1)t^2
+\frac 12\big[(k_2-k_1)\theta+k_1\big]^2t^2+O(t^3).
\end{split}
\end{equation*}
Similarly, we get
\begin{equation*}
\begin{split}
\sigma_1\,=\,&\,-\sigma_2\cdot g_{12}\cdot (g_{11})^{-1}
\\
\,=\,&-\Big \{1+kt+k^2t^2-\frac 12(k_2-k_1)t^2
+\frac 12\big[(k_2-k_1)\theta+k_1\big]^2t^2\Big \}
\\
&\quad \times \Big [\,\varpi\,t+\frac 12 (<q_2,\gamma'>-ka_0)\Big]+O(t^3)
\\
\,=\,&\,-\,\big[(k_2-k_1)\theta+k_1\big]t
\,+\,k\big[(k_2-k_1)\theta+k_1\big]t^2
-\frac {1}{2}\lf[(\tilde{\varphi}_2'''(0)-\tilde{\varphi}_1'''(0))\theta+\tilde{\varphi}_1'''(0)\ri]t^2
\,+\,O(t^3).
\end{split}
\end{equation*}

\medskip
In the modified Fermi coordinates $(t, \theta)$ in (\ref{Fermicoordinates}), the normal derivative
$\partial u/\partial\nu$ has a local form as follows
$$
\sigma_1\frac{\partial u}{\partial t} + \sigma_2\frac{\partial u}{\partial\theta}.
$$
More precisely, for $\theta\,=\,0$, it is
\begin{align}
\begin{aligned}\label{boundaryoriginal0}
&k_1tu_t
\,+\,
b_1 t^2u_t
\,-\,u_{\theta}
\,-\,k(0)tu_{\theta}
\,+\,b_2t^2u_{\theta}
\,+\,{\bar D}_0^0(u),
\end{aligned}
\end{align}
where
\begin{align}
{\bar D}_0^0(u)\,=\,\sigma_3(t)\,u_t\,+\,\sigma_4(t)u_\theta,
\end{align} 
and the constants $b_1$ and $b_2$ are given by
\begin{align}\label{b1b2}
b_1\,=\,\frac{1}{2}\tilde{\varphi}_1'''(0)-k(0)k_1,
\qquad
b_2\,=\,\frac{1}{2}(k_2-k_1)-k^2(0)-\frac {1}{2}k_1^2.
\end{align}
On the other hand, for $\theta=1$, it has the form
\begin{align}
\begin{aligned}\label{boundaryoriginal1}
&k_2tu_t
\,+\,
b_3t^2u_t
\,-\,u_{\theta}
\,-\,k(1)tu_{\theta}
\,+\,b_4 t^2u_{\theta}
\,+\,{\bar D}_0^1(u),
\end{aligned}
\end{align}
with the notation
\begin{align}
{\bar D}_0^1(u)\,=\,\sigma_5(t)\,u_t\,+\,\sigma_6(t)u_\theta,
\end{align}
\begin{align}\label{b3b4}
b_3\,=\,\frac{1}{2}\tilde{\varphi}_2'''(0)-k(1)k_2,
\qquad
b_4\,=\,\frac{1}{2}(k_2-k_1)-k^2(1)-\frac {1}{2}k_2^2.
\end{align}
In the above, the functions $\sigma_3, \cdots, \sigma_6$ are smooth functions of $t$ with the properties
$$
|\sigma_i(t)|\leq C|t|^3,\quad i=3, 4, 5, 6.
$$

\end{appendices}

\end{document}